\newtheorem{theorem}{Theorem}[section]
\newtheorem{lemma}[theorem]{Lemma}
\newtheorem{corollary}[theorem]{Corollary}
\newtheorem{proposition}[theorem]{Proposition}
\theoremstyle{definition}
\newtheorem{definition}[theorem]{Definition}
\newtheorem{example}[theorem]{Example}
\theoremstyle{remark}
\newtheorem{remark}[theorem]{Remark}
\numberwithin{equation}{section}
\tikzstyle{block} = [rectangle, draw, text width=5.3em, text centered, rounded corners, minimum height=4em]
\tikzstyle{blocknob} = [text width=4.4em, text centered, minimum height=4em]
\tikzstyle{line} = [draw, -latex']
\def\W {\mathrm{W}}
\def\D {\mathcal{D}}
\def\A {\mathcal{A}}
\def\L {\mathcal{L}}
\begin{document}

\title{Nevanlinna Theory of the Wilson Divided-difference Operator}


\author[K. H. CHENG]{Kam Hang CHENG}
\address{Department of Mathematics, The Hong Kong University of Science and Technology, Clear Water Bay, Kowloon, Hong Kong.}
\email{henry.cheng@family.ust.hk}

\author[Y. M. CHIANG]{Yik-Man CHIANG}
\address{Department of Mathematics, The Hong Kong University of Science and Technology, Clear Water Bay, Kowloon, Hong Kong.}
\email{machiang@ust.hk}
\thanks{Both authors were partially supported by GRF no. 16306315 from Hong Kong Research Grant Council.\ \ The second author was also partially supported by GRF no. 600609.}

\subjclass[2010]{Primary 30D35; Secondary 30D30, 33C45, 39A05}

\date{September 14, 2016.\ \ To appear in Annales Academi\ae\ Scientiarum Fennic\ae\ Mathematica.}

\dedicatory{Dedicated to the fond memories of Rauno Aulaskari}

\keywords{Wilson divided-difference operator, complex function theory, Poisson-Jensen formula, Nevanlinna theory}

\begin{abstract}
Sitting at the top level of the Askey-scheme, Wilson polynomials are regarded as the most general hypergeometric orthogonal polynomials.\ \ Instead of a differential equation, they satisfy a second order Sturm-Liouville type difference equation in terms of the Wilson divided-difference operator.\ \ This suggests that in order to better understand the distinctive properties of Wilson polynomials and related topics, one should use a function theory that is more natural with respect to the Wilson operator.\ \ Inspired by the recent work of Halburd and Korhonen, we establish a full-fledged Nevanlinna theory of the Wilson operator for meromorphic functions of finite order.\ \ In particular, we prove a Wilson analogue of the lemma on logarithmic derivatives, which helps us to derive Wilson operator versions of Nevanlinna's Second Fundamental Theorem, some defect relations and Picard's Theorem.\ \ These allow us to gain new insights on the distributions of zeros and poles of functions related to the Wilson operator, which is different from the classical viewpoint.\ \ We have also obtained a relevant five-value theorem and Clunie type theorem as applications of our theory, as well as a pointwise estimate of the logarithmic Wilson difference, which yields new estimates to the growth of meromorphic solutions to some Wilson difference equations and Wilson interpolation equations.
\end{abstract}

\maketitle
\setcounter{tocdepth}{1}
\tableofcontents

\section{Introduction}

The Wilson divided-difference operator $\D_\W$ was first considered by Wilson to study \textit{Wilson polynomials} $W_n(x;a,b,c,d)$ \cite[p. 34]{Askey-Wilson}, defined by
\begin{align*}
	&\ \ \ \,\frac{W_n(x;\, a,\, b,\, c,\, d)}{(a+b)_n(a+c)_n(a+d)_n} \\
	&:={}_4F_3\left(
	\begin{matrix}
	\begin{matrix}
		-n, & n+a+b+c+d-1, & a+i\sqrt{x}, & a-i\sqrt{x}
	\end{matrix} \\
	\begin{matrix}
		a+b, & a+c, & a+d
	\end{matrix}
	\end{matrix}
	\ ;\ 1\right),
\end{align*}
which are hypergeometric orthogonal polynomials located at the top level of the Askey scheme \cite{Koekoek-Swarttouw}.\ \ They are the most general hypergeometric orthogonal polynomials that contain all the known classical hypergeometric orthogonal polynomials \cite{Andrews-Askey} as special cases.\ \ The Wilson operator acts on Wilson polynomials in a similar manner as the usual differential operator acts on monomials, except with a shift in the parameters $a$, $b$, $c$ and $d$:
\begin{align*}
	(\D_\W W_n)(x;a,b,c,d)=C_n W_{n-1}\Big(x;a+\frac{1}{2},b+\frac{1}{2},c+\frac{1}{2},d+\frac{1}{2}\Big),
\end{align*}
where $C_n=-n(n+a+b+c+d-1)$.\ \ Instead of a differential equation, it is known that Wilson polynomials satisfy a second order Sturm-Liouville type difference equation in terms of the Wilson operator (see \S\ref{sec:Applications}).

Wilson polynomials are intimately related to other classical orthogonal polynomials on which active research have been done.\ \ Wilson identified himself in \cite{Wilson} that an orthogonality relation for the 6-$j$ symbols in the coupling of angular momenta in quantum mechanics follow from that of Wilson polynomials as a special case.\ \ Koornwinder \cite{Koornwinder_1985} discovered that Wilson polynomials and Jacobi polynomials are mapped to each other via the Jacobi-Fourier transform, and studied Wilson polynomials from a group theoretic viewpoint \cite{Koornwinder_1986}.\ \ Groenevelt \cite{Groenevelt_2003} used Wilson functions, which is a family of transcendental solutions of the aforementioned second-order Sturm-Liouville Wilson difference equation linearly independent to Wilson polynomials, as the kernel of a new integral transform called the Wilson function transform.\ \ Wilson polynomials have also been extended to a multiple-parameter version \cite{BCA_2005} using essentially the same tool of Jacobi transform that was used in \cite{Groenevelt_2003}.\ \ Wilson polynomials also have applications in various aspects from theoretical physics to birth and death processes, see for examples \cite{Bender-Meisinger-Wang}, \cite{ILVW_1990}, \cite{KMP_2007}, \cite{Masson_1991} and \cite{Mimachi_1999}.

It is clear that in a majority of the research works cited, the subject of investigation was either Wilson polynomials or Wilson functions.\ \ In this paper we would therefore like to look into the Wilson operator in a broader function theoretic context, and establish some results about its interaction with meromorphic functions.\ \ It turns out that Halburd and Korhonen have recently established a Nevanlinna theory in \cite{Halburd-Korhonen} with respect to the ordinary difference operator $(\Delta f)(x)=f(x+\eta)-f(x)$ for each fixed non-zero $\eta$.\ \ In particular, a new difference type little Picard theorem has been proved.\ \ See also \cite{HK-3}.\ \ This suggests that the function theory with respect to $\Delta$ is somewhat different from the classical function theory with respect to $\frac{d}{dx}$, despite all the functions considered in both cases are meromorphic.\ \ Indeed, N\"orlund spent a large part of his 1926 memoir \cite{Noerlund}, which has not been mentioned by most of the recent researchers on orthogonal polynomials, on investigating how to expand entire functions into a Stirling interpolation series.\ \ Stirling series converge faster than Newton series, and the interpolating polynomials in Stirling series are obtained by slightly modifying the Wilson polynomials.\ \ Thus it is natural to develop a function theory using Wilson polynomials $\{W_n\}$ (or simply $\{\tau_n(\cdot;a)\}$ where $\tau_n(x;a) := \prod_{k=0}^{n-1}{[(a+ki)^2-x]}$) as a basis, instead of the usual $\{(x-a)^n\}$.\ \ In fact, following the classical idea as in \cite{Noerlund}, one can show that every entire function $f$ satisfying the growth condition
\[
	\limsup_{r\to\infty}{\frac{\ln{M(r,f)}}{\sqrt{r}}} < 2\ln 2
\]
admits, for each $a\in\mathbb{C}$, a \textit{Wilson series expansion}
\[
	\sum_{k=0}^\infty{a_k\tau_k(x;a)}
\]
which converges uniformly to $f$ on any compact subset of $\mathbb{C}$.\ \ As a result of this series expansion, the appropriate way of counting zeros (and poles) with respect to $\D_\W$ is different compared to that with respect to $\frac{d}{dx}$.\ \ This is reflected in the way we define the \textit{Wilson counting functions} in \S\ref{sec:Proofs} (see Definition \ref{WilsonCount}).\ \ The second author and Feng have also established a full-fledged Nevanlinna theory with respect to the Askey-Wilson divided-difference operator $\D_q$ earlier in \cite{Chiang-Feng3}.\ \ Parallel to what has been obtained in the MPhil thesis of the first author in 2013, we demonstrate in this paper that the Wilson operator $\D_\W$ has its own version of Nevanlinna theory, and establish a number of key results in this version, including defect relations and a Wilson version of little Picard theorem.\ \ A \textit{Wilson exceptional value} of a meromorphic function is a value in $\hat{\mathbb{C}}$ whose preimage lies on a certain special sequence (see Definition~\ref{ExVWilson}).\ \ A transcendental meromorphic function $f\notin\ker\D_\W$ of finite order can possess at most two Wilson exceptional values, because such values $a$ have defect $\Theta_\W(a,f)=1$ (see Remark~\ref{ExVWilsonRmk}) while our defect relations (Corollary~\ref{Defect}) assert that
\[
	{\sum_{a\in\hat{\mathbb{C}}}\Theta_\W(a,f)\le 2}.
\]
As an example (Example~\ref{defectegs} (i)), the entire function
\[
	f(x):=\prod_{k=0}^{\infty}{\left[1-\frac{x}{(b+ki)^2}\right]}
\]
has simple zeros only at the points
\[
	x_k=(b+ki)^2 \hspace{25px} \mbox{for }k=0,1,2,\ldots,
\]
so by Definition~\ref{ExVWilson}, $0$ is a Wilson exceptional value of $f$ and $\Theta_\W(0,f)=1$, which means that the value 0 is missed with respect to the Wilson operator.\ \ Note that the value $\infty$ is also missed in the classical sense as $f$ is entire, and so we clearly have $\Theta_\W(\infty,f)=1$ as well.\ \ As another example (Example~\ref{defectegs} (ii)), the entire function
\begin{equation*}
	\varphi(x) = \,_2F_1\left(
		\begin{matrix}
		\begin{matrix}
			a+i\sqrt{x}, & \frac12+i\sqrt{x}
		\end{matrix} \\
			a+\frac12
		\end{matrix}
	\ ;\ -1
	\right)\,_2F_1\left(
		\begin{matrix}
		\begin{matrix}
			a-i\sqrt{x}, & \frac12-i\sqrt{x}
		\end{matrix} \\
			a-\frac12
		\end{matrix}
	;\ -1
	\right)
\end{equation*}
which is a special case of the generating function $\varphi(x;t)$ of Wilson polynomials, has double zeros only at the points
\[
	x_k=-(a-1+2k)^2 \hspace{25px} \mbox{for }k=1,2,3,\ldots,
\]
so $0$ is a ``$2i$-shift" Wilson exceptional value of $\varphi$, and $\Theta_{\W,2i}(0,\varphi)=1$.\ \ Since $\varphi$ is entire, we similarly have $\Theta_\W(\infty,f)=1$.\ \ The corresponding residue calculus with respect to $\D_\W$ and the relation between this operator and interpolation theory will be discussed in a subsequent paper.

This paper is organized as follows.\ \ In \S\ref{sec:DW}, we will first give the definition and some basic properties of $\D_\W$.\ \ In the subsequent sections, we will then follow the classical approach on the Nevanlinna theory of the differential operator.\ \ We will state in \S\ref{sec:Mainresults} our main results, which include a Wilson operator version of Nevanlinna's Second Fundamental Theorem and some corollaries about defect relations.\ \ Before proving all these in \S\ref{sec:Proofs}, we will introduce a Wilson operator analogue of the lemma on logarithmic derivative in \S\ref{sec:WilsonLogLemma}.\ \ Some results from Halburd and Korhonen \cite{Halburd-Korhonen} and from the second author and Feng's work on the Nevanlinna theories of the ordinary difference operator \cite{Chiang-Feng1} and of the Askey-Wilson operator \cite{Chiang-Feng3} will be useful to our proofs.\ \ In \S\ref{sec:Ptwise}, we will give a \textit{pointwise} estimate for the logarithmic Wilson difference, as opposed to the \textit{overall} estimate given in \S\ref{sec:WilsonLogLemma}.\ \ These estimates will be applied to give estimates on the growth of meromorphic solutions to some specific types of Wilson difference equations and interpolation equations in \S\ref{sec:Applications} and \S\ref{sec:Ptwise}.

In this paper, we adopt the following notations:
\begin{enumerate}[(i)]
	\item $\mathbb{N}$ denotes the set of all natural numbers \textit{excluding} $0$, and $\mathbb{N}_0:=\mathbb{N}\cup\{0\}$.
	\item For every positive real number $r$ and every complex number $a$, $D(a;r)$ denotes the open disk of radius $r$ centered at $a$ in the complex plane.
	\item For every positive real number $r$, $\ln{r}$ denotes the natural logarithm of $r$, while $\ln^+{r}$ denotes the non-negative number $\max{\{\ln{r},0\}}$.
	\item A \textit{complex function} always means a function in \textit{one} complex variable, and a \textit{meromorphic function} always means a meromorphic function from $\mathbb{C}$ to $\hat{\mathbb{C}}:=\mathbb{C}\cup\{\infty\}$, unless otherwise specified.
	\item A summation notation of the form $\displaystyle\sum_{|a_\nu|<r}$ denotes a sum running over all the $\nu$'s such that the term $a_\nu$ of the sequence $\{a_\nu\}_\nu$ has modulus smaller than $r$.
	\item For any two functions $f,g:[0,\infty)\to\mathbb{R}$, we write
	\begin{itemize}
		\item $g(r)=O(f(r))$ as $r\to\infty$ if and only if there exist $C>0$ and $M>0$ such that $|g(r)|\le C|f(r)|$ whenever $r>M$;
		\item $g(r)=o(f(r))$ as $r\to\infty$ if and only if for every $C>0$, there exists $M>0$ such that $|g(r)|\le C|f(r)|$ whenever $r>M$;
		\item $f(r)\sim g(r)$ as $r\to\infty$ if and only if for every $\varepsilon>0$, there exists $M>0$ such that $\left|\frac{f(r)}{g(r)}-1\right|<\varepsilon$ whenever $r>M$.
	\end{itemize}
\end{enumerate}

\section{The Wilson Operator}
\label{sec:DW}

In this section, we give the definition of the Wilson operator and a few of its properties.

\begin{definition}
	Let $\sqrt{\cdot}$ be a branch of the complex square-root with the imaginary axis as the branch cut.\ \ For each $x\in\mathbb{C}$ we denote
	\[
		x^+ := \left(\sqrt{x}+\frac{i}{2}\right)^2 \hspace{30px} \mbox{and} \hspace{30px} x^- := \left(\sqrt{x}-\frac{i}{2}\right)^2.
	\]
	We also adopt the notations $x^{\pm(0)}:=x$, $x^{\pm(m)}:=(x^{\pm(m-1)})^\pm$ and $x^{\pm(-m)}:=x^{\mp(m)}$ for every positive integer $m$.\ \ Then we define the \textbf{\textit{Wilson operator}} $\D_\W$, which acts on all complex functions, as follows:
	\begin{equation}
		\label{A1} (\D_\W f)(x) := \frac{f(x^+)-f(x^-)}{x^+ - x^-} = \frac{f((\sqrt{x}+\frac{i}{2})^2)-f((\sqrt{x}-\frac{i}{2})^2)}{2i\sqrt{x}}.
	\end{equation}
	To simplify notations, we also define the \textbf{\textit{Wilson averaging operator}} $\A_\W$ by
	\begin{equation}
		\label{A2} (\A_\W f)(x) := \frac{f(x^+)+f(x^-)}{2} = \frac{f((\sqrt{x}+\frac{i}{2})^2) + f((\sqrt{x}-\frac{i}{2})^2)}{2}.
	\end{equation}
\end{definition}

Although there are two choices of $\sqrt{x}$ for each $x\ne 0$, $\D_\W$ and $\A_\W$ are independent of the choice of $\sqrt{\cdot}$ and are thus always well-defined.\ \ Moreover, the value of $\D_\W f$ at $0$ should be defined as
	\[
		(\D_\W f)(0) := \lim_{x\to 0}(\D_\W f)(x) = f'\left(-\frac{1}{4}\right),
	\]
in case $f$ is differentiable at $-\frac{1}{4}$.\ \ We sometimes write $z:=\sqrt{x}$.

We consider branches of square-root with the imaginary axis as the branch cut because of the shift of $\frac{i}{2}$ in the definition of $\D_\W$.\ \ One can consider Wilson operators of other shifts $\frac{c}{2}$ where $c\in\mathbb{C}\setminus\{0\}$, and accordingly choose branches with the line joining $c$ and $0$ as the branch cut, without affecting the later results. \\

$\D_\W$ is a linear operator, whose kernel contains precisely all those functions of the form $g\circ\sqrt{\cdot}$ such that $g$ is even, meromorphic and periodic with period $i$, where $\sqrt\cdot$ denotes a particular branch of the square-root function.\ \ Equivalently, this means that
\begin{align*}
	\ker{\D_\W}= \left\{g\circ\sqrt{\cdot}:
		\begin{matrix}
		g\in\mathcal{M}(\mathbb{C}),\mbox{ $g\left(\frac{ni}{2}+z\right)=g\left(\frac{ni}{2}-z\right)$ for all $z\in\mathbb{C}$ and $n\in\mathbb{Z}$,}\\
		\mbox{and $\sqrt{\cdot}$ is a branch of the square-root function}
		\end{matrix}
	\right\}.
\end{align*}
Examples of elements in this kernel include the functions $\cosh({2\pi\sqrt{x}})$ and $\wp(\omega i\sqrt{x})$, where $\wp$ is the Weierstrass' P-function and $\omega$ is one of its periods.\ \ It also worths noting that the evenness and meromorphicity of $g$ ensures that $g\circ\sqrt{\cdot}$ is also meromorphic. \\

The Wilson operator clearly has the following product rule and quotient rule.
\begin{proposition}
\textup{(Wilson product and quotient rules)}
\label{PQRule}
	For every pair of complex functions $f_0$ and $f_1$, we have
	\[
		(\D_\W(f_0f_1))(x) = (\A_\W f_0)(x)(\D_\W f_1)(x) + (\A_\W f_1)(x)(\D_\W f_0)(x).
	\]
	If we assume, in addition, that $f_0\not\equiv 0$, then we also have
	\[
		\left(\D_\W\left(\frac{f_1}{f_0}\right)\right)(x) = \frac{(\A_\W f_0)(x) (\D_\W f_1)(x) - (\A_\W f_1)(x) (\D_\W f_0)(x)}{f_0(x^+)f_0(x^-)}.
	\]
\end{proposition}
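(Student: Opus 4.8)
\emph{Proof proposal.} The plan is to reduce both identities to an elementary algebraic decomposition of a difference of products, and then to divide by $x^+-x^-$. Throughout, abbreviate $u:=x^+$ and $v:=x^-$, so that for any complex function $f$ and any $x$ at which the relevant values are defined one has $(\D_\W f)(x)=\dfrac{f(u)-f(v)}{u-v}$ and $(\A_\W f)(x)=\dfrac{f(u)+f(v)}{2}$.

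For the product rule, the key step is the symmetric telescoping identity
\[
	f_0(u)f_1(u)-f_0(v)f_1(v)=\frac{f_0(u)+f_0(v)}{2}\bigl(f_1(u)-f_1(v)\bigr)+\frac{f_1(u)+f_1(v)}{2}\bigl(f_0(u)-f_0(v)\bigr),
\]
which is checked immediately by expanding the right-hand side: the two ``cross terms'' $f_0(u)f_1(v)$ and $f_0(v)f_1(u)$ cancel, leaving exactly $f_0(u)f_1(u)-f_0(v)f_1(v)$. Dividing both sides by $u-v=x^+-x^-$ and recognizing the resulting three quotients as $(\D_\W(f_0f_1))(x)$, $(\D_\W f_1)(x)$ and $(\D_\W f_0)(x)$, with the averages $(\A_\W f_0)(x)$ and $(\A_\W f_1)(x)$ appearing as the remaining factors, gives the stated formula.

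For the quotient rule, assume $f_0\not\equiv 0$, so that $f_1/f_0$ is meromorphic, and work at points $x$ with $f_0(u)f_0(v)\ne 0$; the excluded points form a discrete set and cause no difficulty. One writes
\[
	\frac{f_1(u)}{f_0(u)}-\frac{f_1(v)}{f_0(v)}=\frac{f_1(u)f_0(v)-f_1(v)f_0(u)}{f_0(u)f_0(v)},
\]
and the numerator admits the analogous symmetric decomposition
\[
	f_1(u)f_0(v)-f_1(v)f_0(u)=\frac{f_0(u)+f_0(v)}{2}\bigl(f_1(u)-f_1(v)\bigr)-\frac{f_1(u)+f_1(v)}{2}\bigl(f_0(u)-f_0(v)\bigr),
\]
again verified by direct expansion (now the cross terms combine to give the correct numerator with the minus sign). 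Dividing by $(u-v)\,f_0(u)f_0(v)$ and using $f_0(u)=f_0(x^+)$, $f_0(v)=f_0(x^-)$ yields the claimed identity. Alternatively one could obtain the quotient rule by applying the product rule to $f_1=f_0\cdot(f_1/f_0)$ and solving for $(\D_\W(f_1/f_0))(x)$, but this route additionally requires computing $(\A_\W(f_1/f_0))(x)$ and is no shorter.

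Since the proposition is essentially a computation, there is no serious obstacle. The only points requiring a word of care are: (a) choosing the \emph{symmetric} telescoping (rather than one of the two unsymmetric ones, which would produce bare factors $f_0(u)$ or $f_1(v)$ instead of the averaging operator); and (b) the exceptional points, namely $x=0$, handled by the limiting definition $(\D_\W f)(0)=f'\!\left(-\tfrac14\right)$, and, in the quotient rule, the zeros of $x\mapsto f_0(x^{\pm})$, which form a discrete set across which both sides of the identity extend by meromorphy.
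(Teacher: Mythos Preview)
Your proof is correct. The paper does not actually supply a proof of this proposition at all: it simply introduces it with the sentence ``The Wilson operator clearly has the following product rule and quotient rule'' and moves on. Your symmetric telescoping computation is exactly the natural verification the authors had in mind, and your remarks on the exceptional points ($x=0$ and the zeros of $f_0(x^\pm)$) are in fact more careful than anything the paper says here.
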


\bigskip
We can easily check from the definition of the Wilson operator $\D_\W$ that it sends polynomials to polynomials.\ \ In fact, we have the following.
\begin{proposition}
\label{Mero}
	Let $f$ be a complex function.\ \ Then
	\begin{enumerate}[(i)]
	\item if $f$ is entire, then $\D_\W f$ and $\A_\W f$ are also entire;
	\item if $f$ is meromorphic, then $\D_\W f$ is also meromorphic; and
	\item if $f$ is rational, then $\D_\W f$ is also rational.
	\end{enumerate}
\end{proposition}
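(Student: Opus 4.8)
The plan is to transfer everything to the auxiliary variable $z=\sqrt x$, in which the Wilson operator becomes an ordinary divided difference of shifts and the branch cut plays no role. Writing $g(z):=f(z^2)$ and using $x^{\pm}=(z\pm\tfrac{i}{2})^2$ together with $x^{+}-x^{-}=2iz$, the definitions \eqref{A1} and \eqref{A2} become
\[
	(\D_\W f)(x)=\frac{g(z+\tfrac{i}{2})-g(z-\tfrac{i}{2})}{2iz}=:H(z),\qquad (\A_\W f)(x)=\frac{g(z+\tfrac{i}{2})+g(z-\tfrac{i}{2})}{2}=:\widetilde H(z),
\]
where $z=\sqrt x$. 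The structural point is that $g$ is even, $g(-z)=f(z^2)=g(z)$, and a one-line computation then shows that $H$ and $\widetilde H$ are even functions of $z$ as well. This evenness is exactly what allows $H$ and $\widetilde H$ to be viewed as single-valued functions of $x=z^2$ (and re-proves that $\D_\W f$, $\A_\W f$ do not depend on the branch of $\sqrt\cdot$); the whole proof of (i) and (ii) is then the observation that an even entire (respectively meromorphic) function of $z$ descends to an entire (respectively meromorphic) function of $x$.

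For (i): if $f$ is entire then $g$ is entire, so the numerator $g(z+\tfrac{i}{2})-g(z-\tfrac{i}{2})$ of $H$ is entire; evenness of $g$ gives $g(\tfrac{i}{2})=g(-\tfrac{i}{2})=f(-\tfrac14)$, so this numerator vanishes at $z=0$ and $H$ extends holomorphically across $z=0$. Thus $H$ and $\widetilde H$ are even entire functions; writing an even entire function as $\sum_k a_{2k}z^{2k}$, it equals $h(z^2)$ for the entire function $h(w):=\sum_k a_{2k}w^k$, which exhibits $\D_\W f$ and $\A_\W f$ as entire functions of $x$. For (ii): if $f$ is meromorphic then $g(z)=f(z^2)$ is meromorphic (and $\not\equiv\infty$) as a composition of a meromorphic function with an entire one, so $H$ is meromorphic in $z$, even, and has no essential singularity at $z=0$; passing to Laurent series and using evenness to kill the odd powers about $z=0$, the function $h(w):=H(\sqrt w)$ is well-defined and meromorphic on all of $\mathbb{C}$ — away from $0$ because $z\mapsto z^2$ is locally biholomorphic, and at $0$ because the even Laurent series of $H$ in $z$ is a Laurent series in $w=z^2$. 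Hence $\D_\W f=h$ is meromorphic.

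For (iii): I would use Proposition~\ref{PQRule} rather than the above. First, $\D_\W$ and $\A_\W$ carry polynomials to polynomials: since $x^{+}+x^{-}=2x-\tfrac12$ and $x^{+}x^{-}=(x+\tfrac14)^2$ lie in $\mathbb{C}[x]$, any expression polynomial and symmetric in $x^{+},x^{-}$ is polynomial in $x$; this covers $\A_\W P=\tfrac12\bigl(P(x^{+})+P(x^{-})\bigr)$ and, via $a^n-b^n=(a-b)\sum_{j=0}^{n-1}a^{j}b^{n-1-j}$, the monomial case $\D_\W(x^n)=\sum_{j=0}^{n-1}(x^{+})^{j}(x^{-})^{n-1-j}$, and linearity of $\D_\W$ does the rest. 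Writing a rational $f$ as $P/Q$ with $P,Q\in\mathbb{C}[x]$, $Q\not\equiv 0$, the quotient rule gives
\[
	(\D_\W f)(x)=\frac{(\A_\W Q)(x)\,(\D_\W P)(x)-(\A_\W P)(x)\,(\D_\W Q)(x)}{Q(x^{+})\,Q(x^{-})},
\]
whose numerator is a polynomial and whose denominator $Q(x^{+})Q(x^{-})$ is symmetric in $x^{+},x^{-}$, hence a polynomial in $x$ (and not identically zero). Therefore $\D_\W f$ is rational.

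The only step that goes beyond a direct computation is the descent from $z$ to $x=z^2$ behind (i) and (ii): checking that an even entire (respectively meromorphic) function of $z$ genuinely comes from an entire (respectively meromorphic) function of $x$. All the delicacy lives at the single branch point $z=0\leftrightarrow x=0$ and is resolved by the power-/Laurent-series bookkeeping indicated above; away from that point the relevant maps are locally biholomorphic and there is nothing to verify.
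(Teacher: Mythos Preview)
Your proof is correct and takes a genuinely different route from the paper's for parts (i) and (ii). The paper argues by explicit Maclaurin-series computation: for entire $f(x)=\sum_k a_k x^k$ it expands $(\D_\W f)(x)$ term by term, rearranges into a series $\sum_l b_l x^l$, and bounds $|b_l|^{1/l}$ to conclude entirety; for meromorphic $f=f_1/f_0$ it then invokes the quotient rule and checks directly via a binomial expansion that $f_0(x^+)f_0(x^-)$ is entire. Your approach instead passes to the even auxiliary function $g(z)=f(z^2)$ and observes once and for all that even entire (resp.\ meromorphic) functions of $z$ descend to entire (resp.\ meromorphic) functions of $x=z^2$. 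This is more conceptual and avoids the coefficient bookkeeping entirely; it also makes transparent why the branch cut is harmless. For part (iii), the paper simply says to repeat the argument of (ii) with polynomials, while your use of the elementary symmetric functions $x^{+}+x^{-}=2x-\tfrac12$ and $x^{+}x^{-}=(x+\tfrac14)^2$ to show $\D_\W$ and $\A_\W$ preserve $\mathbb{C}[x]$ and that $Q(x^+)Q(x^-)\in\mathbb{C}[x]$ is a clean and self-contained alternative (in fact the paper's verification that $f_0(x^+)f_0(x^-)$ is entire in (ii) is essentially this same symmetry observation carried out by hand).
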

\begin{proof}
	We first prove (i).\ \ If $f$ is entire, then it has a Maclaurin series expansion $\displaystyle f(x)=\sum_{k=0}^\infty{a_k x^k}$ satisfying $\displaystyle\limsup_{k\to\infty}|a_k|^{\frac{1}{k}}=0$.\ \ Now
	\begin{align*}
		(\D_\W f)(x) &= \frac{f(x^+)-f(x^-)}{2iz} = \frac{1}{2iz}\left[\sum_{k=0}^\infty{a_k\left(z+\frac{i}{2}\right)^{2k}} - \sum_{k=0}^\infty{a_k\left(z-\frac{i}{2}\right)^{2k}}\right] \\
		&= \frac{1}{2iz}\sum_{k=1}^\infty{\left[2a_k\sum_{l=0}^{k-1}{\binom{2k}{2l+1}}z^{2l+1}\left(\frac{i}{2}\right)^{2k-2l-1}\right]} \\
		&= \sum_{l=0}^\infty{\left[\sum_{k=l+1}^\infty{\binom{2k}{2l+1}\frac{(-1)^{k-l-1}a_k}{2^{2k-2l-1}}}\right]x^l} = \sum_{l=0}^\infty{b_l x^l}
	\end{align*}
	is a Maclaurin series expansion where $\displaystyle b_l:=\sum_{k=l+1}^\infty{\binom{2k}{2l+1}\frac{(-1)^{k-l-1}a_k}{2^{2k-2l-1}}}$ satisfies
	\[
		|b_l|^{\frac{1}{l}} \le 4\cdot\left(\sum_{k=l+1}^\infty{\binom{2k}{2l+1}\frac{|a_k|}{4^k}}\right)^{\frac{1}{l}} \le 4\cdot\left(\sum_{k=l+1}^\infty{|a_k|}\right)^{\frac{1}{l}},
	\]
	so $\displaystyle\limsup_{l\to\infty}|b_l|^{\frac{1}{l}}=0$. This means that the Maclaurin series expansion of $\D_\W f$ has infinite radius of convergence, so $\D_\W f$ is entire.\ \ By a similar computation, one can easily show that $\A_\W f$ is also entire. \\

	Next we prove (ii).\ \ If $f$ is meromorphic, then we can write $f=\frac{f_1}{f_0}$, where $f_0$ and $f_1$ are entire functions without common zeros and $f_0\not\equiv 0$.\ \ According to the proven statement (i) and the quotient rule in Proposition~\ref{PQRule}, it suffices to check that the product $f_0(x^+)f_0(x^-)$ is entire whenever $f_0(x)$ is entire.

	Now if $f_0$ is entire, then it has a Maclaurin series expansion $\displaystyle f_0(x)=\sum_{k=0}^{\infty}{c_k x^k}$ satisfying $\displaystyle\limsup_{k\to\infty}|c_k|^{\frac{1}{k}}=0$.\ \ So we have
	\begin{align*}
		f_0(x^+)f_0(x^-) = \sum_{k=0}^{\infty}{\sum_{j=0}^{\infty}{c_k c_j\left(z+\frac{i}{2}\right)^{2k}\left(z-\frac{i}{2}\right)^{2j}}}.
	\end{align*}
	To put this series into a Maclaurin series in $x$, it suffices to show that each symmetric term $(z+\frac{i}{2})^{2k}(z-\frac{i}{2})^{2j} + (z+\frac{i}{2})^{2j}(z-\frac{i}{2})^{2k}$ is a polynomial in $x$.\ \ Without loss of generality, we may assume that $j\ge k$ and write $n:=j-k\ge 0$.\ \ Then binomial expansion gives
	\begin{align*}
		&\ \ \ \,\left(z+\frac{i}{2}\right)^{2k}\left(z-\frac{i}{2}\right)^{2j} + \left(z+\frac{i}{2}\right)^{2j}\left(z-\frac{i}{2}\right)^{2k} \\
		&= \left(z+\frac{i}{2}\right)^{2k}\left(z-\frac{i}{2}\right)^{2k} \left[\left(z+\frac{i}{2}\right)^{2n} + \left(z-\frac{i}{2}\right)^{2n}\right]\\
		&= 2\left(x+\frac{1}{4}\right)^{2k} \sum_{l=0}^{n}{\frac{(-1)^{n-l}}{2^{2n-2l}}\binom{2n}{2l} x^{l}},
	\end{align*}
	which is a polynomial in $x$.\ \ The Maclaurin series of $f_0(x^+)f_0(x^-)$ thus obtained has infinite radius of convergence by a similar argument as that in (i). \\

	Finally, to prove (iii), it just suffices to repeat the proof of (ii) verbatim, with the holomorphic functions $f_0$ and $f_1$ replaced by polynomials, and the infinite Maclaurin series expansion of $f_0$ replaced by a finite sum.
\end{proof}

\section{Main Results}
\label{sec:Mainresults}
We first recall some basic notions in the classical Nevanlinna theory, which can be found in many texts, for instance \cite{Cherry-Ye}, \cite{Hayman} and \cite{Yang}.\ \ Given a meromorphic function $f\not\equiv0$ with sequence of poles $\{b_\mu\}_\mu$ repeated according to multiplicity, the proximity function $m(r,f)$, the integrated counting function $N(r,f)$ and the Nevanlinna characteristic function $T(r,f)$ for $f$ are defined, for every $r>0$, as
\begin{align*}
	m(r,f) &:= \frac{1}{2\pi}\int_{0}^{2\pi}{\ln^+ \left|f(re^{i\theta})\right| d\theta}, \\
	N(r,f) &:= \int_{0}^{r}{\frac{n(t,f)-n(0,f)}{t}\,dt} + n(0,f) \ln r, \\
	T(r,f) &:= m(r,f) + N(r,f)
\end{align*}
respectively, where $\displaystyle n(r,f) := \sum_{|b_\mu|\le r}{1}$ for every $r\ge 0$.\ \ Nevanlinna's First Fundamental Theorem then states that for every meromorphic function $f$ and every complex number $a$, we have
\begin{align}
	\label{B1} m\left(r,\frac{1}{f-a}\right) + N\left(r,\frac{1}{f-a}\right) \equiv T\left(r,\frac{1}{f-a}\right) = T(r,f)+O(1)
\end{align}
as $r\to\infty$. \\

The order of a meromorphic function $f$ is defined by
\[
	\sigma:=\limsup_{r\to\infty}{\frac{\ln^+ T(r,f)}{\ln r}},
\]
which is either a non-negative real number or $+\infty$.\ \ The following is a fundamental inequality leading to our main result.
\begin{theorem}
\label{FundIneq}
\textup{(The Fundamental inequality)}
	Let $f$ be a meromorphic function of finite order $\sigma$ such that $f\notin\ker\D_\W$, $q$ be a positive integer, and $y_1, y_2, \ldots, y_q$ be $q$ complex numbers.\ \ Then for every $\varepsilon>0$, we have
	\begin{align}
		\label{C1} m(r,f) + \sum_{n=1}^q{m\left(r,\frac{1}{f-y_n}\right)} \le 2T(r,f) - N_\W(r) + O(r^{\sigma-\frac{1}{2}+\varepsilon}) + O(1)
	\end{align}
	as $r\to\infty$, where the \textbf{Wilson ramification term} $N_\W(r)$ is defined as
	\begin{align}
		\label{C2} N_\W(r) := N\left(r,\frac{1}{\D_\W f}\right) + 2N(r,f) - N(r,\D_\W f).
	\end{align}
\end{theorem}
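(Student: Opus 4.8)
The plan is to follow the classical derivation of Nevanlinna's Second Fundamental Theorem, but with the ordinary logarithmic derivative $\frac{f'}{f}$ replaced throughout by the \emph{logarithmic Wilson difference} $\frac{\D_\W f}{f}$, whose growth is controlled by the Wilson analogue of the lemma on logarithmic derivatives promised in \S\ref{sec:WilsonLogLemma}. Concretely, I would begin from the auxiliary function
\begin{equation*}
	F(x) := \sum_{n=1}^q \frac{1}{f(x)-y_n},
\end{equation*}
and estimate $m(r,F)$ from below. Since the $y_n$ are distinct, near any point where $f$ takes one of the values $y_n$ the corresponding summand dominates and the others stay bounded, so a standard argument (splitting $[0,2\pi]$ into the arcs where $|f-y_n|$ is small and its complement) gives
\begin{equation*}
	\sum_{n=1}^q m\Bigl(r,\tfrac{1}{f-y_n}\Bigr) \le m(r,F) + O(1).
\end{equation*}

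Next I would write $F = \dfrac{\D_\W f}{f} \cdot \dfrac{F}{\D_\W f/f} = \dfrac{\D_\W f}{f}\cdot \dfrac{1}{\D_\W f}\cdot f F$, or more usefully split
\begin{equation*}
	m(r,F) \le m\Bigl(r,\frac{\D_\W f}{f}\Bigr) + m\Bigl(r, \frac{f F}{\D_\W f}\Bigr) + O(1),
\end{equation*}
and then handle $\dfrac{fF}{\D_\W f} = \dfrac{1}{\D_\W f}\sum_n \dfrac{f}{f-y_n}$ by writing $\dfrac{f}{f-y_n} = 1 + \dfrac{y_n}{f-y_n}$ and expressing each $\dfrac{1}{f-y_n}\cdot\dfrac{\D_\W f}{\D_\W f}$; the point is that $\D_\W f = \D_\W(f-y_n)$, so $\dfrac{\D_\W f}{f-y_n} = \dfrac{\D_\W (f-y_n)}{f-y_n}$ is again a logarithmic Wilson difference, controlled by the Wilson log-difference lemma applied to $f-y_n$ (which has the same characteristic as $f$ up to $O(1)$ by \eqref{B1}). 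This produces the total error term $O(r^{\sigma-\frac12+\varepsilon})$, exactly the shape of the error in the Wilson log-difference lemma for finite-order functions. Adding back $m(r,f)$ and using the First Fundamental Theorem $m(r,1/(f-y_n)) = T(r,f) - N(r,1/(f-y_n)) + O(1)$ converts proximity functions into counting functions.

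At this stage one has, schematically,
\begin{equation*}
	m(r,f) + \sum_{n=1}^q m\Bigl(r,\tfrac{1}{f-y_n}\Bigr) \le (q+1)T(r,f) - \sum_{n=1}^q N\Bigl(r,\tfrac{1}{f-y_n}\Bigr) + m\Bigl(r,\tfrac{\D_\W f}{f}\Bigr) + O(r^{\sigma-\frac12+\varepsilon}) + O(1),
\end{equation*}
and the remaining task is the bookkeeping that collapses the right-hand side to $2T(r,f) - N_\W(r)$. Following the classical pattern, I would introduce $N_1(r) := N(r,1/\D_\W f) + 2N(r,f) - N(r,\D_\W f)$ — this is precisely $N_\W(r)$ — and show
\begin{equation*}
	\sum_{n=1}^q N\Bigl(r,\tfrac{1}{f-y_n}\Bigr) + N_\W(r) \ge (q-1)T(r,f) - \Bigl[\,m\bigl(r,\tfrac{\D_\W f}{f}\bigr) + \textstyle\sum_n m\bigl(r,\tfrac{\D_\W f}{f-y_n}\bigr)\,\Bigr] + \cdots,
\end{equation*}
by a local analysis at each pole of $f$ and each $y_n$-point: a zero of $f-y_n$ of multiplicity $p$ (lying in "general position" relative to the Wilson lattice, so that $\D_\W f$ does not vanish there to higher order) contributes at least $p-1$ to $N(r,1/\D_\W f)$ via the Wilson product/quotient rule (Proposition~\ref{PQRule}), while poles of $f$ are counted with the correct weight through the $2N(r,f) - N(r,\D_\W f)$ combination, since a pole of $f$ of order $p$ typically gives a pole of $\D_\W f$ of order at most $p$ at the shifted points. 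The genuinely delicate point — and the step I expect to be the main obstacle — is exactly this local counting: unlike the differential case, $\D_\W f$ is evaluated at the shifted arguments $x^\pm$ rather than at $x$ itself, so the zeros and poles of $\D_\W f$ sit at Wilson-shifted locations, and one must verify that the discrepancy between $n(r,\cdot)$ at $x$ and at $x^\pm$ is absorbed into an $O(r^{\sigma-\frac12+\varepsilon})$ term (this is where the "$\sqrt r$" scale and the finite-order hypothesis enter), and that the definition of $N_\W(r)$ in \eqref{C2} is the correct combination making the ramification inequality hold. Once that lattice-shift estimate is in hand, combining the two displayed inequalities and cancelling $q\,T(r,f)$ from both sides yields \eqref{C1} with the stated error.
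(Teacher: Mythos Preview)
Your opening moves are sound: introducing $F=\sum_n\frac{1}{f-y_n}$, the bound $\sum_n m(r,\frac{1}{f-y_n})\le m(r,F)+O(1)$, and the observation that $\D_\W f=\D_\W(f-y_n)$ so that each $\frac{\D_\W f}{f-y_n}$ is controlled by Theorem~\ref{WilsonLogLemma} are exactly the right ingredients. The paper does the same thing in the equivalent $P(f)=\prod_n(f-y_n)$ formulation.

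The genuine gap is the final third of your argument. Your ``schematic'' inequality
\[
m(r,f)+\sum_n m\Bigl(r,\tfrac{1}{f-y_n}\Bigr)\le (q+1)T(r,f)-\sum_n N\Bigl(r,\tfrac{1}{f-y_n}\Bigr)+m\Bigl(r,\tfrac{\D_\W f}{f}\Bigr)+O(\cdots)
\]
is just the First Fundamental Theorem applied term by term plus $m(r,f)\le T(r,f)$; it does not use the Wilson log-difference lemma in any essential way, and from it you cannot reach $2T(r,f)-N_\W(r)$ by the local zero/pole counting you propose. In fact the local claim you invoke---that a zero of $f-y_n$ of multiplicity $p$ contributes at least $p-1$ to $N(r,\frac{1}{\D_\W f})$---is \emph{false} for the Wilson operator: $(\D_\W f)(x)$ depends on $f(x^+)$ and $f(x^-)$, not on $f(x)$, so a high-order zero of $f-y_n$ at $x$ forces no vanishing of $\D_\W f$ anywhere. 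That kind of shifted-lattice bookkeeping does appear later in the paper (Lemma~\ref{Estimate4} and the passage to Theorem~\ref{NSFTWilson}), but in a different form and for a different purpose; it is not needed for Theorem~\ref{FundIneq} at all.

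The correct route is shorter than what you attempt. From $F=\frac{1}{\D_\W f}\sum_n\frac{\D_\W f}{f-y_n}$ (or equivalently the paper's $m(r,\frac{1}{P(f)})\le m(r,\frac{1}{\D_\W f})+O(r^{\sigma-\frac12+\varepsilon})$) one gets directly
\[
\sum_n m\Bigl(r,\tfrac{1}{f-y_n}\Bigr)\le m\Bigl(r,\tfrac{1}{\D_\W f}\Bigr)+O(r^{\sigma-\frac12+\varepsilon}).
\]
Now apply \eqref{B1} to $\D_\W f$ to write $m(r,\frac{1}{\D_\W f})=T(r,\D_\W f)-N(r,\frac{1}{\D_\W f})+O(1)$, expand $T(r,\D_\W f)=m(r,\D_\W f)+N(r,\D_\W f)$, and bound $m(r,\D_\W f)\le m(r,f)+m(r,\frac{\D_\W f}{f})=m(r,f)+O(r^{\sigma-\frac12+\varepsilon})$ by one more use of Theorem~\ref{WilsonLogLemma}. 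Adding $m(r,f)$ and writing $2m(r,f)=2T(r,f)-2N(r,f)$ then yields \eqref{C1} immediately, with $N_\W(r)$ appearing as the combination $N(r,\frac{1}{\D_\W f})+2N(r,f)-N(r,\D_\W f)$ by pure algebra---no pointwise analysis of zeros or poles is required.
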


To better interpret the Wilson ramification term $N_\W(r)$ in Theorem~\ref{FundIneq}, we need to introduce new counting functions $n_\W(r,f)$, $N_\W(r,f)$, $\widetilde{n_\W}(r,f)$ and $\widetilde{N_\W}(r,f)$, which are more suitable for the Wilson operator.\ \ While the precise definitions of these Wilson counting functions shall be given in \S\ref{sec:Proofs} (Definition~\ref{WilsonCount}), our main result in this paper is the following Wilson operator version of Nevanlinna's Second Fundamental Theorem, which is stated in terms of these new counting functions.
\begin{theorem}
\label{NSFTWilson}
\textup{(NSFT for the Wilson operator)}
	Let $f$ be a meromorphic function of finite order $\sigma$ such that $f\notin\ker\D_\W$, $q$ be a positive integer, and $y_1, y_2, \ldots, y_q$ be $q$ complex numbers.\ \ Then for every $\varepsilon>0$, we have
	\begin{align}
		\label{C12} (q-1)T(r,f) \le \widetilde{N_\W}(r,f) + \sum_{n=1}^{q}{\widetilde{N_\W}\left(r,\frac{1}{f-y_n}\right)} + O(r^{\sigma-\frac{1}{2}+\varepsilon}) + O(\ln r)
	\end{align}
	as $r\to\infty$.
\end{theorem}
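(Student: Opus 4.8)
The plan is to deduce Theorem~\ref{NSFTWilson} directly from the Fundamental inequality \eqref{C1}, using Nevanlinna's First Fundamental Theorem to trade proximity functions for characteristic and counting functions, and then carrying out a local (pointwise) comparison between the ordinary counting functions of $f$, $f-y_n$ and $\D_\W f$ and the Wilson counting functions of Definition~\ref{WilsonCount}.

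First I would rewrite the left side of \eqref{C1}: by \eqref{B1} we have $m\big(r,\tfrac1{f-y_n}\big)=T(r,f)-N\big(r,\tfrac1{f-y_n}\big)+O(1)$ for each $n$, and trivially $m(r,f)=T(r,f)-N(r,f)$. Substituting into \eqref{C1} and moving $q$ copies of $T(r,f)$ to the right, one obtains
\begin{equation*}
	(q-1)\,T(r,f)\le N(r,f)+\sum_{n=1}^{q}N\Big(r,\tfrac1{f-y_n}\Big)-N_\W(r)+O(r^{\sigma-\frac12+\varepsilon})+O(1).
\end{equation*}
Expanding the Wilson ramification term via \eqref{C2}, the explicit counting part of the right side simplifies to
\begin{equation*}
	\sum_{n=1}^{q}N\Big(r,\tfrac1{f-y_n}\Big)-N(r,f)-N\Big(r,\tfrac1{\D_\W f}\Big)+N(r,\D_\W f),
\end{equation*}
so everything reduces to showing that this quantity is at most $\widetilde{N_\W}(r,f)+\sum_{n=1}^{q}\widetilde{N_\W}\big(r,\tfrac1{f-y_n}\big)+O(\ln r)$.

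The heart of the argument, and the step I expect to be the main obstacle, is a local analysis along the Wilson shift-chains $\{x_0^{\pm(m)}:m\in\mathbb Z\}$. This must replace the elementary fact used in the classical proof — that a zero of $f-y$ of order $m$ is a zero of $f'$ of order $m-1$, and a pole of $f$ of order $p$ is a pole of $f'$ of order $p+1$ — which no longer applies verbatim, since $(\D_\W f)(x_0)$ depends on the values of $f$ at the \emph{distinct} points $x_0^+$ and $x_0^-$ rather than on the germ of $f$ at $x_0$. Instead, one examines a maximal run of consecutive chain-points at which $f$ equals a fixed $y_n$ (respectively, at which $f$ has a pole): from the definition of $\D_\W$ one reads off exactly how many zeros (respectively poles) $\D_\W f$ acquires on and around that run, together with their orders. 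The functions $\widetilde{N_\W}$ of Definition~\ref{WilsonCount} are designed precisely so that, after summing over all runs and integrating in $t\in(0,r)$, the discrepancy between $N\big(r,\tfrac1{f-y_n}\big)$ and the portion of $N\big(r,\tfrac1{\D_\W f}\big)$ coming from those runs equals $\widetilde{N_\W}\big(r,\tfrac1{f-y_n}\big)$, and similarly $N(r,\D_\W f)-N(r,f)$ equals $\widetilde{N_\W}(r,f)$. Zeros of $\D_\W f$ not attached to any of the $q$ runs only decrease the left side (they carry a minus sign), so they can be discarded; after assuming, without loss of generality, that the $y_n$ are distinct, this yields the desired inequality.

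Finally I would account for the error terms. A shift $x\mapsto x^\pm$ is a translation by $\pm\tfrac i2$ in the variable $z=\sqrt x$, hence displaces a point of modulus $\approx r$ by $O(\sqrt r)$ in the $x$-plane, so $D(0;r)$ and its preimages under the shifts differ only inside an annulus of width $O(\sqrt r)$ about $|x|=r$; since $f$ has finite order, the contribution of zeros and poles of $f$ lying in such an annulus, once integrated, is $O(r^{\sigma-\frac12+\varepsilon})$ and is absorbed into the existing error term, while the residual $O(\ln r)$ comes from the $n(0,\cdot)\ln r$ normalisation in the definition of $N$ and from the degeneracy of the chain structure near $x=0$. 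Combining the three steps gives \eqref{C12}.
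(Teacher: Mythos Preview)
Your first and last steps match the paper exactly: applying \eqref{B1} to \eqref{C1} gives
\[
(q-1)T(r,f)\le N(r,f)+\sum_{n=1}^{q}N\Big(r,\tfrac1{f-y_n}\Big)-N_\W(r)+O(r^{\sigma-\frac12+\varepsilon})+O(1),
\]
and the shift discrepancy you describe at the end is precisely Lemma~\ref{Estimate3}.

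The middle step, however, is where your outline diverges from the paper and where the claims as stated are not quite right. You assert two \emph{equalities}: that $N(r,\D_\W f)-N(r,f)$ equals $\widetilde{N_\W}(r,f)$, and that $N\big(r,\tfrac1{f-y_n}\big)$ minus the ``attributed'' zeros of $\D_\W f$ equals $\widetilde{N_\W}\big(r,\tfrac1{f-y_n}\big)$. Neither is an equality in general. For the $y_n$-points, Definition~\ref{WilsonCount} gives $n_\W+\widetilde{n_\W}=\max\{z,m\}\ge m$ at each point (with $m$ the $y_n$-multiplicity and $z$ the zero order of $\D_\W f$ at $x^+$), so only $N-N_\W\le\widetilde{N_\W}$ holds; equality fails whenever extra cancellation makes $z>m$. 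For the poles the situation is worse: $\widetilde{N_\W}(r,f)$ is defined through zeros of $\D_\W(\tfrac1f)$ at $x^+$, not through poles of $\D_\W f$, and your proposed run-by-run bookkeeping does not by itself relate the two. The paper handles this via the identity
\[
\Big(\D_\W\tfrac1f\Big)(x^+)=-\frac{(\D_\W f)(x^+)}{f(x)\,f(x^{++})},
\]
which turns zeros of $\D_\W(\tfrac1f)$ into a combination of zeros of $\D_\W f$ and poles of $f$; this is the step your outline is missing.

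Rather than aiming for your two separate equalities, the paper introduces the intermediary counting function $N_\W$ (Definition~\ref{WilsonCount}(i),(iii)) and proves two inequalities (Lemma~\ref{Estimate4}): the trivial $N-N_\W\le\widetilde{N_\W}$, and then $N_\W(r,f)+\sum_a N_\W\big(r,\tfrac1{f-a}\big)\le N_\W(r)+O(r^{\sigma-\frac12+\varepsilon})+O(\ln r)$, the latter obtained from the quotient identity above together with Lemma~\ref{Estimate3}. Chaining these gives exactly the bound on $N(r,f)+\sum_n N\big(r,\tfrac1{f-y_n}\big)-N_\W(r)$ that you need. So your plan is the right shape, but the ``equals'' should be ``$\le$'', and the pole part needs the $\D_\W(\tfrac1f)$ identity rather than a direct comparison of $N(r,\D_\W f)$ with $N(r,f)$.
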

Note that while the classical version of Nevanlinna's Second Fundamental Theorem imposes no restriction on the order of the meromorphic function $f$, our Wilson version works essentially just for meromorphic functions whose order is finite.\ \ In Halburd and Korhonen's ordinary difference operator version \cite{Halburd-Korhonen}, the meromorphic function is also required to have finite order; and in the second author and Feng's Askey-Wilson operator version \cite{Chiang-Feng3}, the meromorphic function is required to have finite logarithmic order. \\

After we have introduced in \S\ref{sec:Proofs} (Definition~\ref{WTheta}) the Wilson analogues $\vartheta_\W(a,f)$ and $\Theta_\W(a,f)$ of the ramification index $\vartheta(a,f)$ of $f$ at $a$ and the quantity $\Theta(a,f)$ in the classical Nevanlinna theory, the following corollary is an immediate consequence of Theorem~\ref{NSFTWilson}.\ \ This is also analogous to Halburd and Korhonen's defect relation \cite[Corollary 2.6]{Halburd-Korhonen} with respect to the ordinary difference operator.
\begin{corollary}
\label{Defect}
\textup{(Wilson deficient values)}
	Let $f$ be a transcendental meromorphic function of finite order such that $f\notin\ker\D_\W$.\ \ Then $\Theta_\W(a,f)=0$ except for at most countably many $a\in\hat{\mathbb{C}}$, and
	\[
		\sum_{a\in\hat{\mathbb{C}}}{[\delta(a,f)+\vartheta_\W(a,f)]} \le \sum_{a\in\hat{\mathbb{C}}}{\Theta_\W(a,f)} \le 2.
	\]
\end{corollary}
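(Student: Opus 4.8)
The plan is to extract Corollary~\ref{Defect} from Theorem~\ref{NSFTWilson} in exactly the way the classical defect relation is extracted from Nevanlinna's Second Fundamental Theorem, using the (natural) definitions $\Theta_\W(a,f)=1-\limsup_{r}\widetilde{N_\W}(r,\tfrac1{f-a})/T(r,f)$ (with the obvious modification at $a=\infty$), $\vartheta_\W(a,f)=\liminf_{r}[N_\W(r,\tfrac1{f-a})-\widetilde{N_\W}(r,\tfrac1{f-a})]/T(r,f)$ from Definition~\ref{WTheta}, and the classical deficiency $\delta(a,f)=1-\limsup_{r}N(r,\tfrac1{f-a})/T(r,f)$. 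The one point needing care is the error term $O(r^{\sigma-\frac12+\varepsilon})$, which, unlike the classical $S(r,f)$, need not be $o(T(r,f))$ for \emph{every} large $r$. First I would reduce to finite target sets: it suffices to prove $\sum_{a\in S}\Theta_\W(a,f)\le 2$ for each finite $S\subseteq\hat{\mathbb C}$. Granting this, $\sum_{a\in\hat{\mathbb C}}\Theta_\W(a,f)$ is the supremum of such finite sums, hence $\le 2$; and since for each $n\in\mathbb N$ the set $\{a:\Theta_\W(a,f)>1/n\}$ can then contain at most $2n$ points, the set $\{a:\Theta_\W(a,f)>0\}$ is a countable union of finite sets, hence countable.

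To prove the finite-set bound I may assume $\infty\in S$, since $\Theta_\W(\infty,f)\in[0,1]$ (from $0\le\widetilde{N_\W}(r,f)\le N(r,f)\le T(r,f)+O(1)$), so adjoining $\infty$ only increases the sum; write $S=\{\infty,y_1,\dots,y_q\}$ with the $y_n$ distinct and finite, and apply Theorem~\ref{NSFTWilson} with a fixed small $\varepsilon$. Because $f$ is transcendental, $T(r,f)/\ln r\to\infty$, so the $O(\ln r)$ term is $o(T(r,f))$. For the $O(r^{\sigma-\frac12+\varepsilon})$ term I would not divide and take $\limsup$ over all $r$; instead, fixing $\delta>0$ with $\delta+\varepsilon<\frac12$, the definition of order $\sigma$ furnishes a sequence $r_k\to\infty$ with $T(r_k,f)\ge r_k^{\sigma-\delta}$, along which $r_k^{\sigma-\frac12+\varepsilon}/T(r_k,f)\le r_k^{\delta+\varepsilon-\frac12}\to 0$. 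Dividing the inequality of Theorem~\ref{NSFTWilson} by $T(r_k,f)$, letting $k\to\infty$, and using $\limsup_k(A_{r_k}+B_{r_k})\le\limsup_r A_r+\limsup_r B_r$ together with the definitions of $\Theta_\W(y_n,f)$ and $\Theta_\W(\infty,f)$, I obtain $q-1\le\sum_{a\in S}\bigl(1-\Theta_\W(a,f)\bigr)=(q+1)-\sum_{a\in S}\Theta_\W(a,f)$, i.e. $\sum_{a\in S}\Theta_\W(a,f)\le 2$. This handling of the error term against the (possibly irregular) growth of $T(r,f)$ is, I expect, the only real obstacle; everything else is formal.

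It remains to sharpen the left-hand side of Corollary~\ref{Defect} from $\Theta_\W$ to $\delta+\vartheta_\W$, which I would do value-by-value. From the definitions of the Wilson counting functions, $\widetilde{N_\W}(r,g)\le N_\W(r,g)\le N(r,g)$ for every meromorphic $g$: a Wilson orbit of zeros is represented at most once (and with reduced multiplicity in the tilde version), so it never overcounts the ordinary zero count. Taking $g=\frac1{f-a}$ and using the First Fundamental Theorem \eqref{B1} in the form $m(r,g)=T(r,f)-N(r,g)+O(1)$ gives $m(r,g)+\bigl(N_\W(r,g)-\widetilde{N_\W}(r,g)\bigr)\le T(r,f)-\widetilde{N_\W}(r,g)+O(1)$. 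Dividing by $T(r,f)$, taking $\liminf$, and using $\liminf A+\liminf B\le\liminf(A+B)$, the left-hand side is bounded below by $\delta(a,f)+\vartheta_\W(a,f)$ and the right-hand side tends to $1-\limsup_r\widetilde{N_\W}(r,g)/T(r,f)=\Theta_\W(a,f)$; summing over $a\in\hat{\mathbb C}$ finishes the proof. (If $\vartheta_\W$ is instead normalised by $N-\widetilde{N_\W}$ rather than $N_\W-\widetilde{N_\W}$, the same computation works with $N$ in place of $N_\W$.)
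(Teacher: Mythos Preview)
Your argument for the bound $\sum_{a}\Theta_\W(a,f)\le 2$ and the countability is correct and is exactly what the paper does: divide \eqref{C12} by $T(r,f)$, rearrange, and pass to the limit. Your care with the error term $O(r^{\sigma-\frac12+\varepsilon})$ via a sequence $r_k$ with $T(r_k,f)\ge r_k^{\sigma-\delta}$ is a welcome explicitness; the paper simply writes ``taking limit inferior'' and leaves this step implicit, but the content is the same.

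The gap is in your treatment of the first inequality $\sum_a[\delta(a,f)+\vartheta_\W(a,f)]\le\sum_a\Theta_\W(a,f)$. Two things go wrong. First, your guessed definition $\vartheta_\W(a,f)=\liminf_r[N_\W-\widetilde{N_\W}]/T$ is not the paper's; Definition~\ref{WTheta} sets $\vartheta_\W(a,f)=\liminf_r N_\W(r,a)/T(r,f)$. Second, and more seriously, your chain $\widetilde{N_\W}\le N_\W\le N$ is not justified and is false in general. From Definition~\ref{WilsonCount}, at an $a$-point $x$ of multiplicity $p$ the contribution to $n_\W$ is the order $q$ of the zero of $\D_\W f$ at $x^+$, and $q$ is essentially $\min(p,p')$ (or larger, in the presence of cancellation) where $p'$ is the $a$-multiplicity at $x^{++}$; there is no a~priori bound $q\le p$, so $N_\W\le N$ can fail, and likewise $\widetilde{n_\W}=\max\{0,p-q\}$ need not be $\le q$. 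Your ``Wilson orbit'' heuristic does not match these definitions. The paper does not use your chain at all; it invokes Lemma~\ref{Estimate4}(i), namely $N(r,a)-N_\W(r,a)\le\widetilde{N_\W}(r,a)$, and says the first inequality follows from this. So for the first inequality you should abandon the unproved chain and instead work directly from Lemma~\ref{Estimate4}(i) together with the paper's definition of $\vartheta_\W$.
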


A Wilson generalization of Picard's Theorem can be obtained from the defect relation given in Corollary~\ref{Defect}.\ \ It gives a sufficient condition for a transcendental meromorphic function of finite order to become a function in $\ker\D_\W$.

\begin{theorem}
\label{WilsonPicard}
\textup{(Picard's Theorem for the Wilson operator)}
	Let $f$ be a meromorphic function of finite order.\ \ If $f$ has three distinct Wilson exceptional values, then either $f\in\ker\D_\W$ or $f$ is rational.
\end{theorem}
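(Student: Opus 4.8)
The plan is to derive Theorem~\ref{WilsonPicard} directly from the Wilson defect relation in Corollary~\ref{Defect}, in complete analogy with the way the classical little Picard theorem is deduced from Nevanlinna's defect relation. First I would reduce to the only nontrivial case. A meromorphic function is either rational or transcendental; if $f$ is rational the conclusion already holds, and if $f\in\ker\D_\W$ it also holds. So it suffices to show that a \emph{transcendental} meromorphic function $f$ of finite order with $f\notin\ker\D_\W$ \emph{cannot} possess three distinct Wilson exceptional values.

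Assume, for contradiction, that $f$ is such a function and that $a_1,a_2,a_3\in\hat{\mathbb{C}}$ are three distinct Wilson exceptional values of $f$. The key input is that every Wilson exceptional value $a$ satisfies $\Theta_\W(a,f)=1$; this is Remark~\ref{ExVWilsonRmk} together with Definition~\ref{ExVWilson}, and it is the place where the specific structure of the ``special sequence'' in the definition of a Wilson exceptional value enters, ensuring that the relevant Wilson counting function $\widetilde{N_\W}\bigl(r,\tfrac{1}{f-a}\bigr)$ is negligible against $T(r,f)$. Since $\Theta_\W(a,f)\ge 0$ for all $a\in\hat{\mathbb{C}}$, I would then bound the total sum from below by the three terms coming from $a_1,a_2,a_3$:
\[
	\sum_{a\in\hat{\mathbb{C}}}\Theta_\W(a,f)\ \ge\ \Theta_\W(a_1,f)+\Theta_\W(a_2,f)+\Theta_\W(a_3,f)\ =\ 3 .
\]

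On the other hand, because $f$ is transcendental of finite order and $f\notin\ker\D_\W$, the hypotheses of Corollary~\ref{Defect} are met, and that corollary gives $\sum_{a\in\hat{\mathbb{C}}}\Theta_\W(a,f)\le 2$. This contradiction shows that no such $f$ exists, so any meromorphic function of finite order having three distinct Wilson exceptional values is either rational or lies in $\ker\D_\W$, which is exactly the assertion of Theorem~\ref{WilsonPicard}.

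I do not expect a serious obstacle in this argument: once Corollary~\ref{Defect} and the evaluation $\Theta_\W(a,f)=1$ for Wilson exceptional values are in hand, the proof is essentially a one-line counting argument. The only point requiring a little care is to verify that the three hypotheses of Corollary~\ref{Defect} — finite order, transcendence, and $f\notin\ker\D_\W$ — are precisely complementary to the cases already subsumed by the conclusion, so that the contradiction obtained is genuine rather than vacuous; the substantive work has already been carried out in establishing Theorem~\ref{NSFTWilson} and the defect relation it yields.
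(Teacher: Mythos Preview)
Your proof is correct and is essentially the same as the paper's own argument: reduce to the transcendental case with $f\notin\ker\D_\W$, invoke Remark~\ref{ExVWilsonRmk} to get $\Theta_\W(a_j,f)=1$ for each of the three Wilson exceptional values, and then contradict the bound $\sum_{a}\Theta_\W(a,f)\le 2$ from Corollary~\ref{Defect}.
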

The notion of Wilson exceptional values will be defined precisely in \S\ref{sec:Proofs} (Definition~\ref{ExVWilson}).

\section{Lemma on Logarithmic Wilson Difference}
\label{sec:WilsonLogLemma}

The classical Nevanlinna theory starts from the \textit{Poisson-Jensen formula}, which states a relationship between the modulus of a function and the distribution of its poles and zeros.\ \ From this formula, we can obtain the lemma on logarithmic derivative, which is about the growth of $m\big(r,\frac{f'}{f}\big)$, i.e. the effect of the differential operator on the proximity function.

Now we aim to analyze the effect of the Wilson operator on the proximity function, so we start from the Poisson-Jensen Formula and develop a Wilson operator analogue of the lemma on logarithmic derivative.\ \ The following is our desired result, which will be a crucial step to the proof of our main results mentioned in \S\ref{sec:Mainresults}. This estimate can be compared with that for the usual difference operator $\Delta$ and meromorphic functions of finite order obtained by the second author and Feng \cite{Chiang-Feng1} and independently by Halburd and Korhonen \cite{HK-1}, as well as that for the Askey-Wilson operator $\D_q$ and meromorphic functions of finite logarithmic order also obtained by the second author and Feng \cite{Chiang-Feng3}.

\begin{theorem}
\label{WilsonLogLemma}
\textup{(Lemma on logarithmic Wilson difference)}
	If $f\not\equiv0$ is a meromorphic function of finite order $\sigma$, then for every $\varepsilon>0$,
	\[
		m\left(r,\frac{\D_\W f}{f}\right) = O(r^{\sigma-\frac{1}{2}+\varepsilon})
	\]
	as $r\to\infty$.
\end{theorem}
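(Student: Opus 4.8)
The plan is to deduce the estimate from the corresponding one for the ordinary shift operator by means of the substitution $z=\sqrt{x}$. Put $g(z):=f(z^{2})$; this is a meromorphic function on all of $\mathbb{C}$, whose definition involves no branch of the square root. For any branch $\sqrt{\cdot}$ we have $x^{\pm}=\bigl(\sqrt{x}\pm\tfrac{i}{2}\bigr)^{2}$, hence $f(x^{\pm})=g\bigl(\sqrt{x}\pm\tfrac{i}{2}\bigr)$ and $f(x)=g(\sqrt{x})$, so that
\[
	\frac{(\D_\W f)(x)}{f(x)}=\frac{1}{2i\sqrt{x}}\left(\frac{g\bigl(\sqrt{x}+\tfrac{i}{2}\bigr)}{g(\sqrt{x})}-\frac{g\bigl(\sqrt{x}-\tfrac{i}{2}\bigr)}{g(\sqrt{x})}\right).
\]
Thus I expect $m\bigl(r,\tfrac{\D_\W f}{f}\bigr)$ to be controlled, up to bounded factors, by the logarithmic--shift proximity functions $m\bigl(\sqrt{r},\tfrac{g(z+i/2)}{g(z)}\bigr)$ and $m\bigl(\sqrt{r},\tfrac{g(z-i/2)}{g(z)}\bigr)$. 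If $\D_\W f\equiv 0$ the left-hand side vanishes identically and there is nothing to prove, so we may assume $f\notin\ker\D_\W$.

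\emph{Step 1: transferring the proximity function.} Since $\D_\W f$ does not depend on the choice of branch, I would compute $m\bigl(r,\tfrac{\D_\W f}{f}\bigr)$ by integrating over $\{|x|=r\}$ using the fixed branch with the imaginary axis as cut (the two intersection points with that axis have measure zero), and then substitute $x=z^{2}$. Under this substitution $\{|x|=r\}$ is carried onto an arc of $\{|z|=\sqrt{r}\}$ traversed with bounded multiplicity, and $d(\arg x)=2\,d(\arg z)$. Combining this with the elementary inequalities $\ln^{+}|u-v|\le\ln^{+}|u|+\ln^{+}|v|+\ln 2$ and $\ln^{+}\tfrac{1}{2\sqrt{r}}=0$ for $r\ge\tfrac14$, one arrives at a bound of the shape
\[
	m\left(r,\frac{\D_\W f}{f}\right)\le C\left[\,m\left(\sqrt{r},\frac{g(z+i/2)}{g(z)}\right)+m\left(\sqrt{r},\frac{g(z-i/2)}{g(z)}\right)\right]+O(1)
\]
as $r\to\infty$, with $C$ an absolute constant.

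\emph{Step 2: order of $g$, and conclusion.} A standard change of variables in the definitions gives $m(r,g)=m(r^{2},f)$ and $N(r,g)=N(r^{2},f)$ (each pole of $f$ of modulus at most $r^{2}$ contributes, with the right multiplicity, to the poles of $g$ of modulus at most $r$), whence $T(r,g)=T(r^{2},f)$; in particular $g$ is meromorphic of finite order $2\sigma$. Now the finite-order estimate for the logarithmic difference of Chiang and Feng \cite{Chiang-Feng1} (see also Halburd and Korhonen \cite{HK-1}), applied to $g$ with shift $\pm\tfrac{i}{2}$, gives $m\bigl(\rho,\tfrac{g(z\pm i/2)}{g(z)}\bigr)=O(\rho^{\,2\sigma-1+\varepsilon})$ as $\rho\to\infty$ for every $\varepsilon>0$. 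Setting $\rho=\sqrt{r}$ in Step 1 yields $m\bigl(r,\tfrac{\D_\W f}{f}\bigr)=O(r^{\,\sigma-\frac12+\varepsilon/2})$, and renaming $\varepsilon$ completes the proof.

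\emph{Main obstacle.} The delicate point is the bookkeeping in Step 1: one must keep track of the branch cut to see precisely how $\{|x|=r\}$ is transported to $\{|z|=\sqrt{r}\}$, with what multiplicity and Jacobian, and check that the singular pre-factor $1/(2i\sqrt{x})$ is harmless on $\{|x|=r\}$ for large $r$. A self-contained alternative is to argue directly from the Poisson--Jensen formula for $f$ on a disk $|x|\le s$ with $s\asymp r$, estimating the Poisson-kernel difference and the Blaschke-factor differences at $x$ and at $x^{\pm}$ in the spirit of \cite{Chiang-Feng1}; there the exponent $\sigma-\tfrac12$ (rather than $\sigma-1$) arises precisely because $|x^{\pm}-x|=\bigl|\,i\sqrt{x}-\tfrac14\,\bigr|=O(\sqrt{r})$ on $\{|x|=r\}$, and the technical core then becomes the control, after integration over $\{|x|=r\}$, of the sums $\sum\ln^{+}\tfrac{1}{|x^{\pm}-a_{\mu}|}$ taken over the zeros and poles $a_{\mu}$ of $f$.
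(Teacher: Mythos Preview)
Your approach is correct and takes a genuinely different route from the paper's. The paper argues directly from the Poisson--Jensen formula for $f$: it writes $\ln|f(x^+)/f(x)|$ as a difference of Poisson integrals plus Blaschke terms, bounds each piece pointwise (Lemma~\ref{Estimate1}), and then integrates over $|x|=r$; the technical heart is a bespoke estimate (Lemma~\ref{Estimate2}(ii)) for $\frac{1}{2\pi}\int_0^{2\pi}\bigl|(\sqrt{r}\,e^{i\theta}+\tfrac{i}{2})^2-w\bigr|^{-\alpha}\,d\theta$, which is what delivers the sharp exponent $\sigma-\tfrac12$ with no exceptional set. Your route sidesteps this entirely by the substitution $g(z)=f(z^2)$, reducing the Wilson shift to an ordinary additive shift and invoking \cite{Chiang-Feng1} as a black box. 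The arithmetic works out exactly because the order doubles ($\sigma(g)=2\sigma$) while the radius is square-rooted, so $\rho^{\,2\sigma-1+\varepsilon}\big|_{\rho=\sqrt r}=r^{\,\sigma-1/2+\varepsilon/2}$; and since the Chiang--Feng estimate already holds for all $r$ without exceptional set, so does yours. Your worry about the bookkeeping in Step~1 dissolves once you note that $g$ is even, hence $G_\pm(-z)=G_\mp(z)$ and the sum $\ln^+|G_+|+\ln^+|G_-|$ is branch-independent; the map $z\mapsto z^2$ being $2$-to-$1$ from $\{|z|=\sqrt r\}$ onto $\{|x|=r\}$ with $d(\arg x)=2\,d(\arg z)$ then gives $m\bigl(r,\D_\W f/f\bigr)\le m(\sqrt r,G_+)+m(\sqrt r,G_-)+O(1)$ on the nose (so $C=1$). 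What the paper's longer route buys is self-containment and reusable intermediate estimates---in particular the inequality \eqref{B12}, valid for arbitrary meromorphic $f$, and the integral bound of Lemma~\ref{Estimate2}(ii), both of which feed into the pointwise analysis of \S\ref{sec:Ptwise}; your reduction is shorter but imports the Chiang--Feng machinery wholesale.
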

For the simple example $f(x):=e^x$ we have $\sigma=1$ and $m\left(r,\frac{\D_\W f}{f}\right) \sim \frac{2}{\pi}\sqrt r$ as $r\to\infty$, which shows that the number $\frac12$ on the right-hand side of Theorem~\ref{WilsonLogLemma} is the best possible. \\

We need the following lemmas in the course of the proof of Theorem~\ref{WilsonLogLemma}.

\begin{lemma}
\label{Estimate1}
	Suppose that $R>\frac{1}{4}$ and $0\le r<(\sqrt{R}-\frac{1}{2})^2$.\ \ Then the followings hold for every $z\in\partial D(0;\sqrt{r})$:
	\begin{enumerate}[(i)]
		\item For every $\phi\in[0,2\pi]$,
		\[
			\mathfrak{R}\left[\frac{Re^{i\phi}(2iz-\frac{1}{2})}{(Re^{i\phi}-z^2)(Re^{i\phi}-(z+\frac{i}{2})^2)}\right] \le \frac{R(2\sqrt{r}+\frac{1}{2})}{(R-r)(R-(\sqrt{r}+\frac{1}{2})^2)}.
		\]
		\item For every $w\in D(0;r)$,
		\[
			\left|\ln\left|\frac{R^2-\overline{w}(z+\frac{i}{2})^2}{R^2-\overline{w}z^2}\right|\right| \le \frac{2\sqrt{r}+\frac{1}{2}}{R-(\sqrt{r}+\frac{1}{2})^2}.
		\]
		\item For each $0<\alpha\le1$, there exists a constant $C_\alpha>0$ such that
			\begin{align*}
				\left|\ln\left|\frac{(z+\frac{i}{2})^2-w}{z^2-w}\right|\right| \le C_\alpha\left(\sqrt{r}+\frac{1}{4}\right)^\alpha\left[\frac{1}{|z^2-w|^\alpha} + \frac{1}{|(z+\frac{i}{2})^2-w|^\alpha}\right]
			\end{align*}
		for every $w\in D(0;r)$, and in particular we can take $C_1=1$.
	\end{enumerate}
\end{lemma}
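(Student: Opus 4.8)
The three estimates are of the same flavor: each concerns the quantity $(z+\tfrac i2)^2 - z^2 = 2iz - \tfrac14$ (the numerator of the shift $x^+ - x$ after the substitution $x = z^2$), together with a denominator that stays bounded away from zero on the prescribed region. So the plan is to set up uniform bounds on these building blocks first, then assemble each part. Write $z = \sqrt r\, e^{i\psi}$ on $\partial D(0;\sqrt r)$, so $|z| = \sqrt r$ and $|z + \tfrac i2| \le \sqrt r + \tfrac12$, hence $|(z+\tfrac i2)^2| \le (\sqrt r + \tfrac12)^2$ and $|x^+ - x| = |2iz - \tfrac14| \le 2\sqrt r + \tfrac12$ (after accounting for the $\tfrac14$ via the triangle inequality and the hypothesis $R > \tfrac14$, $r \ge 0$; one checks the constant $2\sqrt r + \tfrac12$ absorbs it, or one simply notes $|2iz - \tfrac12|$ as written in the statement already is $\le 2\sqrt r + \tfrac12$). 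For the denominators, the condition $r < (\sqrt R - \tfrac12)^2$ is exactly what guarantees $\sqrt r + \tfrac12 < \sqrt R$, i.e. $(\sqrt r + \tfrac12)^2 < R$, so both $|Re^{i\phi} - z^2| \ge R - r$ and $|Re^{i\phi} - (z+\tfrac i2)^2| \ge R - (\sqrt r + \tfrac12)^2 > 0$; similarly $|R^2 - \overline w z^2| \ge R^2 - r\cdot r$ and $|R^2 - \overline w (z+\tfrac i2)^2| \ge R^2 - r(\sqrt r + \tfrac12)^2$ for $w \in D(0;r)$, which one rewrites as $\ge R\big(R - (\sqrt r+\tfrac12)^2\big)$ after using $r < R$ coarsely. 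These are the only facts needed.

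For part (i), take absolute values in the real part: $\mathfrak R[\zeta] \le |\zeta|$, and bound the numerator $|Re^{i\phi}||2iz - \tfrac12| \le R(2\sqrt r + \tfrac12)$ and the denominator from below by $(R-r)\big(R - (\sqrt r + \tfrac12)^2\big)$ using the two estimates just noted. For part (ii), write $\dfrac{R^2 - \overline w (z+\tfrac i2)^2}{R^2 - \overline w z^2} = 1 + \dfrac{\overline w\big(z^2 - (z+\tfrac i2)^2\big)}{R^2 - \overline w z^2} = 1 + \dfrac{-\overline w(2iz - \tfrac14)}{R^2 - \overline w z^2}$, observe that the correction term has modulus at most $\dfrac{r(2\sqrt r + \tfrac12)}{R(R-(\sqrt r+\tfrac12)^2)} \le \dfrac{2\sqrt r + \tfrac12}{R - (\sqrt r + \tfrac12)^2}$ (using $r \le R$), and then apply the elementary inequality $\big|\ln|1+u|\big| \le |u|$ valid once $|u|$ is, say, $\le \tfrac12$ — here one may need to invoke the hypothesis more carefully, or use the sharper $\big|\ln|1+u|\big| \le \frac{|u|}{1-|u|}$, but since the target bound is exactly $|u|$'s upper bound, the cleanest route is to first reduce to the regime where $|u| \le \tfrac12$ and handle the remaining bounded range of $r$ by adjusting; I expect the intended argument is simply $\big|\ln|1+u|\big|\le|u|$ on the relevant range after the coarse estimate $r \le R$ forces $|u| < 1$, and then a second use of $r<(\sqrt R - \tfrac12)^2$.

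Part (iii) is the one I expect to be the main obstacle, because the denominators $|z^2 - w|$ and $|(z+\tfrac i2)^2 - w|$ are \emph{not} bounded away from zero — $w$ ranges over all of $D(0;r)$ and can collide with $z^2$ — so one cannot simply bound $\ln$ of the ratio by a constant. The idea is to use $\big|\ln|a/b|\big| = \big|\ln|a| - \ln|b|\big|$ and the inequality $\big|\ln t\big| \le \frac{1}{\alpha}\max(t^\alpha, t^{-\alpha}) \le \frac1\alpha (t^\alpha + t^{-\alpha})$ for $t>0$, applied to $t = \dfrac{|(z+\tfrac i2)^2 - w|}{|z^2 - w|}$. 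Writing $A = |z^2 - w|$, $B = |(z+\tfrac i2)^2 - w|$, we have $|A - B| \le |(z+\tfrac i2)^2 - z^2| = |2iz - \tfrac14| \le 2\sqrt r + \tfrac12 \le 4(\sqrt r + \tfrac14)$, so $\big|\ln(B/A)\big| \le \frac{1}{\alpha}\big[(B/A)^\alpha + (A/B)^\alpha\big]$ needs to be converted into the claimed form. The trick: from $|B - A| \le 4(\sqrt r + \tfrac14)$ one gets $B \le A + 4(\sqrt r + \tfrac14)$ and $A \le B + 4(\sqrt r + \tfrac14)$, hence $B^\alpha \le (A + 4(\sqrt r+\tfrac14))^\alpha \le A^\alpha + (4(\sqrt r + \tfrac14))^\alpha$ by subadditivity of $t \mapsto t^\alpha$ for $0 < \alpha \le 1$, giving $(B/A)^\alpha \le 1 + \dfrac{(4(\sqrt r + \tfrac14))^\alpha}{A^\alpha}$ and symmetrically for $(A/B)^\alpha$. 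Plugging in and using $\big|\ln|a/b|\big| \le \big|\ln(B/A)\big|$ again — actually one should just directly estimate $\big|\ln A - \ln B\big|$: if $A \le B$ then $0 \le \ln B - \ln A = \ln(B/A) = \ln(1 + (B-A)/A) \le (B-A)/A \le 4(\sqrt r + \tfrac14)/A$, but we want an $\alpha$-power, so instead use $\ln(1+s) \le \frac{1}{\alpha} s^\alpha$ for $s \ge 0$ (which follows since $\ln(1+s) \le s \le \frac1\alpha s^\alpha$ when $s \le 1$, and $\ln(1+s) \le \ln(2s) \le$ ... for $s \ge 1$ one needs the power form — the clean universal statement is $\ln(1+s) \le \frac{s^\alpha}{\alpha}$ for all $s \ge 0$, $0 < \alpha \le 1$, provable by calculus). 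This yields $\big|\ln A - \ln B\big| \le \frac{1}{\alpha}\Big(\frac{4(\sqrt r + \tfrac14)}{\min(A,B)}\Big)^\alpha \le \frac{4^\alpha}{\alpha}(\sqrt r + \tfrac14)^\alpha\big(A^{-\alpha} + B^{-\alpha}\big)$, so $C_\alpha = \frac{4^\alpha}{\alpha}$ works, and for $\alpha = 1$ the bound $\ln(1+s) \le s$ gives the sharper $C_1 = 1$ directly (no factor $4$, since then $|B - A| \le 2\sqrt r + \tfrac12 \le \sqrt r + \tfrac14$ is false — one must be careful: for $C_1 = 1$ use $|B-A| \le |2iz - \tfrac14|$ and that this is $\le$ something comparable to $\sqrt r + \tfrac14$; in fact $|2iz - \tfrac14| \le 2|z| + \tfrac14 = 2\sqrt r + \tfrac14 \le 8(\sqrt r + \tfrac14)$ — so for $C_1=1$ one instead writes $\big|\ln A - \ln B\big| \le \frac{|B-A|}{\min(A,B)} \le |B-A|\big(\tfrac1A + \tfrac1B\big)$ when... no: $\frac{1}{\min(A,B)} \le \frac1A + \frac1B$ is false in general but $\frac{1}{\min} \le \frac1A + \frac1B$ IS true since $\frac{1}{\min(A,B)} = \max(\tfrac1A,\tfrac1B) \le \tfrac1A + \tfrac1B$). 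Thus with $\alpha = 1$: $\big|\ln A - \ln B\big| \le |B - A|\big(A^{-1} + B^{-1}\big) \le (2\sqrt r + \tfrac12)(A^{-1}+B^{-1})$, and since $2\sqrt r + \tfrac12 = 2(\sqrt r + \tfrac14) \le$ ... hmm this gives $C_1 = 2$, not $1$. The resolution must be that one bounds $|B-A|$ more cleverly — perhaps $|B - A| = \big||z^2 - w| - |(z+\tfrac i2)^2 - w|\big|$, and separately $A, B \ge$ (distance considerations) — I would at this point follow the paper's own bookkeeping. The essential structure, and the genuine difficulty, is precisely this $\alpha$-power trick converting $\ln$ of a ratio of two near-equal unbounded quantities into a sum of negative $\alpha$-powers; everything else is triangle inequalities with the region hypothesis $r < (\sqrt R - \tfrac12)^2$.
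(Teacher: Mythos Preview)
Your approach is essentially the same as the paper's: part (i) is a direct modulus bound $\mathfrak{R}[\zeta]\le|\zeta|$, and parts (ii) and (iii) both reduce to the elementary inequality
\[
\left|\ln\left|\frac{z_1}{z_2}\right|\right| \le C_\alpha\left(\left|\frac{z_1-z_2}{z_2}\right|^\alpha + \left|\frac{z_1-z_2}{z_1}\right|^\alpha\right),\qquad 0<\alpha\le 1,\ C_1=1,
\]
which the paper simply cites as \cite[Lemma~3.2]{Chiang-Feng1} rather than re-deriving.

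The reason you could not recover $C_1=1$ in part (iii) is an arithmetic slip: $(z+\tfrac i2)^2 - z^2 = iz - \tfrac14$, \emph{not} $2iz-\tfrac14$. (The expression $2iz-\tfrac12$ in part (i) is $2(x^+-x)$, the factor of $2$ coming from the difference of Poisson kernels, not $x^+-x$ itself.) With $z_1=(z+\tfrac i2)^2-w$ and $z_2=z^2-w$ one therefore has $|z_1-z_2|=|iz-\tfrac14|\le |z|+\tfrac14=\sqrt r+\tfrac14$, and the displayed inequality with $\alpha=1$ gives exactly
\[
\left|\ln\left|\frac{(z+\tfrac i2)^2-w}{z^2-w}\right|\right|\le(\sqrt r+\tfrac14)\left(\frac{1}{|z^2-w|}+\frac{1}{|(z+\tfrac i2)^2-w|}\right),
\]
i.e.\ $C_1=1$ with no further work. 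The same correction cleans up part (ii): taking $z_1=R^2-\overline w(z+\tfrac i2)^2$, $z_2=R^2-\overline w z^2$, one gets $|z_1-z_2|\le|w|(\sqrt r+\tfrac14)$ and $|z_j|\ge R\bigl(R-(\sqrt r+\tfrac12)^2\bigr)$, and the $\alpha=1$ case yields the stated bound after using $R-r\ge R-(\sqrt r+\tfrac12)^2$. Your attempted route via $|\ln|1+u||\le|u|$ and the subadditivity of $t\mapsto t^\alpha$ is a valid alternative derivation of the cited lemma, but the paper avoids this by quoting it directly.
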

\begin{proof}
	Part (i) is easy.\ \ Parts (ii) and (iii) are consequences of \cite[Lemma 3.2]{Chiang-Feng1}, which says that for every $0<\alpha\le 1$, there exists $C_\alpha>0$ such that
\begin{align}
	\label{B6} \left|\ln\left|\frac{z_1}{z_2}\right|\right| \le C_\alpha\left(\left|\frac{z_1-z_2}{z_2}\right|^\alpha + \left|\frac{z_2-z_1}{z_1}\right|^\alpha\right)
\end{align}
for every $z_1,z_2\in\mathbb{C}$, and in particular we can take $C_1=1$.
\end{proof}

\pagebreak
\begin{lemma}
\label{Estimate2}
	Let $0<\alpha<1$ be fixed.\ \ Then the following inequalities hold:
	\begin{enumerate}[(i)]
		\item For every $r>0$ and $w\in\mathbb{C}$,
		\[
			\frac{1}{2\pi}\int_0^{2\pi}\frac{d\theta}{|re^{i2\theta}-w|^\alpha} \le \frac{1}{(1-\alpha)r^\alpha}.
		\]
		\item For every $\varepsilon>0$, there exists $M>0$ depending only on $\varepsilon$, such that for every $w\in\mathbb{C}$,
		\[
			\displaystyle\frac{1}{2\pi}\int_0^{2\pi}\frac{d\theta}{|(\sqrt{r}e^{i\theta}+\frac{i}{2})^2-w|^\alpha} \le \frac{1+\varepsilon}{(1-\alpha)r^\alpha}
		\]
		whenever $r>M$.
	\end{enumerate}
\end{lemma}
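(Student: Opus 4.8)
For part (i) I would first undo the doubling. Since $\theta\mapsto|re^{i2\theta}-w|^{-\alpha}$ has period $\pi$, the substitution $\phi=2\theta$ gives $\frac1{2\pi}\int_0^{2\pi}\frac{d\theta}{|re^{i2\theta}-w|^\alpha}=\frac1{2\pi}\int_0^{2\pi}\frac{d\phi}{|re^{i\phi}-w|^\alpha}$, so it suffices to bound this last integral by $\frac1{(1-\alpha)r^\alpha}$ for arbitrary $w\in\mathbb C$. Let $re^{i\phi_0}$ be a point of $\partial D(0;r)$ nearest $w$; then $|re^{i\phi}-w|\ge|re^{i\phi_0}-w|$ and $|re^{i\phi}-w|\ge|re^{i\phi}-re^{i\phi_0}|-|re^{i\phi_0}-w|$, so $|re^{i\phi}-w|\ge\max\{\,|re^{i\phi_0}-w|,\;|re^{i\phi}-re^{i\phi_0}|-|re^{i\phi_0}-w|\,\}\ge\tfrac12|re^{i\phi}-re^{i\phi_0}|=r\bigl|\sin\tfrac{\phi-\phi_0}{2}\bigr|$. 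Feeding in $|\sin t|\ge\frac2\pi\,\mathrm{dist}(t,\pi\mathbb Z)$ and integrating termwise gives exactly $\frac1{(1-\alpha)r^\alpha}$; this is routine and needs no restriction on $w$ or $r$.

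For part (ii), write $v(\theta):=(\sqrt r\,e^{i\theta}+\tfrac i2)^2=re^{i2\theta}+i\sqrt r\,e^{i\theta}-\tfrac14$, which is a perturbation of order $\sqrt r$ of the doubly-traced circle $\theta\mapsto re^{i2\theta}$, since $|v(\theta)-re^{i2\theta}|\le\sqrt r+\tfrac14$. Fix a constant $K>2$ (to be chosen at the end in terms of $\varepsilon$ and $\alpha$) and split $[0,2\pi)$ into the \emph{good set} $G=\{\theta:|re^{i2\theta}-w|\ge K\sqrt r\}$ and its complement $B$. On $G$ we have $|v(\theta)-w|\ge|re^{i2\theta}-w|-(\sqrt r+\tfrac14)\ge(1-\tfrac2K)|re^{i2\theta}-w|$, so part (i) yields
\[
\frac1{2\pi}\int_G\frac{d\theta}{|v(\theta)-w|^\alpha}\le\Bigl(1-\tfrac2K\Bigr)^{-\alpha}\,\frac1{2\pi}\int_0^{2\pi}\frac{d\theta}{|re^{i2\theta}-w|^\alpha}\le\frac{(1-2/K)^{-\alpha}}{(1-\alpha)r^\alpha}.
\]
Meanwhile $B=\{\theta:(r-|w|)^2+4r|w|\sin^2(\theta-\tfrac12\arg w)<K^2r\}$, which for large $r$ is contained in at most two arcs, each of length $O(K/\sqrt r)<\tfrac1{10}$.

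To estimate $\int_B$ I would use that $v$ moves fast and turns slowly: a direct computation gives $v'(\theta)=2i\sqrt r\,e^{i\theta}(\sqrt r\,e^{i\theta}+\tfrac i2)$ and $v''(\theta)=-4re^{i2\theta}-i\sqrt r\,e^{i\theta}$, whence $|v'(\theta)|\ge2r-\sqrt r\ge r$ and $|v''(\theta)|\le5r$ for all $\theta$ and $r\ge1$; consequently $|v(\theta)-v(\theta_1)|\ge\tfrac r2|\theta-\theta_1|$ whenever $|\theta-\theta_1|\le\tfrac1{10}$, because on such a short arc $v$ stays close to the straight segment in the direction $v'(\theta_1)$. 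On each component arc $I$ of $B$, pick $\theta_\ast\in I$ minimising $|v-w|$ over $I$; then for $\theta\in I$ both $|v(\theta)-w|\ge|v(\theta_\ast)-w|$ and $|v(\theta)-w|\ge|v(\theta)-v(\theta_\ast)|-|v(\theta_\ast)-w|$ hold, so $|v(\theta)-w|\ge\tfrac12|v(\theta)-v(\theta_\ast)|\ge\tfrac r4|\theta-\theta_\ast|$, giving
\[
\int_I\frac{d\theta}{|v(\theta)-w|^\alpha}\le2\Bigl(\tfrac4r\Bigr)^\alpha\frac{|I|^{1-\alpha}}{1-\alpha}=O\!\left(\frac{K^{1-\alpha}}{(1-\alpha)\,r^{(1+\alpha)/2}}\right).
\]
Since $\tfrac{1+\alpha}2>\alpha$, summing over the (at most two) arcs shows $\frac1{2\pi}\int_B|v-w|^{-\alpha}\,d\theta=o(r^{-\alpha})$ with $K$ fixed. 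Adding the two pieces, $\frac1{2\pi}\int_0^{2\pi}|v-w|^{-\alpha}\,d\theta\le\frac{(1-2/K)^{-\alpha}}{(1-\alpha)r^\alpha}+o(r^{-\alpha})$; choosing first $K$ with $(1-2/K)^{-\alpha}\le1+\tfrac\varepsilon2$ (possible as $\alpha<1$ is fixed), then $M$ so large that the $o(r^{-\alpha})$ term is $\le\frac{\varepsilon/2}{(1-\alpha)r^\alpha}$ for $r>M$, finishes part (ii), with $M$ depending only on $\varepsilon$ (and the fixed $\alpha$).

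The crux is the treatment of $B$. In the circle case of part (i) the singularity sits exactly on $\{re^{i\phi}=w\}$, whereas here the near-singularities of $v$ are only within $O(\sqrt r)$ of $\{re^{i2\theta}=w\}$ and, because $v$ winds twice, there may be two of them; I would avoid locating them by working instead with the per-arc minimiser $\theta_\ast$ and the fast-motion/slow-turning bound. It is precisely the demand for the sharp constant $1+\varepsilon$, rather than some fixed $C_\alpha$, that forces this good/bad decomposition together with the observation that the perturbation $i\sqrt r\,e^{i\theta}-\tfrac14$, of order $\sqrt r=o(r)$, is first-order negligible against the governing radius $r$.
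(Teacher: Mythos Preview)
Your argument is correct, and for part~(ii) it follows a genuinely different route from the paper's. The paper bounds $|v(\theta)-w|$ from below by $r$ times the modulus of the imaginary part of $(v(\theta)-w)r^{-1}e^{-i\eta}$ (with $w=\rho e^{i\eta}$), then locates the four real zeros $\theta_j(r)$ of this imaginary part in $[-\tfrac\pi4,\tfrac{7\pi}4]$, tracks their convergence to the zeros of $\sin 2\theta$ as $r\to\infty$, and establishes a pointwise comparison $|\cdot|\ge(1+\varepsilon)^{-1/\alpha}|\sin 2[\theta-\theta_j(r)]|$ via first-order Taylor expansion at each $\theta_j(r)$ before integrating. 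Your good/bad decomposition sidesteps that zero-tracking entirely: on $G$ you reduce directly to the unperturbed circle of part~(i), and on $B$ you replace the exact zero location by the arc-wise minimiser $\theta_\ast$ together with the uniform speed bound $|v'|\ge r$, $|v''|\le 5r$. The trade-off is that the paper's analysis is slightly more explicit about where the near-singularities sit (and gets four of them rather than your two arcs, because it works with the imaginary part which vanishes more often), whereas your argument is more robust---it would transfer with only cosmetic changes to any $O(\sqrt r)$ perturbation of a doubly-traced circle, not just the specific quadratic $v(\theta)=(\sqrt r\,e^{i\theta}+\tfrac i2)^2$. Both reach the sharp constant $1+\varepsilon$, which is what matters for the downstream estimate $m(r,\D_\W f/f)=O(r^{\sigma-\frac12+\varepsilon})$.
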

\begin{proof}
	The proof of (i) is similar to \cite[p. 62]{He-Xiao}, \cite[p. 66]{Jank-Volkmann} or \cite[Lemma 3.3]{Chiang-Feng1}.\ \ To prove (ii), we write $w=\rho e^{i\eta}$, where $\rho\ge0$ and $0\le \eta <2\pi$.\ \ Then
	\begin{align*}
		&\ \ \ \,\frac{1}{2\pi}\int_0^{2\pi}\frac{d\theta}{|(\sqrt{r}e^{i\theta}+\frac{i}{2})^2-w|^\alpha} \\
		&= \frac{1}{2\pi r^\alpha}\int_0^{2\pi}\frac{d\theta}{\left|(e^{i2\theta}+\frac{i}{\sqrt{r}}e^{i\theta}-\frac{1}{4})e^{-i\eta}-\frac{\rho}{r}\right|^\alpha} \\
		&\le \frac{1}{2\pi r^\alpha}\int_0^{2\pi}\frac{d\theta}{|\sin(2\theta-\eta) + \frac{1}{\sqrt{r}}\cos(\theta-\eta) + \frac{1}{4r}\sin\eta|^\alpha} \\
		&= \frac{1}{2\pi r^\alpha}\int_{-\frac{\pi}{4}}^{\frac{7\pi}{4}}\frac{d\theta}{|\sin{2\theta} + \frac{1}{\sqrt{r}}\cos(\theta-\frac{\eta}{2}) + \frac{1}{4r}\sin\eta|^\alpha},
	\end{align*}
	where in the last step we have changed the interval of integration from $[0,2\pi]$ to $I:=[-\frac{\pi}{4},\frac{7\pi}{4}]$ so as to cope with the locations of zeros of the denominator of the integrand.\ \ We note that when $r$ is sufficiently large, the denominator of the last integrand has exactly four zeros in $I$, which we call $\theta_j(r)\in I_j$ for $j\in\{1,2,3,4\}$, where
	\[
		\textstyle I_1:=(-\frac{\pi}{4},\frac{\pi}{4}), \hspace{15pt} I_2:=(\frac{\pi}{4},\frac{3\pi}{4}), \hspace{15pt} I_3:=(\frac{3\pi}{4},\frac{5\pi}{4}), \hspace{15pt} I_4:=(\frac{5\pi}{4},\frac{7\pi}{4}).
	\]

	Now given any $\varepsilon>0$, we claim that there exists $M>0$ depending only on $\varepsilon$, such that whenever $r>M$, we have
	\begin{align}
		\label{B10} \left|\sin{2\theta} + \frac{1}{\sqrt{r}}\cos\left(\theta-\frac{\eta}{2}\right) + \frac{1}{4r}\sin\eta\right| \ge \frac{|\sin{2[\theta-\theta_j(r)]}|}{(1+\varepsilon)^{\frac{1}{\alpha}}}
	\end{align}
	for every $j\in\{1,2,3,4\}$ and $\theta\in I$, with each of the four equalities holding only at the point $\theta=\theta_j(r)$.\ \ Since both $\sin{2\theta} + \frac{1}{\sqrt{r}}\cos\theta + \frac{1}{4r}\sin\eta$ and $\sin{2[\theta-\theta_j(r)]}$ converge pointwise to the function $\sin{2\theta}$ as $r\to\infty$, to show \eqref{B10} it suffices to consider their behaviour around the only (moving) zero $\theta=\theta_j(r)$.

	Since $\theta_j(r)$ is the zero of $\sin{2\theta} + \frac{1}{\sqrt{r}}\cos(\theta-\frac{\eta}{2}) + \frac{1}{4r}\sin\eta$ such that $\displaystyle\lim_{r\to\infty}{\theta_j(r)}=0$, we can easily obtain $\theta_j(r) = O\big(\frac{1}{\sqrt{r}}\big)$ as $r\to\infty$.\ \ Thus around $\theta=\theta_j(r)$, we have the Taylor expansions
	\[
		\sin{2\theta} + \frac{1}{\sqrt{r}}\cos\left(\theta-\frac{\eta}{2}\right) + \frac{1}{4r}\sin\eta = \left(2+\frac{\sin{\frac{\eta}{2}}}{\sqrt{r}}+\cdots\right)[\theta-\theta_j(r)] + \cdots
	\]
	and
	\[
		\sin{2[\theta-\theta_j(r)]} = 2[\theta-\theta_j(r)] + \cdots,
	\]
	from which we see that \eqref{B10} holds for every $\theta\in I$ whenever $r$ is larger than a positive number $M_j$ which depends only on $\varepsilon$.\ \ The proof of the claim is finished by taking $M:=\max{\{M_1,M_2,M_3,M_4\}}$.

	Now \eqref{B10} implies that whenever $r>M$, we have
	\begin{align*}
		\int_{-\frac{\pi}{4}}^{\frac{7\pi}{4}}\frac{d\theta}{|\sin{2\theta} + \frac{1}{\sqrt{r}}\cos(\theta-\frac{\eta}{2}) + \frac{1}{4r}\sin\eta|^\alpha} &\le \sum_{j=1}^4{\int_{I_j}{\frac{1+\varepsilon}{|\sin{2[\theta-\theta_j(r)]}|^\alpha}\,d\theta}} \\
		&= 8(1+\varepsilon)\int_0^{\frac{\pi}{4}}{\frac{d\theta}{\sin^\alpha{2\theta}}},
	\end{align*}
	and so
	\begin{align*}
		\frac{1}{2\pi}\int_0^{2\pi}\frac{d\theta}{|(\sqrt{r}e^{i\theta}+\frac{i}{2})^2-w|^\alpha} &\le \frac{1}{2\pi r^\alpha}\left[8(1+\varepsilon)\int_0^{\frac{\pi}{4}}{\frac{d\theta}{\sin^\alpha{2\theta}}}\right] \\
		&\le \frac{4(1+\varepsilon)}{\pi r^\alpha}\int_0^{\frac{\pi}{4}}\frac{d\theta}{(\frac{4\theta}{\pi})^\alpha} \\
		&= \frac{1+\varepsilon}{(1-\alpha)r^\alpha}.
	\end{align*}
\end{proof}

\noindent \textit{Proof of Theorem~\ref{WilsonLogLemma}.}\ \ Given any meromorphic function $f$, $\D_\W f$ is meromorphic by Proposition~\ref{Mero} (ii), so it makes sense to look for $m\left(r,\frac{\D_\W f}{f}\right)$.\ \ Now the Poisson-Jensen formula asserts that for every $x\in D(0;R)$ which is neither a zero nor a pole of $f$, we have
\begin{align}
	\label{B2}
	\begin{aligned}
		\ln\left|f(x)\right| = &\;\frac{1}{2\pi}\int_{0}^{2\pi}{\ln\left|f(Re^{i\phi})\right|\mathfrak{R}\left(\frac{Re^{i\phi}+x}{Re^{i\phi}-x}\right) d\phi} \\
		&- \sum_{|a_\nu|<R}{\ln\left|\frac{R^2-\overline{a_\nu}x}{R(x-a_\nu)}\right|} + \sum_{|b_\mu|<R}{\ln\left|\frac{R^2-\overline{b_\mu}x}{R(x-b_\mu)}\right|},
	\end{aligned}
\end{align}
where $\{a_\nu\}_\nu$ and $\{b_\mu\}_\mu$ are respectively the sequences of zeros and poles of $f$, repeated according to their multiplicities.\ \ Substituting $x^+=(z+\frac{i}{2})^2$ and $x=z^2$ into \eqref{B2} and subtracting, we obtain
\begin{align}
	\label{B5}
	\begin{aligned}
		\ln\left|\frac{f(x^+)}{f(x)}\right| = &\; \frac{1}{2\pi}\int_{0}^{2\pi}{\ln\left|f(Re^{i\phi})\right|\mathfrak{R}\left[\frac{Re^{i\phi}(2iz-\frac{1}{2})}{(Re^{i\phi}-z^2)(Re^{i\phi}-(z+\frac{i}{2})^2)}\right] d\phi} \\
		&- \sum_{|a_\nu|<R}{\ln\left|\frac{R^2-\overline{a_\nu}(z+\frac{i}{2})^2}{R^2-\overline{a_\nu}z^2}\right|} + \sum_{|b_\mu|<R}{\ln\left|\frac{R^2-\overline{b_\mu}(z+\frac{i}{2})^2}{R^2-\overline{b_\mu}z^2}\right|} \\
		&+ \sum_{|a_\nu|<R}{\ln\left|\frac{(z+\frac{i}{2})^2-a_\nu}{z^2-a_\nu}\right|} - \sum_{|b_\mu|<R}{\ln\left|\frac{(z+\frac{i}{2})^2-b_\mu}{z^2-b_\mu}\right|},
	\end{aligned}
\end{align}
which holds if neither $x$ nor $x^+$ is a zero or a pole of $f$, and if both $x$ and $x^+$ are in $D(0;R)$, i.e. if $R>\frac{1}{4}$ and $0\le r<(\sqrt{R}-\frac{1}{2})^2$, where $r=|x|$. \\

Next we estimate each term on the right-hand side of \eqref{B5} using Lemma~\ref{Estimate1}.\ \ For any given $0<\alpha<1$, the inequality
\begin{align}
	\label{B7}
	\begin{aligned}
		\left|\ln\left|\frac{f(x^+)}{f(x)}\right|\right| \le &\;\frac{R(2\sqrt{r}+\frac{1}{2})}{(R-r)(R-(\sqrt{r}+\frac{1}{2})^2)}\left[m(R,f)+m\left(R,\frac{1}{f}\right)\right] \\
		&+ \frac{2\sqrt{r}+\frac{1}{2}}{R-(\sqrt{r}+\frac{1}{2})^2}\left[n(R,f)+n\left(R,\frac{1}{f}\right)\right] \\
		&+ C_\alpha\left(\sqrt{r}+\frac{1}{4}\right)^\alpha\sum_{|a_\nu|<R}\left[\frac{1}{|z^2-a_\nu|^\alpha} + \frac{1}{|(z+\frac{i}{2})^2-a_\nu|^\alpha}\right] \\
		&+ C_\alpha\left(\sqrt{r}+\frac{1}{4}\right)^\alpha\sum_{|b_\mu|<R}\left[\frac{1}{|z^2-b_\mu|^\alpha} + \frac{1}{|(z+\frac{i}{2})^2-b_\mu|^\alpha}\right]
	\end{aligned}
\end{align}
holds for some constant $C_\alpha>0$.\ \ Noting that
\[
	\left|\ln|w|\right|=\ln^+|w|+\ln^+\left|\frac{1}{w}\right|
\]
for every non-zero complex number $w$, and integrating both sides of \eqref{B7} along the circle $\{z\in\mathbb{C}:|z|=\sqrt{r}\}$, we get
\begin{align}
	\label{B8}
	{\small\begin{aligned}
		&\ 2m\left(r,\frac{f(x^+)}{f(x)}\right)+2m\left(r,\frac{f(x)}{f(x^+)}\right) \\
		\le &\;\frac{R(2\sqrt{r}+\frac{1}{2})}{(R-r)(R-(\sqrt{r}+\frac{1}{2})^2)}\left[m(R,f)+m\left(R,\frac{1}{f}\right)\right] \\
		&+ \frac{2\sqrt{r}+\frac{1}{2}}{R-(\sqrt{r}+\frac{1}{2})^2}\left[n(R,f)+n\left(R,\frac{1}{f}\right)\right] \\
		&+ C_\alpha\left(\sqrt{r}+\frac{1}{4}\right)^\alpha\sum_{|a_\nu|<R}\left[\frac{1}{2\pi}\int_0^{2\pi}\frac{d\theta}{|re^{i2\theta}-a_\nu|^\alpha} + \frac{1}{2\pi}\int_0^{2\pi}\frac{d\theta}{|(\sqrt{r}e^{i\theta}+\frac{i}{2})^2-a_\nu|^\alpha}\right] \\
		&+ C_\alpha\left(\sqrt{r}+\frac{1}{4}\right)^\alpha\sum_{|b_\mu|<R}\left[\frac{1}{2\pi}\int_0^{2\pi}\frac{d\theta}{|re^{i2\theta}-b_\mu|^\alpha} + \frac{1}{2\pi}\int_0^{2\pi}\frac{d\theta}{|(\sqrt{r}e^{i\theta}+\frac{i}{2})^2-b_\mu|^\alpha}\right].
	\end{aligned}}
\end{align}
Note that on the left-hand side we have equivalently integrated along the circle $\{x\in\mathbb{C}:|x|=r\}$ for two complete revolutions.\ \ We emphasize here that the argument $r$ in the proximity functions in \eqref{B8} refers to the modulus of $x$, or of $z^2$. \\

We next give upper bounds to the integrals on the right-hand side of \eqref{B8}.\ \ Applying Lemma~\ref{Estimate2} (i) and Lemma~\ref{Estimate2} (ii) with $\varepsilon=1$, we see that there exists $M>0$ such that
\begin{align}
	\label{B11}
	\begin{aligned}
		&\ 2m\left(r,\frac{f(x^+)}{f(x)}\right) + 2m\left(r,\frac{f(x)}{f(x^+)}\right) \\
		\le &\;\frac{R(2\sqrt{r}+\frac{1}{2})}{(R-r)(R-(\sqrt{r}+\frac{1}{2})^2)}\left[m(R,f)+m\left(R,\frac{1}{f}\right)\right] \\
		&+ \frac{2\sqrt{r}+\frac{1}{2}}{R-(\sqrt{r}+\frac{1}{2})^2}\left[n(R,f)+n\left(R,\frac{1}{f}\right)\right] \\
		&+ C_\alpha\left(\sqrt{r}+\frac{1}{4}\right)^\alpha\left[\sum_{|a_\nu|<R}\frac{1+2}{(1-\alpha)r^\alpha} + \sum_{|b_\mu|<R}\frac{1+2}{(1-\alpha)r^\alpha}\right] \\
		= &\;\frac{R(2\sqrt{r}+\frac{1}{2})}{(R-r)(R-(\sqrt{r}+\frac{1}{2})^2)}\left[m(R,f)+m\left(R,\frac{1}{f}\right)\right] \\
		&+ \left[\frac{2\sqrt{r}+\frac{1}{2}}{R-(\sqrt{r}+\frac{1}{2})^2} + \frac{3C_\alpha\left(\sqrt{r}+\frac{1}{4}\right)^\alpha}{(1-\alpha)r^\alpha}\right]\left[n(R,f)+n\left(R,\frac{1}{f}\right)\right]
	\end{aligned}
\end{align}
whenever $r>M$ and $r<(\sqrt{R}-\frac{1}{2})^2$.\ \ Note that there is no exceptional set for $r$ in \eqref{B11}, as Lemma~\ref{Estimate2} holds for every $w\in\mathbb{C}$ without exception.\ \ Now the number $R>(\sqrt{r}+\frac{1}{2})^2$ was arbitrary in the beginning, so taking $R=2r$ in \eqref{B11} we get
\begin{align}
	\label{B12}
	\begin{aligned}
		&\ m\left(r,\frac{f(x^+)}{f(x)}\right) + m\left(r,\frac{f(x)}{f(x^+)}\right) \\
		\le &\;O\left(\frac{1}{\sqrt{r}}\right)\left[m(2r,f)+m\left(2r,\frac{1}{f}\right)\right] \\
		&+ \left[O\left(\frac{1}{\sqrt{r}}\right) + O\left(\frac{1}{r^\frac{\alpha}{2}}\right)\right]\left[n(2r,f)+n\left(2r,\frac{1}{f}\right)\right] \\
		\le &\;O\left(\frac{1}{\sqrt{r}}\right)T(2r,f) + O\left(\frac{1}{r^\frac{\alpha}{2}}\right)T(4r,f) \\
		\le &\;O\left(\frac{1}{r^\frac{\alpha}{2}}\right)T(4r,f)
	\end{aligned}
\end{align}
as $r\to\infty$, where the second inequality in \eqref{B12} follows from that
\begin{align}
	\label{B13} m(R,f)+m\left(R,\frac{1}{f}\right) \le 2T(R,f)+\ln^+\frac{1}{|f(0)|}
\end{align}
and that
\[
	n(R,f)+n\left(R,\frac{1}{f}\right) \le \frac{2R}{2R-R}\,N(2R,f) \le 2T(2R,f).
\]
Following essentially the same argument, it can also be shown that
\begin{align}
	\label{B14} m\left(r,\frac{f(x^-)}{f(x)}\right)+m\left(r,\frac{f(x)}{f(x^-)}\right) = O\left(\frac{1}{r^\frac{\alpha}{2}}\right)T(4r,f)
\end{align}
as $r\to\infty$. \\

Now if $f\not\equiv0$ is a meromorphic function of finite order $\sigma$, then for every small $0<\varepsilon<1$, we have
\begin{align}
	\label{B15} T(4r,f)=O(r^{\sigma+\frac{\varepsilon}{2}})
\end{align}
as $r\to\infty$.\ \ Choosing $\alpha=1-\varepsilon$, \eqref{B12}, \eqref{B14} and \eqref{B15} imply that
\begin{align}
	\label{B16}
	\begin{aligned}
		&\ \ \ \,m\left(r,\frac{\D_\W f}{f}\right) \\
		&\le m\left(r,\frac{f(x^+)}{f(x)}\right) + m\left(r,\frac{f(x^-)}{f(x)}\right) + 2m\left(r,\frac{1}{iz}\right) \\
		&\le m\left(r,\frac{f(x^+)}{f(x)}\right) + m\left(r,\frac{f(x)}{f(x^+)}\right) + m\left(r,\frac{f(x^-)}{f(x)}\right) + m\left(r,\frac{f(x)}{f(x^-)}\right) + \ln^+\frac{1}{r} \\
		&= O\left(\frac{1}{r^\frac{1-\varepsilon}{2}}\right)T(4r,f) = O(r^{\sigma-\frac{1}{2}+\varepsilon})
	\end{aligned}
\end{align}
as $r\to\infty$, where in the first step we have used the inequalities
\[
	\ln^+{pq} \le \ln^+p + \ln^+q \hspace{25px} \mbox{and} \hspace{25px} \ln^+{\frac{p+q}{2}} \le \ln^+p + \ln^+q,
\]
which hold for every pair of positive real numbers $p$ and $q$.\ \ Of course \eqref{B16} holds for $\varepsilon\ge 1$ as well.\ \ This finishes the proof of Theorem~\ref{WilsonLogLemma}. \hfill \qed \\

\noindent \textbf{Note added in proof.}\ \ After this paper has been completed, we learnt of Korhonen's paper \cite{Korhonen} suggested by the anonymous referee, in which there is an analogue of the lemma on logarithmic derivative for meromorphic functions composed with polynomials.\ \ While the setting is more general in that analogue, our Theorem~\ref{WilsonLogLemma} yields a sharp estimate of $O(r^{\sigma-\frac12+\varepsilon})$ without exceptional set, compared with $O(r^{\sigma-\frac14+\varepsilon})$ with some exceptional set which would have been given by that analogue.\ \ The difference between them mainly comes from our more detailed analysis gearing toward the Wilson operator in Lemma~\ref{Estimate2} (ii).\ \ \cite[Lemma 2.3]{Korhonen} was used to obtain the mentioned analogue, but since it only accounts for the case $0<\alpha<\frac12$ of our Lemma~\ref{Estimate2} (ii), it does not yield our Theorem~\ref{WilsonLogLemma}.\ \ The sharpness of the estimate $O(r^{\sigma-\frac12+\varepsilon})$ in our Theorem~\ref{WilsonLogLemma} is illustrated by the simple example following it, and the estimate also agrees with that for $N(r,f(x^+))$ in Lemma~\ref{Estimate3} in the next section.

\section{Proofs of Main Results}
\label{sec:Proofs}

\subsection{Proof of Theorem~\ref{FundIneq}}

We first prove the fundamental inequality (Theorem~\ref{FundIneq}) by making use of the lemma on logarithmic Wilson difference (Theorem~\ref{WilsonLogLemma}).\ \ The proof is quite standard and is based on Halburd and Korhonen's modification \cite{Halburd-Korhonen} of Nevanlinna's original argument. \\

\noindent \textit{Proof of Theorem~\ref{FundIneq}.}\ \ First let $P(f)$ be the polynomial
	\[
		P(f):=\prod_{n=1}^{q}{(f-y_n)}.
	\]
	We have $\deg_{f}{P}=q>0$.\ \ Since $f\notin\ker\D_\W$, we have $P(f)\not\equiv0$, and so there exist complex numbers $a_1, a_2, \ldots, a_q$ such that
	\[
		\frac{\D_\W f}{P(f)} = \sum_{n=1}^{q}{a_n\frac{\D_\W f}{f-y_n}}.
	\]
	Thus for every $\varepsilon>0$, we have
	\begin{align*}
		m\left(r,\frac{\D_\W f}{P(f)}\right) \le \sum_{n=1}^{q}{m\left(r,\frac{\D_\W f}{f-y_n}\right)} + O(1) = O(r^{\sigma-\frac{1}{2}+\varepsilon}) + O(1)
	\end{align*}
	as $r\to\infty$ by Theorem~\ref{WilsonLogLemma}, which implies that
	\begin{align}
		\label{C3} m\left(r,\frac{1}{P(f)}\right) = m\left(r,\frac{\D_\W f}{P(f)}\frac{1}{\D_\W f}\right) \le m\left(r,\frac{1}{\D_\W f}\right) + O(r^{\sigma-\frac{1}{2}+\varepsilon}) + O(1)
	\end{align}
	as $r\to\infty$, since $f\notin\ker\D_\W$.\ \ Besides this inequality \eqref{C3}, we also have the trivial equality
	\begin{align}
		\label{C4} N\left(r,\frac{1}{P(f)}\right) = \sum_{n=1}^{q}{N\left(r,\frac{1}{f-y_n}\right)}
	\end{align}
	and Mohon'ko's equality \cite{Mohon'ko} \cite[Theorem 2.2.5]{Laine} \cite[Lemma 3.5]{Chiang-Feng1}
	\begin{align}
		\label{C5} T(r,P(f))=qT(r,f)+O(1)
	\end{align}
	as $r\to\infty$. \\

	Since $\frac{1}{P(f)}$ is clearly meromorphic and $\D_\W f$ is meromorphic by Proposition~\ref{Mero}, we can apply Nevanlinna's First Fundamental Theorem \eqref{B1} and the above three results \eqref{C3}, \eqref{C4} and \eqref{C5} to get, for every $\varepsilon>0$,
	\begin{align*}
		T(r,\D_\W f) &= m\left(r,\frac{1}{\D_\W f}\right) + N\left(r,\frac{1}{\D_\W f}\right) + O(1) \\
		&\ge m\left(r,\frac{1}{P(f)}\right) + N\left(r,\frac{1}{\D_\W f}\right) + O(r^{\sigma-\frac{1}{2}+\varepsilon}) + O(1) \\
		&= T(r,P(f)) - N\left(r,\frac{1}{P(f)}\right) + N\left(r,\frac{1}{\D_\W f}\right) + O(r^{\sigma-\frac{1}{2}+\varepsilon}) + O(1) \\
		&= qT(r,f) - \sum_{n=1}^{q}{N\left(r,\frac{1}{f-y_n}\right)} + N\left(r,\frac{1}{\D_\W f}\right) + O(r^{\sigma-\frac{1}{2}+\varepsilon}) + O(1) \\
		&= \sum_{n=1}^{q}{m\left(r,\frac{1}{f-y_n}\right)} + N\left(r,\frac{1}{\D_\W f}\right) + O(r^{\sigma-\frac{1}{2}+\varepsilon}) + O(1)
	\end{align*}
	as $r\to\infty$.\ \ This implies that
	\begin{align*}
		&\ \ \ \ m(r,f)+\sum_{n=1}^{q}{m\left(r,\frac{1}{f-y_n}\right)} \\
		&\le T(r,\D_\W f) - N\left(r,\frac{1}{\D_\W f}\right) + m(r,f) + O(r^{\sigma-\frac{1}{2}+\varepsilon}) + O(1) \\
		&= m(r,\D_\W f) + N(r,\D_\W f) - N\left(r,\frac{1}{\D_\W f}\right) + m(r,f) + O(r^{\sigma-\frac{1}{2}+\varepsilon}) + O(1) \\
		&\le 2m(r,f) + N(r,\D_\W f) - N\left(r,\frac{1}{\D_\W f}\right) + O(r^{\sigma-\frac{1}{2}+\varepsilon}) + O(1) \\
		&\le 2T(r,f) - N_\W(r) + O(r^{\sigma-\frac{1}{2}+\varepsilon}) + O(1)
	\end{align*}
	as $r\to\infty$, where the second last inequality follows from Theorem~\ref{WilsonLogLemma} again. \hfill \qed

\subsection{Wilson counting functions and two lemmas}
Now we introduce the following new counting functions which are more suitable for $\D_\W$.
\begin{definition}
\label{WilsonCount}
	Let $f$ be a meromorphic function such that $f\notin\ker\D_\W$.\ \ We define the following \textbf{\textit{Wilson counting functions}} for $f$:\ \ For each $r\ge 0$, we define
	\begin{enumerate}[(i)]
		\item $n_\W(r,f) := \displaystyle\sum_{x\in S_r}{\mbox{order of zero of }\textstyle\D_\W(\frac{1}{f})\mbox{ at }x^+}$, and
		\item {\small $\widetilde{n_\W}(r,f) := \displaystyle\sum_{x\in S_r}{\max\left\{0,\mbox{order of pole of }f\mbox{ at }x - \mbox{order of zero of }\textstyle\D_\W(\frac{1}{f})\mbox{ at }x^+\right\}}$,}
	\end{enumerate}
	where $S_r:=\left\{x\in \overline{D(0;r)}: \frac{1}{f}\left(x\right)=0\right\}$; and for each $r>0$, we define
	\begin{enumerate}[(i)]
	\setcounter{enumi}{2}
		\item $N_\W(r,f) := \displaystyle\int_{0}^{r}{\frac{n_\W(t,f)-n_\W(0,f)}{t}\,dt} + n_\W(0,f) \ln r$, and
		\item $\widetilde{N_\W}(r,f) := \displaystyle\int_{0}^{r}{\frac{\widetilde{n_\W}(t,f)-\widetilde{n_\W}(0,f)}{t}\,dt} + \widetilde{n_\W}(0,f) \ln r$.
	\end{enumerate}
\end{definition}
Here $\widetilde{n_\W}(r,f)$ and $\widetilde{N_\W}(r,f)$ are respectively the Wilson analogues of the distinct pole counting functions $\overline{n}(r,f)$ and $\overline{N}(r,f)$.\ \ Note that we have defined $\widetilde{n_\W}(r,f)$ in a different way from Halburd and Korhonen's difference operator analogue in \cite{Halburd-Korhonen}, so that this quantity is always non-negative as one should expect, even when there are poles $x$ and $x^{++}$ of $f$ with the same order and the same initial Laurent coefficients.\ \ This rationale has also been adopted in defining the Askey-Wilson analogue of the same quantity in \cite{Chiang-Feng3}.\ \ Also note that if we have used $x^-$ instead of $x^+$ in defining both $n_\W(r,f)$ and of $\widetilde{n_\W}(r,f)$, then it just amounts to the choice of the other branch of the square-root function with the same branch cut in the beginning. \\

Next we state the following useful lemma.
\begin{lemma}
\label{Estimate3}
	Let $f$ be a meromorphic function of finite order $\sigma$. Then for every $\varepsilon>0$, we have
	\[
		N\big(r,f(x^+)\big) = N(r,f) + O(r^{\sigma-\frac{1}{2}+\varepsilon}) + O(\ln r)
	\]
	as $r\to\infty$.
\end{lemma}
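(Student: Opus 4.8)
The plan is to compare the poles of the function $x\mapsto f(x^+)$ directly with the poles of $f$, and then transfer this comparison from the un-integrated to the integrated counting functions. First I would identify the poles. Away from the origin and the branch cut, the map $x\mapsto x^+=(\sqrt x+\tfrac i2)^2=x+i\sqrt x-\tfrac14$ is a local biholomorphism whose local inverse is $b\mapsto b^-=(\sqrt b-\tfrac i2)^2$; hence $f(x^+)$ has a pole of order $k$ at $x_0$ precisely when $f$ has a pole of order $k$ at $x_0^+$. Letting $b$ run over the poles of $f$, repeated according to multiplicity, the poles of $f(x^+)$ are therefore exactly the points $b^-$, with the same multiplicities. (The poles $b$ of $f$ lying on the negative real axis, for which $\sqrt b$ sits on the branch cut, together with the finitely many poles of small modulus, need a separate word; they will affect the final error only by an additive $O(\ln r)$.)

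The arithmetic core is the elementary inequality
\[
  \bigl|\,|b^-|-|b|\,\bigr|=\bigl|\,|b-i\sqrt b-\tfrac14|-|b|\,\bigr|\le\bigl|\,i\sqrt b+\tfrac14\,\bigr|\le\sqrt{|b|}+\tfrac14,
\]
valid for every $b\in\mathbb C$. It shows that $n\bigl(r-O(\sqrt r),f\bigr)\le n\bigl(r,f(x^+)\bigr)\le n\bigl(r+O(\sqrt r),f\bigr)$ for large $r$, so $n(\cdot,f(x^+))$ and $n(\cdot,f)$ differ only through the poles lying in an annulus of width $O(\sqrt r)$. To pass to $N$, I would subtract the Jensen-type expressions $N(r,g)=\sum_{0<|c_\mu|\le r}\ln\tfrac r{|c_\mu|}+n(0,g)\ln r$ for $g=f(x^+)$ and for $g=f$, and split the resulting sum over the poles $b$ of $f$ into three parts: (a) those with $|b|\le r$ and $|b^-|\le r$, each contributing $\bigl|\ln\tfrac{|b|}{|b^-|}\bigr|$, which for $|b|$ large is at most a constant multiple of $|b|^{-1/2}$; (b) those with exactly one of $|b|,|b^-|$ at most $r$, which lie in the annulus above and each contribute $O(r^{-1/2})$; and (c) the finitely many small poles together with the term $\bigl(n(0,f(x^+))-n(0,f)\bigr)\ln r$, where $n(0,f(x^+))$ is the order of the pole of $f$ at $-\tfrac14$, all of which is $O(\ln r)$.

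What remains is to bound $\sum_{0<|b|\le r}|b|^{-1/2}$ (from part (a)) and the annulus count $n(r+O(\sqrt r),f)-n(r-O(\sqrt r),f)$ (from part (b)). Since $f$ has finite order $\sigma$, we have $N(r,f)=O(r^{\sigma+\varepsilon'})$ and hence $n(r,f)=O(r^{\sigma+\varepsilon'})$ for every $\varepsilon'>0$; Abel summation against $dn(t,f)$ then gives $\sum_{0<|b|\le r}|b|^{-1/2}=O(r^{\sigma-1/2+\varepsilon})+O(\ln r)$, while the annulus count multiplied by $O(r^{-1/2})$ is $O(r^{\sigma-1/2+\varepsilon'})$. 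Assembling parts (a), (b), (c) yields $N(r,f(x^+))=N(r,f)+O(r^{\sigma-1/2+\varepsilon})+O(\ln r)$. I expect the one genuinely delicate point to be the bookkeeping around the branch cut — making precise that $f(x^+)$ has its poles exactly at the $b^-$'s with the correct multiplicities even though $f(x^+)$ is not single-valued; granting that, the rest is the same annulus-plus-partial-summation estimate already used in the proof of Theorem~\ref{WilsonLogLemma}, and the width $O(\sqrt r)$ of the relevant annulus is precisely what produces the exponent $\sigma-\tfrac12$.
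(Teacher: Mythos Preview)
Your proposal is correct and follows essentially the same route as the paper: both identify the poles of $f(x^+)$ as the points $b^-$ (the paper records this via $x^{+-}=x^{-+}=x$ for $|x|>\tfrac14$, which is exactly the branch-cut fact you flag as delicate), write $N(r,f(x^+))-N(r,f)$ as a sum over poles, and split into the same three cases (both indices in the disc, one in and one out, small poles absorbed into $O(\ln r)$). The only substantive difference is in bounding $\sum_{|b_\mu|\le r}|b_\mu|^{-1/2}$: the paper uses H\"older's inequality together with a case split at $\sigma=\tfrac12$, whereas your Abel-summation argument is slightly cleaner and avoids the case distinction.
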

\begin{proof}
	It is easy to see that for every $x\in\mathbb{C}$ with $|x|>\frac{1}{4}$, we have $x^{+-}=x^{-+}=x$.\ \ So by the definition of the integrated counting function, we have
	\begin{align*}
		N(r,f(x^+)) &= \int_0^r{\frac{n(t,f(x^+))-n(0,f(x^+))}{t}\,dt} + n(0,f(x^+))\ln r \\
		&= \int_1^r{\frac{n(t,f(x^+))-n(1,f(x^+))}{t}\,dt} + O(\ln r) \\
		&= \sum_{1\le|b_\mu^-|<r}{\ln\frac{r}{|b_\mu^-|}} + O(\ln r)
	\end{align*}
	as $r\to\infty$, where $\{b_\mu\}_\mu$ is the sequence of poles of $f$.\ \ This gives, as $r\to\infty$,
	\begin{align}
		\label{C6}
		\begin{aligned}
			&\ \ \ \,|N(r,f(x^+))-N(r,f)| \\
			&= \left|\sum_{1\le|b_\mu|<r}{\ln\frac{r}{|b_\mu|}}-\sum_{1\le|b_\mu^-|<r}{\ln\frac{r}{|b_\mu^-|}}+O(\ln r)\right| \\
			&\le \sum_{\substack{1\le|b_\mu|<r\\ 1\le|b_\mu^-|<r}}{\left|\ln\left|\frac{b_\mu}{b_\mu^-}\right|\right|} + \sum_{\substack{|b_\mu|\ge r\\ 1\le|b_\mu^-|<r}}{\ln\frac{r}{|b_\mu^-|}} + \sum_{\substack{1\le|b_\mu|<r\\|b_\mu^-|\ge r}}{\ln\frac{r}{|b_\mu|}} + O(\ln r).
		\end{aligned}
	\end{align}

	Next, by applying \eqref{B6} with $\alpha=1$, we see that if $1\le|b_\mu|<r$ and $1\le|b_\mu^-|<r$, then
	\begin{align}
		\label{C7}
		\begin{aligned}
			\left|\ln\left|\frac{b_\mu}{b_\mu^-}\right|\right| &\le \left|\frac{b_\mu-b_\mu^-}{b_\mu}\right| + \left|\frac{b_\mu-b_\mu^-}{b_\mu^-}\right| = \left|\frac{i}{\sqrt{b_\mu}}+\frac{1}{4b_\mu}\right| + \left|\frac{i}{\sqrt{b_\mu^-}}+\frac{1}{4b_\mu^-}\right| \\
			&\le \frac{1}{\sqrt{|b_\mu|}}+\frac{1}{4|b_\mu|} + \frac{1}{\sqrt{|b_\mu^-|}}+\frac{1}{4|b_\mu^-|} \le \frac{5}{4}\left(\frac{1}{\sqrt{|b_\mu|}}+\frac{1}{\sqrt{|b_\mu^-|}}\right).
		\end{aligned}
	\end{align}
	Moreover, if $|b_\mu|\ge r$ and $1\le|b_\mu^-|<r$, then we have
	\begin{align}
		\label{C8}
		\begin{aligned}
			\ln\frac{r}{|b_\mu^-|} &\le \ln\frac{|b_\mu|}{|b_\mu^-|} = \ln\left|1+\frac{i}{2\sqrt{b_\mu^-}}\right|^2 \\
			&\le \ln\left(1+\frac{1}{2\sqrt{|b_\mu^-|}}\right)^2 < \frac{1}{\sqrt{|b_\mu^-|}}.
		\end{aligned}
	\end{align}
	Similarly, if $1\le|b_\mu|<r$ and $|b_\mu^-|\ge r$, then we have
	\begin{align}
		\label{C9} \ln\frac{r}{|b_\mu|} < \frac{1}{\sqrt{|b_\mu|}}.
	\end{align} \\

	Thus by putting \eqref{C7}, \eqref{C8} and \eqref{C9} into \eqref{C6}, we obtain
	\begin{align}
		\label{C10}
		\begin{aligned}
			&\ \ \ \ |N(r,f(x^+))-N(r,f)| \\
			&\le \frac{5}{4}\sum_{\substack{1\le|b_\mu|<r\\ 1\le|b_\mu^-|<r}}{\left(\frac{1}{\sqrt{|b_\mu|}}+\frac{1}{\sqrt{|b_\mu^-|}}\right)} + \sum_{\substack{|b_\mu|\ge r\\ 1\le|b_\mu^-|<r}}{\frac{1}{\sqrt{|b_\mu^-|}}} + \sum_{\substack{1\le|b_\mu|<r\\|b_\mu^-|\ge r}}{\frac{1}{\sqrt{|b_\mu|}}} + O(\ln r) \\
			&\le \frac{5}{4}\left(\sum_{1\le|b_\mu|<r}{\frac{1}{\sqrt{|b_\mu|}}} + \sum_{1\le|b_\mu^-|<r}{\frac{1}{\sqrt{|b_\mu^-|}}}\right) + O(\ln r)
		\end{aligned}
	\end{align}
	as $r\to\infty$.\ \ Now in the second summand on the right-hand side of \eqref{C10}, we have
	\begin{align*}
		\frac{1}{\sqrt{|b_\mu^-|}} = \frac{1}{\sqrt{|b_\mu|}}\sqrt{\left|\frac{b_\mu}{b_\mu^-}\right|} = \frac{1}{\sqrt{|b_\mu|}}\left|1+\frac{i}{2\sqrt{b_\mu^-}}\right| \le \frac{3}{2\sqrt{|b_\mu|}}
	\end{align*}
	since $1\le|b_\mu^-|<r$, thus
	\begin{align}
		\label{C11} |N(r,f(x^+))-N(r,f)| \le \frac{25}{8}\sum_{1\le|b_\mu|<r+\sqrt{r}+\frac{1}{4}}{\frac{1}{\sqrt{|b_\mu|}}} + O(\ln r)
	\end{align}
	as $r\to\infty$. \\

	Finally, to estimate the series on the right-hand side of \eqref{C11}, we consider the following two cases separately:
	\begin{enumerate}[(i)]
		\item If $\sigma<\frac{1}{2}$, then the exponent of convergence of the sequence $\{b_\mu\}_\mu$ is also less than $\frac{1}{2}$, so
		\[
			\sum_{1\le|b_\mu|<r+\sqrt{r}+\frac{1}{4}}{\frac{1}{\sqrt{|b_\mu|}}} = O(1)
		\]
		as $r\to\infty$.
		\item In case $\sigma\ge\frac{1}{2}$, H\"{o}lder's inequality implies that for every $\varepsilon>0$,
		\begin{align*}
			&\ \ \ \ \sum_{1\le|b_\mu|<r+\sqrt{r}+\frac{1}{4}}{\frac{1}{\sqrt{|b_\mu|}}} \\
			&\le \left[\sum_{1\le|b_\mu|<r+\sqrt{r}+\frac{1}{4}}{\left(\frac{1}{\sqrt{|b_\mu|}}\right)^{2\sigma+2\varepsilon}}\right]^{\frac{1}{2\sigma+2\varepsilon}}\left[\sum_{1\le|b_\mu|<r+\sqrt{r}+\frac{1}{4}}{(1)^{\frac{2\sigma+2\varepsilon}{2\sigma-1+2\varepsilon}}}\right]^{\frac{2\sigma-1+2\varepsilon}{2\sigma+2\varepsilon}} \\
			&= \left(\sum_{1\le|b_\mu|<r+\sqrt{r}+\frac{1}{4}}{\frac{1}{|b_\mu|^{\sigma+\varepsilon}}}\right)^{\frac{1}{2\sigma+2\varepsilon}}\left(\sum_{1\le|b_\mu|<r+\sqrt{r}+\frac{1}{4}}{1}\right)^{\frac{2\sigma-1+2\varepsilon}{2\sigma+2\varepsilon}} \\
			&\le O(1)\cdot \left[n\left(r+\sqrt{r}+\frac{1}{4},f\right)\right]^{\frac{2\sigma-1+2\varepsilon}{2\sigma+2\varepsilon}} \\
			&\le O(1)\cdot O\left(\left(r+\sqrt{r}+\frac{1}{4}\right)^{(\sigma+\varepsilon)\left(\frac{2\sigma-1+2\varepsilon}{2\sigma+2\varepsilon}\right)}\right) \\
			&= O(r^{\sigma-\frac{1}{2}+\varepsilon})
		\end{align*}
as $r\to\infty$.
	\end{enumerate}
	Therefore in both cases, \eqref{C11} gives
	\[
		|N(r,f(x^+))-N(r,f)| = O(r^{\sigma-\frac{1}{2}+\varepsilon}) + O(\ln r)
	\]
	as $r\to\infty$.
\end{proof}

Note that the very first sentence in the proof of Lemma~\ref{Estimate3} depends on the fact that we have chosen the branch cut of the square-root function to be the imaginary axis in defining $x^+$. \\

Using the Wilson counting function $N_\W(r,f)$ we have defined, we can interpret the Wilson ramification term $N_\W(r)$ as in the first part of the following lemma.
\begin{lemma}
\label{Estimate4}
	Let $f$ be a meromorphic function of finite order $\sigma$ such that $f\notin\ker\D_\W$.\ \ Then the following inequalities hold:
	\begin{enumerate}[(i)]
		\item For every $r>0$, we have
			\[
				N(r,f) - N_\W(r,f) \le \widetilde{N_\W}(r,f).
			\]
		\item For every $\varepsilon>0$, we have
			\[
				N_\W(r,f) + \sum_{a\in\mathbb{C}}{N_\W\left(r,\frac{1}{f-a}\right)} \le N_\W(r) + O(r^{\sigma-\frac{1}{2}+\varepsilon}) + O(\ln r)
			\]
			as $r\to\infty$, where $N_\W(r)$ is the Wilson ramification term defined in \eqref{C2}.
	\end{enumerate}
\end{lemma}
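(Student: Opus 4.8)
My plan is to prove both inequalities first for the unintegrated counting functions and then to integrate, using Lemma~\ref{Estimate3} --- together with its twin $N(r,f(x^{-}))=N(r,f)+O(r^{\sigma-\frac12+\varepsilon})+O(\ln r)$, which follows by switching to the other branch of $\sqrt{\cdot}$ --- to replace each shifted radius $|x^{+}|$ or $|x^{-}|$ by $|x|$ at the cost of an error of the permitted size.

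For (i), I would note that $n(t,f)$, $n_\W(t,f)$ and $\widetilde{n_\W}(t,f)$ are step functions whose jumps all occur at the moduli of the poles of $f$, since $S_t$ is exactly the set of poles of $f$ in $\overline{D(0;t)}$. At a pole $x_0$ of order $p$, writing $s_{x_0}$ for the order of zero of $\D_\W(\tfrac1f)$ at $x_0^{+}$, these functions jump by $p$, $s_{x_0}$ and $\max\{0,\,p-s_{x_0}\}$ respectively. Since $p-s_{x_0}\le\max\{0,\,p-s_{x_0}\}$, we get $n(t,f)-n_\W(t,f)\le\widetilde{n_\W}(t,f)$ for all $t$, and comparing the three integrated functions term by term --- each being of the form $\sum_{0<|x_0|\le r}(\text{jump})\ln\frac{r}{|x_0|}$ plus $(\text{jump at }0)\ln r$, with $\ln\frac{r}{|x_0|}\ge0$ --- gives $N(r,f)-N_\W(r,f)\le\widetilde{N_\W}(r,f)$; the contribution of a pole at the origin is handled directly, using that the excess jump $\max\{0,w\}-w=\max\{0,-w\}$ is non-negative and that the behaviour of $x\mapsto x^{\pm}$ near $-\tfrac14$ is explicit.

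For (ii), the starting point is that $\D_\W\!\big(\tfrac{1}{f-a}\big)=\D_\W f$ for every $a\in\mathbb C$, so $n_\W(r,\tfrac1{f-a})$ adds up, over the $a$-points $x$ of $f$ in $\overline{D(0;r)}$, the order of zero of $\D_\W f$ at $x^{+}$; since $x\mapsto x^{+}$ is injective, summing over all $a\in\mathbb C$ collects each zero $w$ of $\D_\W f$ with $f(w^{-})$ finite, with its full multiplicity and located at the radius $|w^{-}|$. Dropping the requirement that $f(w^{-})$ be finite turns this into the integrated zero-counting function of $\D_\W f$ taken at the radii $|w^{-}|$, namely $N\big(r,(1/\D_\W f)(x^{+})\big)$, which by Lemma~\ref{Estimate3} equals $N(r,1/\D_\W f)+O(r^{\sigma-\frac12+\varepsilon})+O(\ln r)$; this step uses the elementary fact that $\D_\W f$, hence $1/\D_\W f$, has order $\le\sigma$, which one reads off from $T(r,\D_\W f)\le m\big(r,\tfrac{\D_\W f}{f}\big)+m(r,f)+N(r,\D_\W f)$ together with Theorem~\ref{WilsonLogLemma} and Lemma~\ref{Estimate3}. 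The zeros $w$ discarded above are precisely those with $w^{-}$ a pole of $f$, i.e. $w=b^{+}$ for a pole $b$ of $f$, and the quotient rule $\D_\W(\tfrac1f)=-\D_\W f/\big(f(x^{+})f(x^{-})\big)$ shows that at such a point the order of zero of $\D_\W(\tfrac1f)$ is at least that of $\D_\W f$, so these discarded zeros are recovered by $n_\W(r,f)$. It then remains to do the bookkeeping at the poles of $f$: a pole of $f$ at $b$ of order $p$ contributes $2p$ to $2n(r,f)$ and generates poles of $\D_\W f$ --- or, in exceptional cancellation cases, zeros --- at the two adjacent lattice points $b^{+}$ and $b^{-}$; after realigning $|b^{\pm}|$ with $|b|$ via Lemma~\ref{Estimate3}, and after noting that a pole of $\D_\W f$ lying at the lattice point common to $b^{-}$ and $(b^{--})^{+}$ (which happens exactly when $b$ and $b^{--}$ are both poles) must be counted only once in $n(r,\D_\W f)$, one checks that $n_\W(r,f)$ together with the discarded zeros contribute at most $2n(r,f)-n(r,\D_\W f)$. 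Assembling everything yields $N_\W(r,f)+\sum_{a\in\mathbb C}N_\W(r,\tfrac1{f-a})\le N(r,1/\D_\W f)+2N(r,f)-N(r,\D_\W f)+O(r^{\sigma-\frac12+\varepsilon})+O(\ln r)$, which is (ii).

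The hard part will be the local analysis at the poles of $f$ in (ii): one must track how a single pole of $f$ propagates to zeros and poles of $\D_\W f$ at the three adjacent lattice points $b$, $b^{+}$, $b^{-}$, match these against the defining sums of $n_\W(r,f)$ and $n_\W(r,\tfrac1{f-a})$ while avoiding double counting whenever $f$ has poles at lattice-consecutive points, and verify that the resulting pointwise inequality --- which is sharp --- survives the summation and the several radius shifts. The radius shifts themselves, and the order bound on $\D_\W f$, are by contrast routine once the $x^{-}$-analogue of Lemma~\ref{Estimate3} is recorded.
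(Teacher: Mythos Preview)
Your proposal is essentially correct and follows the same strategy as the paper --- prove the inequalities for the unintegrated counting functions, then integrate and apply Lemma~\ref{Estimate3} --- but two points are worth flagging.

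First, a slip: you write $\D_\W\!\big(\tfrac{1}{f-a}\big)=\D_\W f$, which is false; what you need (and then correctly use) is $\D_\W(f-a)=\D_\W f$, so that $n_\W\big(r,\tfrac{1}{f-a}\big)$ indeed counts zeros of $\D_\W f$ at $x^{+}$ over the $a$-points $x$ of $f$.

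Second, your treatment of~(ii) is considerably more intricate than the paper's. Rather than tracking ``discarded'' zeros and doing a separate local bookkeeping at consecutive poles, the paper simply writes down the two identities
\[
\big(\D_\W(\tfrac{1}{f})\big)(x^{+})=\frac{-(\D_\W f)(x^{+})}{f(x)\,f(x^{++})}
\qquad\text{and}\qquad
(\D_\W f)(x^{+})=\frac{f(x^{++})-f(x)}{x^{++}-x},
\]
bounds $n_\W(r,f)\le n\big(r,1/(\D_\W(\tfrac1f))(x^{+})\big)$ and $\sum_{a}n_\W\big(r,\tfrac{1}{f-a}\big)\le n\big(r,1/(\D_\W f)(x^{+})\big)$ (both trivial), and then reads off directly from the first identity that
\[
n\Big(r,\tfrac{1}{(\D_\W(1/f))(x^{+})}\Big)+n\big(r,(\D_\W f)(x^{+})\big)\le n(r,f)+n\big(r,f(x^{++})\big),
\]
since every zero of the left-hand quotient and every pole of $(\D_\W f)(x^{+})$ is accounted for by a pole of $f$ at $x$ or at $x^{++}$. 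One application of Lemma~\ref{Estimate3} then finishes. This avoids your lattice-neighbour case analysis entirely; what you describe as the ``hard part'' is in fact a one-line consequence of the two displayed identities once you combine the terms rather than split them.
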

\begin{proof}
	The first inequality follows immediately from the definitions of the counting functions given in Definition~\ref{WilsonCount}.\ \ To prove the second inequality, we recall that $n_\W(r,f)$ counts all those zeros of $(\D_\W(\frac{1}{f}))(x^+)$ in $\overline{D(0;r)}$ which satisfy $\frac{1}{f}(x)=0$, while for each $a\in\mathbb{C}$, $n_\W(r,\frac{1}{f-a})$ counts all those zeros of $(\D_\W f)(x^+)$ in $\overline{D(0;r)}$ which satisfy $f(x)=a$.\ \ So we have
	\begin{align*}
		&\ \ \ \ n_\W(r,f) + \sum_{a\in\mathbb{C}}{n_\W\left(r,\frac{1}{f-a}\right)} \\
		&\le n\left(r,\frac{1}{(\D_\W(\frac{1}{f}))(x^+)}\right) + n\left(r,\frac{1}{(\D_\W f)(x^+)}\right) \\
		&\le n(r,f) + n(r,f(x^{++})) + n\left(r,\frac{1}{(\D_\W f)(x^+)}\right) - n(r,(\D_\W f)(x^+))
	\end{align*}
	as
	\[
		\left(\D_\W\left(\frac{1}{f}\right)\right)(x^+) = \frac{-(\D_\W f)(x^+)}{f(x)f(x^{++})} \hspace{20px} \mbox{and} \hspace{20px} (\D_\W f)(x^+) = \frac{f(x^{++})-f(x)}{x^{++}-x}.
	\]
	Therefore for every $\varepsilon>0$,
	\begin{align*}
		&\ \ \ \ N_\W(r,f) + \sum_{a\in\mathbb{C}}{N_\W\left(r,\frac{1}{f-a}\right)} \\
		&\le N(r,f) + N(r,f(x^{++})) + N\left(r,\frac{1}{(\D_\W f)(x^+)}\right) - N(r,(\D_\W f)(x^+)) \\
		&= 2N(r,f) + N\left(r,\frac{1}{\D_\W f}\right) - N(r,\D_\W f) + O(r^{\sigma-\frac{1}{2}+\varepsilon}) + O(\ln r) \\
		&= N_\W(r) + O(r^{\sigma-\frac{1}{2}+\varepsilon}) + O(\ln r)
	\end{align*}
	as $r\to\infty$, where the second step follows from Lemma~\ref{Estimate3}, as the order $\sigma$ of $f$ is greater than or equal to the order of $\frac{1}{\D_\W f}$.
\end{proof}

\pagebreak
\subsection{Proof of Theorem~\ref{NSFTWilson}}
\begin{proof}
	From Lemma~\ref{Estimate4} (i), we see that $N(r,\frac{1}{f-y_n})-\widetilde{N_\W}(r,\frac{1}{f-y_n}) \le N_\W(r,\frac{1}{f-y_n})$ for each $1\le n\le q$ and that $N(r,f)-\widetilde{N_\W}(r,f) \le N_\W(r,f)$, so together with Lemma~\ref{Estimate4} (ii) we get, for every $\varepsilon>0$,
	\begin{align}
		\label{C13}
		\begin{aligned}
			&\ \ \ \ [N(r,f)-\widetilde{N_\W}(r,f)] + \sum_{n=1}^q{\left[N\left(r,\frac{1}{f-y_n}\right)-\widetilde{N_\W}\left(r,\frac{1}{f-y_n}\right)\right]} \\
			&\le N_\W(r,f) + \sum_{n=1}^q{N_\W\left(r,\frac{1}{f-y_n}\right)} \le N_\W(r,f) + \sum_{a\in\mathbb{C}}{N_\W\left(r,\frac{1}{f-a}\right)} \\
			&\le N_\W(r) + O(r^{\sigma-\frac{1}{2}+\varepsilon}) + O(\ln r)
		\end{aligned}
	\end{align}
	as $r\to\infty$.\ \ Now, applying Nevanlinna's First Fundamental Theorem \eqref{B1} and this inequality \eqref{C13} to the result \eqref{C1} from Theorem~\ref{FundIneq}, we have
	\begin{align*}
		(q-1)T(r,f) &\le N(r,f)+\sum_{n=1}^{q}{N\left(r,\frac{1}{f-y_n}\right)} - N_\W(r) + O(r^{\sigma-\frac{1}{2}+\varepsilon}) + O(\ln r) \\
		&\le \widetilde{N_\W}(r,f)+\sum_{n=1}^{q}{\widetilde{N_\W}\left(r,\frac{1}{f-y_n}\right)} + O(r^{\sigma-\frac{1}{2}+\varepsilon}) + O(\ln r)
	\end{align*}
	as $r\to\infty$.
\end{proof}

\subsection{Defect relations under the Wilson operator, proofs of Corollary~\ref{Defect} and Theorem~\ref{WilsonPicard}}

We can apply Theorem~\ref{NSFTWilson} to give a Wilson analogue of Nevanlinna's Theorem on deficient values.\ \ We first recall that given a meromorphic function $f$, the Nevanlinna defect $\delta(a,f)$ of $f$ at $a\in\hat{\mathbb{C}}$ is defined as
\[
	\delta(a,f) := \liminf_{r\to\infty}\frac{m(r,a)}{T(r,f)} = 1-\limsup_{r\to\infty}\frac{N(r,a)}{T(r,f)},
\]
where we have used the short-hand notations
\[
	m(r,a) := m\left(r,\frac{1}{f-a}\right) \hspace{25px} \mbox{and} \hspace{25px} N(r,a) := N\left(r,\frac{1}{f-a}\right)
\]
for every $a\in\mathbb{C}$, and
\[
	m(r,\infty) := m(r,f) \hspace{25px} \mbox{and} \hspace{25px} N(r,\infty) := N(r,f),
\]
and where the equality after the definition follows from Nevanlinna's First Fundamental Theorem \eqref{B1}.\ \ Now we make the following natural definitions, which are the Wilson analogues of the ramification index $\vartheta(a,f)$ of $f$ at $a$ and the quantity $\Theta(a,f)$ in the classical Nevanlinna theory.

\begin{definition}
\label{WTheta}
	Let $f$ be a meromorphic function such that $f\notin\ker\D_\W$ and let $a\in\hat{\mathbb{C}}$.\ \ The \textbf{\textit{Wilson ramification index}} $\vartheta_\W(a,f)$ of $f$ at $a$ is defined as
	\[
		\vartheta_\W(a,f):=\liminf_{r\to\infty}\frac{N_\W(r,a)}{T(r,f)},
	\]
	and the quantity $\Theta_\W(a,f)$ of $f$ at $a$ is defined as
	\[
		\Theta_\W(a,f):= 1-\limsup_{r\to\infty}\frac{\widetilde{N_\W}(r,a)}{T(r,f)}.
	\]
\end{definition}

\bigskip
It is easy to see that we always have $0\le\Theta_\W(a,f)\le 1$.\ \ Now we prove Corollary~\ref{Defect}. \\

\noindent \textit{Proof of Corollary~\ref{Defect}.}\ \ The first inequality follows readily from the first part of Lemma~\ref{Estimate4}.\ \ Now from Theorem~\ref{NSFTWilson}, dividing both sides of \eqref{C12} by $T(r,f)$, we see that for any positive integer $q$, any finite sequence of points $\{y_1,y_2,\ldots,y_q\}$ in $\mathbb{C}$ and any $\varepsilon>0$,
	\[
		q-1 \le \frac{\widetilde{N_\W}(r,f)}{T(r,f)} + \sum_{n=1}^q{\frac{\widetilde{N_\W}(r,y_n)}{T(r,f)}} + O\left(\frac{r^{\sigma-\frac{1}{2}+\varepsilon}}{T(r,f)}\right) + O\left(\frac{\ln r}{T(r,f)}\right)
	\]
	as $r\to\infty$.\ \ Rearranging the terms, we then obtain
	\[
		\left[1 - \frac{\widetilde{N_\W}(r,f)}{T(r,f)}\right] + \sum_{n=1}^q{\left[1 - \frac{\widetilde{N_\W}(r,y_n)}{T(r,f)}\right]} \le 2 + O\left(\frac{r^{\sigma-\frac{1}{2}+\varepsilon}}{T(r,f)}\right) + O\left(\frac{\ln r}{T(r,f)}\right)
	\]
	as $r\to\infty$. Taking limit inferior on both sides as $r\to\infty$ and noting that $f$ is transcendental, we have
	\[
		\Theta_\W(\infty,f) + \sum_{n=1}^q{\Theta_\W(y_n,f)} \le 2.
	\]
	The rest of the corollary follows since $\{y_1,y_2,\ldots,y_q\}$ is an arbitrary sequence of points in $\mathbb{C}$. \hfill \qed \\

As a Wilson analogue of the Nevanlinna exceptional value, we make the following definition.
\begin{definition}
\label{ExVWilson}
	Let $f$ be a meromorphic function and $a$ be an extended complex number.
	\begin{enumerate}[(i)]
		\item We say that $f$ has a \textbf{\textit{Wilson $\boldsymbol{a}$-sequence}} starting at $x_0\in\mathbb{C}$ if for every non-negative integer $p$, $f$ has an $a$-point of multiplicity $m_p>0$ at $x_0^{+(2p)}$, where
		\begin{itemize}
			\item The sequence $\{m_p\}_{p=0}^\infty$ is monotonically increasing, i.e. $0<m_0\le m_1\le m_2\le \cdots$; and
			\item In case $x_0^{--}\ne x_0$ (i.e. $x_0\notin[-\frac{1}{4},0]$), we also have either $f(x_0^{--})\ne a$ or $f(x_0^{--})=a$ with multiplicity $m_{-1}>m_0$.
		\end{itemize}
		This Wilson $a$-sequence is given by $\{x_0^{+(2p)}\}_{p=0}^\infty$.
		\item We say that $a$ is a \textbf{\textit{Wilson exceptional value}} of $f$ if all but at most finitely many $a$-points of $f$ are in its Wilson $a$-sequences.
	\end{enumerate}
\end{definition}
We should note that the Wilson exceptional values of a function is a global property of the function.\ \ Moreover, if $f$ is a rational function, i.e. $f$ has finitely many poles, then $\infty$ is automatically a Wilson exceptional value of $f$.

\pagebreak
\begin{example}
Here is an example that illustrates Definition~\ref{ExVWilson}.\ \ Let $f$ be a meromorphic function having poles at the points as shown in the following figure, with multiplicities indicated in boxes.\ \ The dots indicate that the subsequent poles have increasing multiplicities.
\end{example}

\begin{center}
\begin{tikzpicture}[scale=0.5]
	\draw[dotted] (-13,0) -- (0,0);
	\draw[->] (0,0) -- (11,0) node[right] {$\mathfrak{R}$};
	\draw[->] (0,-9.5) -- (0,10.5) node[above] {$\mathfrak{I}$};

	\draw[fill] (0,0) circle[radius=4pt] node[above right] {$x_0$} node[below right] {\fbox{1}};
	\draw[fill=white] (-1,0) circle[radius=4pt] node[above] {$x_0^{++}$} node[below] {\fbox{1}};
	\draw[fill=white] (-4,0) circle[radius=4pt] node[above] {$x_0^{+(4)}$} node[below] {\fbox{2}};
	\draw[fill=white] (-9,0) circle[radius=4pt] node[above] {$x_0^{+(6)}$} node[below] {\fbox{3}};
	\draw[fill=white] (-12,0) circle[radius=0pt] node[above] {$\cdots$} node[below] {$\cdots$};

	\draw[dotted, rotate=90] (-9,5.35015625) parabola bend (0,-2.56) (9,5.35015625);
	\draw[fill] (2.07,-2.24) circle[radius=4pt] node[right] {$x_1$} node[left] {\fbox{1}};
	\draw[fill=white] (2.47,0.96) circle[radius=4pt] node[right] {$x_1^{++}$} node[left] {\fbox{3}};
	\draw[fill=white] (-2.73,7.36) circle[radius=4pt] node[above right] {$x_1^{+(6)}$} node[below left] {\fbox{4}};
	\draw[fill=white] (-5.35015625,9) circle[radius=0pt] node[above right] {$\ddots$} node[below] {$\ddots$};

	\draw[dotted, rotate=90] (-9,-5.257092) parabola bend (0,-7.84) (9,-5.257092);
	\draw[fill] (5.59,-8.4) circle[radius=4pt] node[right] {$x_2$} node[left] {\fbox{1}};
	\draw[fill=white] (7.59,-2.8) circle[radius=4pt] node[right] {$x_2^{++}$} node[left] {\fbox{5}};
	\draw[fill=white] (7.59,2.8) circle[radius=4pt] node[right] {$x_2^{+(4)}$} node[left] {\fbox{5}};
	\draw[fill=white] (5.59,8.4) circle[radius=4pt] node[right] {$x_2^{+(6)}$} node[left] {\fbox{2}};
	\draw[fill=white] (4.8,9.6) circle[radius=0pt] node[right] {$\ddots$} node[left] {$\ddots$};
\end{tikzpicture}
\end{center}

From this figure, we see that $f$ has Wilson pole sequences starting at the points $x_0$, $x_1^{+(6)}$ and $x_2^{+(6)}$.\ \ (Note that $x_0^{--}=x_0$.)\ \ Since all but five poles of $f$ are contained in these Wilson pole sequences, it follows that $\infty$ is a Wilson exceptional value of $f$. \\

\begin{remark}
\label{ExVWilsonRmk}
	In fact, for any Wilson exceptional value $a$ of a transcendental meromorphic function $f$, we have $\Theta_\W(a,f)=1$.\ \ This is because each $a$-point $x_0$ of $f$ in a Wilson $a$-sequence has the same multiplicity as that of the zero of $\D_\W f$ at $x_0^+$, which means that each Wilson $a$-sequence of $f$ gives no contribution to $\widetilde{n_\W}(r,a)$.\ \ Now $f$ has only finitely many $a$-points outside its Wilson $a$-sequences, so $\widetilde{n_\W}(r,a)$ is bounded.\ \ Therefore $\widetilde{N_\W}(r,a)=O(\ln r)=o(T(r,f))$ as $r\to\infty$, and so $\Theta_\W(a,f) = 1$.
\end{remark}

\pagebreak
The following is the proof of Theorem~\ref{WilsonPicard}. \\

\noindent \textit{Proof of Theorem~\ref{WilsonPicard}.}\ \ Let $f$ be a meromorphic function and $a_1,a_2,a_3\in\hat{\mathbb{C}}$ be three distinct Wilson exceptional values of $f$.\ \ If $f$ is rational, then we are done.\ \ If $f$ is transcendental, then
	\[
		\Theta_\W(a_1,f) = \Theta_\W(a_2,f) = \Theta_\W(a_3,f) = 1
	\]
	as in the above remark.\ \ This implies that
	\[
		\sum_{i=1}^3{\Theta_\W(a_i,f)} = 3 > 2,
	\]
	which contradicts the result of Corollary~\ref{Defect}, so $f\in\ker\D_\W$. \hfill \qed \\

Theorem~\ref{WilsonPicard} readily implies the following corollary, which is a Wilson analogue of Liouville's theorem in classical complex function theory.

\begin{corollary}
	Let $f$ be a meromorphic function of finite order.\ \ If for every $a\in\hat{\mathbb{C}}$ outside a sufficiently large disk, the set
	\[
		\{x\in f^{-1}(\{a\}): \mbox{$a$-point multiplicity of $f$ at $x^{++}$}<\mbox{$a$-point multiplicity of $f$ at $x$} \}
	\]
	is finite, then either $f\in\ker{\D_\W}$ or $f$ is rational.
\end{corollary}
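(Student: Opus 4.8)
The plan is to show that the stated hypothesis forces every sufficiently large $a \in \hat{\mathbb{C}}$ to be a Wilson exceptional value of $f$, and then to invoke Theorem~\ref{WilsonPicard}. First I would unpack what it means for the displayed set to be finite. For all but finitely many $a$-points $x$ of $f$, the $a$-point multiplicity at $x^{++}$ is at least the $a$-point multiplicity at $x$; iterating, for all but finitely many $a$-points $x$ the multiplicities along the forward chain $x, x^{++}, x^{+(4)}, \ldots$ are monotonically increasing. This is exactly the first bulleted condition in the definition of a Wilson $a$-sequence (Definition~\ref{ExVWilson}). One then has to deal with the second bullet — the condition at $x_0^{--}$ — but this is a matter of choosing the \emph{starting point} of each chain correctly: given a forward-increasing chain, walk backwards along $\cdots, x^{+(-4)}, x^{--}, x$ until either the multiplicity strictly drops or one exits $f^{-1}(\{a\})$, and take that as $x_0$ (the chain must eventually terminate backwards since $x^{--}$ leaves $\overline{D(0;r)}$ for $|x|$ large, or one reaches the real segment $[-\frac14,0]$ where $x_0^{--}=x_0$). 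Hence, apart from finitely many exceptions, every $a$-point of $f$ lies in some Wilson $a$-sequence, so $a$ is a Wilson exceptional value of $f$.

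Next, since by hypothesis this holds for \emph{every} $a \in \hat{\mathbb{C}}$ outside a sufficiently large disk, in particular we may pick three distinct such values $a_1, a_2, a_3$. These are three distinct Wilson exceptional values of $f$, so Theorem~\ref{WilsonPicard} applies directly (the hypothesis that $f$ has finite order is exactly the hypothesis of that theorem), yielding the conclusion that $f \in \ker \D_\W$ or $f$ is rational.

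The main obstacle I anticipate is the bookkeeping around the starting points of the Wilson sequences — making sure that the finitely many ``bad'' $a$-points (where monotonicity fails) do not obstruct the decomposition of the remaining $a$-points into valid Wilson $a$-sequences, and handling the degenerate case $x_0 \in [-\frac14, 0]$ where $x_0^{--} = x_0$ separately. One should also be slightly careful that the indexing $x_0^{+(2p)}$ used in Definition~\ref{ExVWilson} matches the chain $x, x^{++}, x^{+(4)}, \ldots$ (since $x^{++} = x^{+(2)}$), and that a point can be forced to start a \emph{new} sequence precisely when the backward step causes a strict drop in multiplicity or leaves $f^{-1}(\{a\})$; but all of this is routine once the correspondence between the hypothesis and the two bulleted conditions is made explicit. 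No new estimates are needed — the analytic content is entirely contained in Theorem~\ref{WilsonPicard} and hence ultimately in Corollary~\ref{Defect}.
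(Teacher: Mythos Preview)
Your approach---reduce to Theorem~\ref{WilsonPicard} by exhibiting three Wilson exceptional values---is exactly what the paper intends (it offers no proof beyond ``readily implies''), and the decomposition of the hypothesis into the two bulleted conditions of Definition~\ref{ExVWilson} is the right idea.

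There is, however, a small gap in your justification that the backward walk terminates. The fact that $|x^{-(2k)}|\to\infty$ does not by itself force termination: nothing prevents every $x^{-(2k)}$ from being an $a$-point with nondecreasing forward multiplicities (take for instance an entire $f$ with simple zeros exactly at $(b+ki)^2$ for \emph{all} $k\in\mathbb{Z}$). On such a doubly-infinite chain no point can serve as the starting point $x_0$ of a Wilson $a$-sequence, so the hypothesis of the corollary does not literally force $a$ to be a Wilson exceptional value in the sense of Definition~\ref{ExVWilson}.

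The clean fix is to bypass the packaging into Wilson $a$-sequences and argue $\Theta_\W(a,f)=1$ directly, which is all that the proof of Theorem~\ref{WilsonPicard} actually uses. If $x$ is an $a$-point not in the finite bad set $B_a$, then the $a$-multiplicity $m'$ at $x^{++}$ is at least the $a$-multiplicity $m$ at $x$, so
\[
(\D_\W f)(x^+)=\frac{f(x^{++})-f(x)}{x^{++}-x}
\]
vanishes to order at least $m$ at $x^+$, and hence $x$ contributes $0$ to $\widetilde{n_\W}\big(r,\frac{1}{f-a}\big)$. Thus $\widetilde{n_\W}\big(r,\frac{1}{f-a}\big)$ is bounded by the total contribution of $B_a$, giving $\widetilde{N_\W}\big(r,\frac{1}{f-a}\big)=O(\ln r)$ and $\Theta_\W(a,f)=1$ for every $a$ outside the large disk. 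Now Corollary~\ref{Defect} (equivalently, the proof of Theorem~\ref{WilsonPicard}) finishes the argument. This is precisely the computation in Remark~\ref{ExVWilsonRmk}, stripped of the starting-point bookkeeping that was causing trouble.
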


\bigskip
We conclude this section by giving some examples.
\begin{example}
\label{defectegs}
	Here are examples on different values of $\Theta_\W(a,f)$.
	\begin{enumerate}[(i)]
	\item Let $b\in\mathbb{C}\setminus(i\mathbb{N}_0)$, and $f:\mathbb{C}\to\mathbb{C}$ be the infinite product
		\[
			f(x):=\prod_{k=0}^{\infty}{\left[1-\frac{x}{(b+ki)^2}\right]}=\frac{[\Gamma(-ib)]^2}{\Gamma(-ib+i\sqrt{x})\Gamma(-ib-i\sqrt{x})}.
		\]
		Since $f$ has a simple zero at $(b+ki)^2$ for every $k\in\mathbb{N}_0$, $0$ is a Wilson exceptional value of $f$, and so $\Theta_\W(0,f)=1$.\ \ Since $f$ is entire, we also have $\Theta_\W(\infty,f)=1$, so the equality $\sum\Theta_\W=2$ is attained.\ \ Note that by Gauss' formula \cite[\S 1.3]{Bailey} we have
		\[
			f(x) = \,_2F_1\left(
			\begin{matrix}
			\begin{matrix}
				i\sqrt{x}, & -i\sqrt{x}
			\end{matrix} \\
				-ib
			\end{matrix}
			\ ;\ 1
			\right)
		\]
		if $\mathfrak{I}b>0$.\ \ Replacing $i$ by any $c\in\mathbb{C}\setminus\{0\}$, we see that the function $f:\mathbb{C}\to\mathbb{C}$ defined by
		\begin{align*}
			f(x) &:=\prod_{k=0}^{\infty}{\left[1-\frac{x}{(b+ck)^2}\right]}=\frac{[\Gamma(\frac{b}{c})]^2}{\Gamma(\frac{b}{c}+\frac{1}{c}\sqrt{x})\Gamma(\frac{b}{c}-\frac{1}{c}\sqrt{x})} \\
			&= \,_2F_1\left(
			\begin{matrix}
			\begin{matrix}
				\frac{1}{c}\sqrt x, & -\frac{1}{c}\sqrt x
			\end{matrix} \\
				\frac{b}{c}
			\end{matrix}
			\ ;\ 1
			\right)
		\end{align*}
		satisfies that $\Theta_{\W,2}(\infty,f)=\Theta_{\W,2}(0,f)=1$.\ \ Here $\Theta_{\W,c}(a,f)$ is the analogous quantity of $f$ at $a$ associated to the \textit{$c$-shift Wilson operator} $\D_{\W,c}$ to be defined in \S\ref{sec:Ptwise}.\ \ In particular, this is true for the function $f(x)=\cos{\frac{\pi}{2}\sqrt{x}}$, to which we have substituted $b=1$ and $c=2$.
	\item Consider the generating function $\varphi(x;t)$ of Wilson polynomials $W_n(x)\equiv W_n(x;a,b,c,d)$ \cite{Wilson2} \cite{Koekoek-Swarttouw} given by
		\[
			\varphi(x;t) := \,_2F_1\left(
			\begin{matrix}
			\begin{matrix}
				a+i\sqrt{x}, & b+i\sqrt{x}
			\end{matrix} \\
				a+b
			\end{matrix}
			\ ;\ t
			\right)\,_2F_1\left(
			\begin{matrix}
			\begin{matrix}
				c-i\sqrt{x}, & d-i\sqrt{x}
			\end{matrix} \\
				c+d
			\end{matrix}
			\ ;\ t
			\right)
		\]
		When $a=c\notin\{\frac{1}{2}-k:k\in\mathbb{N}\}$ and $b=d=\frac{1}{2}$, we can evaluate $\varphi(x;t)$ at $t=-1$ using Kummer's formula \cite[\S 2.3]{Bailey} to obtain
		\begin{align*}
			\varphi(x) &= \,_2F_1\left(
			\begin{matrix}
			\begin{matrix}
				a+i\sqrt{x}, & \frac{1}{2}+i\sqrt{x}
			\end{matrix} \\
				a+\frac{1}{2}
			\end{matrix}
			\ ;\ -1
			\right)\,_2F_1\left(
			\begin{matrix}
			\begin{matrix}
				a-i\sqrt{x}, & \frac{1}{2}-i\sqrt{x}
			\end{matrix} \\
				a+\frac{1}{2}
			\end{matrix}
			\ ;\ -1
			\right) \\
			&= \frac{\Gamma(a+\frac{1}{2})\Gamma(a+\frac{1}{2})\Gamma(1+\frac{a}{2}+\frac{i}{2}\sqrt{x})\Gamma(1+\frac{a}{2}-\frac{i}{2}\sqrt{x})}{\Gamma(1+a+i\sqrt{x})\Gamma(1+a-i\sqrt{x})\Gamma(\frac{1}{2}+\frac{a}{2}+\frac{i}{2}\sqrt{x})\Gamma(\frac{1}{2}+\frac{a}{2}-\frac{i}{2}\sqrt{x})}.
		\end{align*}
		This function $\varphi$ has double zeros at $\{-(a-1+2k)^2:k\in\mathbb{N}\}$ and has no other zeros or poles.\ \ We have $\Theta_{\W,2i}(0,\varphi)=\Theta_{\W,2i}(\infty,\varphi)=1$.
	\end{enumerate}
	The next two examples make use of the infinite product construction as in the first example.
	\begin{enumerate}[(i)]
	\setcounter{enumi}{2}
	\item Let $g:\mathbb{C}\to\mathbb{C}$ be the infinite product
		\[
			g(x):=\prod_{k=1}^{\infty}{\left[1-\frac{x}{(2ki)^2}\right]^2\left[1-\frac{x}{((2k-1)i)^2}\right]}.
		\]
		Then we have $n(r,0)\sim\frac{3}{2}\sqrt r$ and $\widetilde{n_\W}(r,0)\sim\frac{1}{2}\sqrt r$ as $r\to\infty$, so
		\[
			\widetilde{N_\W}(r,0) \sim \sqrt r
		\]
		as $r\to\infty$.\ \ On the other hand, it is easy to see that for every $\theta\in(-\pi,\pi)$,
		\[
			\ln |g(re^{i\theta})| = \frac{3\pi}{2}\sqrt r\cos\frac{\theta}{2}
		\]
		as $r\to\infty$, so by Lebesgue Dominated Convergence Theorem we have
		\[
			T(r,g) = m(r,g) \sim \frac{1}{2\pi}\frac{3\pi}{2}\sqrt r\int_{-\pi}^\pi{\cos\frac{\theta}{2}\,d\theta} = 3\sqrt r
		\]
		as $n\to\infty$.\ \ Therefore $\Theta_\W(0,g)=\frac{2}{3}$. \\
		More generally, given any rational number $s\in[0,1]$, there exist non-negative integers $p\ge q$ such that $s=\frac{2q}{p+q}$.\ \ If we let $g:\mathbb{C}\to\mathbb{C}$ be the infinite product
		\[
			g(x):=\prod_{k=1}^{\infty}{\left[1-\frac{x}{(2ki)^2}\right]^p\left[1-\frac{x}{((2k-1)i)^2}\right]^q},
		\]
		then we have $\Theta_\W(0,g)=s$.
	\item Given any $s\in[0,1]$, there exists a sequence $\{a_k\}_{k=0}^\infty$ of integers such that $a_0=0$, $a_{k+1}-a_k\in\{0,1\}$ for every $k\in\mathbb{N}_0$, and $\displaystyle\lim_{k\to\infty}\frac{a_k}{k}=1-s$.\ \ If we let $h:\mathbb{C}\to\mathbb{C}$ be the infinite product
		\[
			h(x):=\prod_{k=1}^{\infty}{\left[1-\frac{x}{((k+a_k)i)^2}\right]},
		\]
		then by a similar argument we obtain $\Theta_\W(0,h)=s$.
	\end{enumerate}
\end{example}

\subsection{Unicity theorem}

As in the classical Nevanlinna theory, Theorem~\ref{NSFTWilson} also gives us a Wilson analogue to Nevanlinna's five-value theorem.\ \ To state this theorem we first need to define what it means by two functions ``sharing" a value in the Wilson sense.

\begin{definition}
	Let $f$ and $g$ be meromorphic functions of finite order such that $f,g\notin\ker\D_\W$ and let $a\in\hat{\mathbb{C}}$.\ \ We say that $f$ and $g$ \textbf{\textit{share}} the value $a$ \textbf{\textit{in the Wilson sense}} if, as $r\to\infty$,
	\[
		\widetilde{n_\W}(r,f,a)-\widetilde{n_\W}(r,g,a)=O(1),
	\]
	or equivalently,
	\[
		\widetilde{N_\W}(r,f,a)-\widetilde{N_\W}(r,g,a)=O(\ln r).
	\]
\end{definition}

\begin{theorem}
\label{N5Wilson}
\textup{(Nevanlinna's five-value theorem for the Wilson operator)}
	Let $f$ and $g$ be meromorphic functions of finite order such that $f,g\notin\ker\D_\W$.\ \ If $f$ and $g$ share five distinct values $a_1, a_2, a_3, a_4, a_5\in\hat{\mathbb{C}}$ in the Wilson sense, then either $f\equiv g$ or $f$ and $g$ are both rational.
\end{theorem}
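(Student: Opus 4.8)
The plan is to imitate the classical proof of Nevanlinna's five-value theorem, replacing the classical Second Fundamental Theorem by its Wilson analogue (Theorem~\ref{NSFTWilson}) and the distinct-value counting function $\overline{N}$ by $\widetilde{N_\W}$. So suppose, for contradiction, that $f\not\equiv g$; I will show that then $f$ and $g$ must both be rational, which together with the excluded possibility $f\equiv g$ gives the assertion. Let $\sigma<\infty$ denote the larger of the orders of $f$ and $g$; note that $f,g\notin\ker\D_\W$, so Theorem~\ref{NSFTWilson} applies to each of them.

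First I would apply Theorem~\ref{NSFTWilson} to $f$ with the five shared values $a_1,\dots,a_5$. If one of them, say $a_5$, equals $\infty$, take $q=4$ so that the term $\widetilde{N_\W}(r,f)$ on the right of \eqref{C12} accounts for $a_5$; if all $a_j$ are finite, take $q=5$ and absorb the stray term via $\widetilde{N_\W}(r,f)\le N(r,f)\le T(r,f)$. Either way one obtains, for every $\varepsilon>0$,
\[
	3T(r,f)\le\sum_{j=1}^{5}\widetilde{N_\W}\!\left(r,\tfrac{1}{f-a_j}\right)+O(r^{\sigma-\frac12+\varepsilon})+O(\ln r)
\]
as $r\to\infty$ (where the summand for the index with $a_j=\infty$, if present, is read as $\widetilde{N_\W}(r,f)$), and the same inequality with $f$ replaced by $g$.

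Next comes the crucial bridging estimate
\[
	\sum_{j=1}^{5}\widetilde{N_\W}\!\left(r,\tfrac{1}{f-a_j}\right)\le T(r,f)+T(r,g)+O(\ln r).
\]
The mechanism is that each $a_j$-point $x$ of $f$ carrying a positive weight in $\widetilde{n_\W}(r,\tfrac{1}{f-a_j})$ is — with at most finitely many exceptions, this being where the Wilson-sharing hypothesis is invoked — also an $a_j$-point of $g$, hence a zero of $f-g$; for distinct $j$ these zeros are distinct, and the weight assigned to $x$ by $\widetilde{n_\W}$ does not exceed the order of that zero of $f-g$. Thus $\sum_{j}\widetilde{n_\W}(r,\tfrac{1}{f-a_j})\le n(r,\tfrac{1}{f-g})+O(1)$, and integrating and applying Nevanlinna's First Fundamental Theorem \eqref{B1} together with $T(r,f-g)\le T(r,f)+T(r,g)+O(1)$ gives the displayed bound; symmetrically for $g$. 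Substituting into the two Second-Fundamental-Theorem inequalities and adding yields $3(T(r,f)+T(r,g))\le 2(T(r,f)+T(r,g))+O(r^{\sigma-\frac12+\varepsilon})+O(\ln r)$, i.e. $T(r,f)+T(r,g)=O(r^{\sigma-\frac12+\varepsilon})+O(\ln r)$ for every $\varepsilon>0$. Were either of $f,g$ transcendental this would be absurd: choosing $\varepsilon$ small and passing to a sequence $r_n\to\infty$ along which $T(r_n,\cdot)$ attains its order (so $T(r_n,\cdot)$ eventually exceeds both $r_n^{\sigma-\frac12+\varepsilon}$ and $\ln r_n$), the left side outgrows the right. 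Hence $f$ and $g$ are both rational.

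The main obstacle is the bridging estimate. In the classical theorem "sharing a value (ignoring multiplicity)" literally means the two $a_j$-preimage sets coincide, so every counted $a_j$-point is automatically a common one; here the Wilson-sharing hypothesis only says that the counting functions $\widetilde{n_\W}(r,\tfrac{1}{f-a_j})$ and $\widetilde{n_\W}(r,\tfrac{1}{g-a_j})$ agree up to $O(1)$. To convert this into the needed statement about the points that actually produce zeros of $f-g$ — and to verify that the $\widetilde{n_\W}$-weight does not overshoot the corresponding zero order of $f-g$ — one must unwind Definition~\ref{WilsonCount} in tandem with the Wilson-sequence bookkeeping of Definition~\ref{ExVWilson}: a positive contribution of $x$ to $\widetilde{n_\W}(r,\tfrac{1}{f-a_j})$ records $x$ as the terminal point of a maximal non-decreasing run of $a_j$-point multiplicities of $f$ along the chain $\{x^{+(2p)}\}_p$, and one has to argue that the sharing forces a matching run for $g$ at the same points up to an overall $\widetilde{N_\W}$-discrepancy of size $O(\ln r)$. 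A secondary, routine matter is disposing of the $O(r^{\sigma-\frac12+\varepsilon})$ term without an exceptional set, handled as above by passing to a sequence realising the order.
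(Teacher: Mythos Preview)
Your overall strategy mirrors the paper's exactly: apply Theorem~\ref{NSFTWilson} to each of $f$ and $g$ with the five shared values, bound $\sum_j\widetilde{N_\W}(r,\tfrac{1}{f-a_j})$ in terms of $T(r,\tfrac{1}{f-g})\le T(r,f)+T(r,g)+O(1)$, and derive a contradiction when both functions are transcendental. The paper phrases the bridge as $\sum_j N_j(r_n)\le\widetilde{N_\W}(r_n,\tfrac{1}{f-g})$ along a subsequence $\{r_n\}$ chosen to absorb the $O(r^{\sigma-\frac12+\varepsilon})$ error, and runs the chain of inequalities in a slightly different order, but structurally the two arguments coincide.

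The genuine gap is in your justification of the bridging estimate. You assert that an $a_j$-point $x$ of $f$ carrying positive $\widetilde{n_\W}$-weight must, with at most finitely many exceptions, also be an $a_j$-point of $g$ and hence a zero of $f-g$. But the Wilson-sharing hypothesis, as defined just before Theorem~\ref{N5Wilson}, carries no pointwise information whatsoever: it says only that the two counting functions $\widetilde{n_\W}(r,\tfrac{1}{f-a_j})$ and $\widetilde{n_\W}(r,\tfrac{1}{g-a_j})$ differ by $O(1)$, and nothing prevents $f$ and $g$ from having entirely disjoint $a_j$-point sets whose Wilson counts happen to agree. Your proposed unwinding of Definition~\ref{WilsonCount} and the Wilson-sequence bookkeeping of Definition~\ref{ExVWilson} does not rescue this, because that analysis only describes \emph{where} the $\widetilde{n_\W}$-weight of $f$ sits, not that those points are shared with $g$. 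The paper itself asserts the analogous inequality with no argument at all (writing only ``on the other hand, we also have\ldots'' and invoking similarity to Hayman's classical proof, in which sharing \emph{is} literally pointwise), so the difficulty is inherited from the paper's exposition rather than introduced by you; but as written, your bridging step is not established, and you have correctly located the obstacle without yet overcoming it.
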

\begin{proof}
	The proof is similar to the classical one in Hayman \cite{Hayman}.\ \ From the assumption, we have
	\[
		N_j(r) := \widetilde{N_\W}(r,f,a_j) = \widetilde{N_\W}(r,g,a_j) + O(\ln r)
	\]
	as $r\to\infty$ for each $j=1, 2, 3, 4, 5$.\ \ If one of $f$ and $g$ is rational, then so is the other because $\Theta_\W(a_j,f)=\Theta_\W(a_j,g)$ for $j=1, 2, 3, 4, 5$.\ \ If $f$ and $g$ are both transcendental, then by Theorem~\ref{NSFTWilson}, no matter $\infty$ is one of the $a_j$'s or not, we have for every $\varepsilon>0$,
	\begin{align*}
		3T(r,f) &\le \sum_{j=1}^{5}{\widetilde{N_\W}\left(r,f,a_j\right)} + O(r^{\sigma_f-\frac{1}{2}+\varepsilon}) + O(\ln r) \\
		3T(r,g) &\le \sum_{j=1}^{5}{\widetilde{N_\W}\left(r,g,a_j\right)} + O(r^{\sigma_g-\frac{1}{2}+\varepsilon}) + O(\ln r)
	\end{align*}
	as $r\to\infty$.\ \ As a result, there exists a sequence of positive real numbers $\{r_n\}_{n=1}^\infty$ strictly increasing to infinity such that
	\[
		(3+o(1))T(r_n,\, f) \le \sum_{j=1}^{5}{N_j(r_n)}\hspace{20px}\mbox{and}\hspace{20px}(3+o(1))T(r_n,\, g) \le \sum_{j=1}^{5}{N_j(r_n)}
	\]
	as $n\to\infty$.\ \ Now if we suppose on the contrary that $f\not\equiv g$, then we obtain
	\begin{align*}
		T\left(r_n,\frac{1}{f-g}\right) &= T(r_n, f-g) + O(1) \\
		&\le T(r_n,f) + T(r_n,g) + O(1) \\
		&\le \left(\frac{2}{3}+o(1)\right)\sum_{j=1}^{5}{N_j(r_n)}
	\end{align*}
	as $n\to\infty$.\ \ On the other hand, we also have
	\[
		\sum_{j=1}^{5}{N_j(r_n)} \le \widetilde{N_\W}\left(r_n,\frac{1}{f-g}\right) \le T\left(r_n,\frac{1}{f-g}\right) + O(1)
	\]
	as $n\to\infty$.\ \ These inequalities imply that
	\[
		\sum_{j=1}^{5}{N_j(r_n)} = O(1)
	\]
	as $n\to\infty$, which is impossible since $f$ and $g$ are transcendental and $f,g\notin\ker\D_\W$.
\end{proof}

\section{Applications in Wilson Difference Equations and Wilson Interpolation Equations}
\label{sec:Applications}

In addition to the Wilson analogue of Nevanlinna's Second Fundamental Theorem and defect relations, we can also derive from the lemma on logarithmic Wilson difference some useful results about Wilson difference equations and Wilson interpolation equations.\ \ An (ordinary) Wilson difference equation is an equation involving an unknown complex function and its Wilson differences, i.e. an equation of the form
\[
	F(x, y, \D_\W y, \D_\W^2 y, \D_\W^3 y, \ldots) = 0.
\]

An (ordinary) Wilson interpolation equation is, on the other hand, an equation involving an unknown complex function and its Wilson shifts, i.e. an equation of the form
\[
	F(x, y(x), y(x^+), y(x^-), y(x^{++}), y(x^{--}), \ldots) = 0.
\]

\subsection{Wilson-Sturm-Liouville operator}
It is well-known \cite{Koekoek-Swarttouw} that the Wilson polynomial $y(z)=W_n(z^2;a,b,c,d)$ of degree $n$ (in the variable $z^2$) satisfies the second order difference equation
\begin{align*}
	n(n+a+b+c+d-1)y(z) = \ &\frac{(a-iz)(b-iz)(c-iz)(d-iz)}{(2iz)(2iz-1)}[y(z+i)-y(z)] \\
	&- \frac{(a+iz)(b+iz)(c+iz)(d+iz)}{(2iz)(2iz+1)}[y(z)-y(z-i)],
\end{align*}
which can be written into the form
\begin{align}
	\label{C13a}
	\begin{aligned}
		&\D_\W\left(\mu\left(x;a+\frac{1}{2},b+\frac{1}{2},c+\frac{1}{2},d+\frac{1}{2}\right)\D_\W W_n\right)(x) \\
		&\qquad + n(n+a+b+c+d-1)\mu(x;a,b,c,d)W_n(x)=0,
	\end{aligned}
\end{align}
where
\[
	\mu(x;a,b,c,d):=\frac{\Gamma(a-iz)\cdots\Gamma(d-iz)\Gamma(a+iz)\cdots\Gamma(d+iz)}{\Gamma(2iz)\Gamma(-2iz)}
\]
is a weight function for the orthogonality of Wilson polynomials $W_n(x)=W_n(z^2)$, provided that the sum of any two (not necessarily distinct) elements in $\{a,b,c,d\}$ is not a non-positive integer \cite{Wilson}.\ \ In fact, Wilson polynomials $W_n$ are at the highest level in the Askey scheme of hypergeometric orthogonal polynomials \cite{Koekoek-Swarttouw}, and many hypergeometric orthogonal polynomials which occur in physics can be obtained from $W_n$ by taking limits of the parameters $a,b,c,d$. \\

Wilson polynomials themselves also occur naturally in physics.\ \ In investigating the Lorentz transformation properties of the parity operator $\mathcal{P}$, Bender, Meisinger and Wang arrived at an equation \cite[(24)]{Bender-Meisinger-Wang}, which can be shown to reduce to a (second-order) Wilson difference equation
\begin{align}
	\label{C14b}
	\begin{aligned}
		&\D_{\W,1}\left(\mu_1\left(x;a+\frac{1}{2},b+\frac{1}{2},c+\frac{1}{2},d+\frac{1}{2}\right)\D_{\W,1}g\right)(x) \\
		&\qquad +(1-l_1^2)\mu_1(x;a,b,c,d)g(x)=0,
	\end{aligned}
\end{align}
where $a,b,c,d$ are complex constants (in fact $a=\frac{1}{2}$, $b\in\mathbb{R}$ and $c=\overline{d}$), $\D_{\W,1}$ is the \textit{$1$-shift Wilson operator} to be defined in \S\ref{sec:Ptwise}, and
\[
	\mu_1(x;a,b,c,d):=\frac{\Gamma(a-z)\cdots\Gamma(d-z)\Gamma(a+z)\cdots\Gamma(d+z)}{\Gamma(2z)\Gamma(-2z)}
\]
is also a weight function for the orthogonality of Wilson polynomials, provided that the sum of any two (not necessarily distinct) elements in $\{a,b,c,d\}$ is not a non-positive integer \cite{Wilson}.\ \ This Wilson difference equation \eqref{C14b} has eigenvalues $l_1(n)=2n+1$, where $n\in\mathbb{N}_0$, and the corresponding eigensolutions are $g_0(x)=1$ and
\[
	g_n(x) = \left(x-\frac{1}{4}\right)\frac{(n+1)!}{(2n)!}\,W_{n-1}\left(-x;\frac{1}{2},\frac{1}{2},\frac{3}{2},\frac{3}{2}\right), \hspace{25px} \mbox{for $n\in\mathbb{N}$.}
\]

\bigskip
The operator
\[
	\L_\W:=\frac{1}{\mu(x;a,b,c,d)}\D_\W\mu\left(x;a+\frac{1}{2},b+\frac{1}{2},c+\frac{1}{2},d+\frac{1}{2}\right)\D_\W
\]
is a Wilson analogue of the Sturm-Liouville operator.\ \ Using this notation, \eqref{C13a} becomes
\[
	\L_\W W_n = -n(n+a+b+c+d-1)W_n,
\]
so $W_n$ is an eigenfunction of $\L_\W$ corresponding to the eigenvalue $\lambda_n=-n(n+a+b+c+d-1)$. \\

To investigate Wilson difference equations of the form $\L_\W y=\phi y$, we first obtain the following using the lemma on logarithmic Wilson difference.
\begin{lemma}
If $f\not\equiv0$ is a meromorphic function of finite order $\sigma$, then for every $\varepsilon>0$,
\[
	m\left(r,\frac{\L_\W f}{f}\right) = O(r^{\max\{0,\sigma-\frac{1}{2}\}+\varepsilon})
\]
as $r\to\infty$.
\end{lemma}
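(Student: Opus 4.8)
The plan is to reduce the estimate to the lemma on logarithmic Wilson difference (Theorem~\ref{WilsonLogLemma}) by factoring $\L_\W f/f$. Write $\tilde\mu(x):=\mu\!\left(x;a+\tfrac12,b+\tfrac12,c+\tfrac12,d+\tfrac12\right)$ and set $h:=\tilde\mu\,\D_\W f$, which is meromorphic by Proposition~\ref{Mero}(ii); then $\L_\W f=\tfrac1\mu\,\D_\W h$ is meromorphic, and as long as $\D_\W f\not\equiv0$ one has
\[
	\frac{\L_\W f}{f}=\frac{\D_\W h}{\mu f}=\frac{\D_\W h}{h}\cdot\frac{\tilde\mu}{\mu}\cdot\frac{\D_\W f}{f}.
\]
(If $\D_\W f\equiv0$ then $\L_\W f\equiv0$ and there is nothing to prove.) Applying $m(r,\cdot)$ and the inequality $\ln^+|uvw|\le\ln^+|u|+\ln^+|v|+\ln^+|w|$ gives
\[
	m\!\left(r,\frac{\L_\W f}{f}\right)\le m\!\left(r,\frac{\D_\W h}{h}\right)+m\!\left(r,\frac{\tilde\mu}{\mu}\right)+m\!\left(r,\frac{\D_\W f}{f}\right),
\]
and the three summands would be estimated separately.

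For the third summand, Theorem~\ref{WilsonLogLemma} gives at once $m(r,\D_\W f/f)=O(r^{\sigma-\frac12+\varepsilon})$, which is $O(r^{\max\{0,\sigma-\frac12\}+\varepsilon})$. For the first summand I would apply Theorem~\ref{WilsonLogLemma} to $h$, so the point is to control the order of $h$. The weight functions $\mu$ and $\tilde\mu$ are quotients of finitely many translates of $\Gamma$ composed with the branch of $\sqrt{\,\cdot\,}$, so Stirling's formula shows they have order at most $\tfrac12$; and $\D_\W f$ has order at most $\sigma$ — this is essentially recorded at the end of the proof of Lemma~\ref{Estimate4}, and follows from combining $m(r,\D_\W f)\le m(r,\D_\W f/f)+m(r,f)$ with $N(r,\D_\W f)\le N(r,f(x^+))+N(r,f(x^-))+O(\ln r)$ and then invoking Lemma~\ref{Estimate3} and Theorem~\ref{WilsonLogLemma} to get $T(r,\D_\W f)\le 2T(r,f)+O(r^{\sigma-\frac12+\varepsilon})+O(\ln r)$. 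Hence $h=\tilde\mu\,\D_\W f$ has order at most $\max\{\tfrac12,\sigma\}$, and Theorem~\ref{WilsonLogLemma} yields
\[
	m\!\left(r,\frac{\D_\W h}{h}\right)=O\!\left(r^{\max\{\frac12,\sigma\}-\frac12+\varepsilon}\right)=O\!\left(r^{\max\{0,\sigma-\frac12\}+\varepsilon}\right).
\]

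The hard part will be the middle summand, for which the target is the sharp bound $m(r,\tilde\mu/\mu)=O(\ln r)$; merely using $m\le T$ together with the fact that $\tilde\mu/\mu$ has order $\tfrac12$ would only give $O(r^{\frac12+\varepsilon})$, which is too weak when $\sigma<1$, so genuine cancellation in the Gamma quotient has to be exploited. Here $\tilde\mu/\mu$ is a product of eight factors $\Gamma(\beta+\tfrac12\pm i\sqrt x)/\Gamma(\beta\pm i\sqrt x)$ with $\beta\in\{a,b,c,d\}$; by the asymptotic $\Gamma(s+\tfrac12)/\Gamma(s)=s^{1/2}(1+O(|s|^{-1}))$, valid away from the poles of $\Gamma$, the product is $O(|x|^{2})$ uniformly on the circle $|x|=r$ outside a short arc about the negative real axis, so $\ln^+|\tilde\mu/\mu|=O(\ln r)$ there; near that arc one has to bound the contribution to $\tfrac1{2\pi}\int_0^{2\pi}\ln^+|\tilde\mu/\mu|\,d\theta$ coming from the poles of $\tilde\mu/\mu$, which lie at points $x\approx-n^2$ along finitely many shifted arithmetic progressions, using the explicit residues of $\Gamma$ together with the chain rule for $z\mapsto\sqrt x$, the net effect being that these contributions sum to $O(\ln r)$. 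I expect this Gamma-function estimate to be the only delicate point; granting it, the three bounds combine to give
\[
	m\!\left(r,\frac{\L_\W f}{f}\right)=O\!\left(r^{\max\{0,\sigma-\frac12\}+\varepsilon}\right)+O(\ln r)=O\!\left(r^{\max\{0,\sigma-\frac12\}+\varepsilon}\right),
\]
which is the claim.
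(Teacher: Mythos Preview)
Your proposal is correct and follows essentially the same route as the paper: the same three-term splitting, the same two applications of Theorem~\ref{WilsonLogLemma} (to $f$ and to $h=\tilde\mu\,\D_\W f$, with $\sigma(h)\le\max\{\tfrac12,\sigma\}$), and the same reduction of the middle term to Gamma asymptotics. The one place you overcomplicate is the estimate for $m(r,\tilde\mu/\mu)$: since $\pm i\sqrt{x}=\pm i\sqrt{r}e^{i\theta/2}$ has argument in $(0,\pi)$ or $(-\pi,0)$ for $\theta\in(-\pi,\pi)$, it stays away from the negative real axis, so the paper simply applies the Tricomi--Erd\'elyi expansion $\Gamma(\rho e^{i\phi}+\alpha)/\Gamma(\rho e^{i\phi}+\beta)=(\rho e^{i\phi})^{\alpha-\beta}(1+O(\rho^{-1}))$ pointwise for each $\theta$ and then Lebesgue dominated convergence to obtain the sharper $m(r,\tilde\mu/\mu)\sim 2\ln r$, with no separate pole analysis needed.
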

\begin{proof}
	If $f\not\equiv0$ is a meromorphic function, then we have
	\begin{align}
		\label{C14a}
		\begin{aligned}
			m\left(r,\frac{\L_\W f}{f}\right) &\le m\left(r,\frac{\D_\W\mu(x;a+\frac{1}{2},b+\frac{1}{2},c+\frac{1}{2},d+\frac{1}{2})\D_\W f}{\mu(x;a+\frac{1}{2},b+\frac{1}{2},c+\frac{1}{2},d+\frac{1}{2})\D_\W f}\right) \\
			&\ \ \ + m\left(r,\frac{\D_\W f}{f}\right) + m\left(r,\frac{\mu(x;a+\frac{1}{2},b+\frac{1}{2},c+\frac{1}{2},d+\frac{1}{2})}{\mu(x;a,b,c,d)}\right).
		\end{aligned}
	\end{align}
	Now for any fixed $\phi\in(-\pi,\pi)$ and any $\alpha,\beta\in\mathbb{C}$, we have
	\[
		\frac{\Gamma(\rho e^{i\phi}+\alpha)}{\Gamma(\rho e^{i\phi}+\beta)} = (\rho e^{i\phi})^{\alpha-\beta}\left[1+\frac{(\alpha-\beta)(\alpha-\beta-1)}{2\rho e^{i\phi}}+O\left(\frac{1}{\rho^2}\right)\right]
	\]
	as $\rho\to\infty$ \cite{Tricomi-Erdelyi}, so given any $a\in\mathbb{C}$ and any fixed $\theta\in(-\pi,\pi)$, by taking $\alpha=a+\frac{1}{2}$, $\beta=a$ and $\phi=\frac{\theta+\pi}{2},\frac{\theta-\pi}{2}$, we have
	\begin{align*}
		&\ \ \ \,\ln^+\left|\frac{\Gamma(a+\frac{1}{2}+i\sqrt{r}e^{i\frac{\theta}{2}})\Gamma(a+\frac{1}{2}-i\sqrt{r}e^{i\frac{\theta}{2}})}{\Gamma(a+i\sqrt{r}e^{i\frac{\theta}{2}})\Gamma(a-i\sqrt{r}e^{i\frac{\theta}{2}})}\right| \\
		&= \ln^+\left|r^{\frac{1}{4}}\left(1+\frac{a-\frac{1}{4}}{2i\sqrt{r}e^{i\frac{\theta}{2}}}+O\left(\frac{1}{r}\right)\right)r^{\frac{1}{4}}\left(1-\frac{a-\frac{1}{4}}{2i\sqrt{r}e^{i\frac{\theta}{2}}}+O\left(\frac{1}{r}\right)\right)\right| \\
		&= \ln^+\left|\sqrt{r}\left(1+O\left(\frac{1}{r}\right)\right)\right| \sim \frac{1}{2}\ln r
	\end{align*}
	as $r\to\infty$.\ \ Lebesgue dominated convergence theorem then gives
	\[
		\lim_{r\to\infty}{\frac{1}{2\pi}\int_{-\pi}^{\pi}{\frac{1}{\ln r}\ln^+\left|\frac{\Gamma(a+\frac{1}{2}+i\sqrt{r}e^{i\frac{\theta}{2}})\Gamma(a+\frac{1}{2}-i\sqrt{r}e^{i\frac{\theta}{2}})}{\Gamma(a+i\sqrt{r}e^{i\frac{\theta}{2}})\Gamma(a-i\sqrt{r}e^{i\frac{\theta}{2}})}\right|\,d\theta}}=\frac{1}{2}
	\]
	for any $a\in\mathbb{C}$.\ \ This implies that $m\left(|x|=r,\frac{\Gamma(a+\frac{1}{2}+iz)\Gamma(a+\frac{1}{2}-iz)}{\Gamma(a+iz)\Gamma(a-iz)}\right) \sim \frac{1}{2}\ln r$, and replacing $a$ by $b,c,d$ successively, we obtain
	\[
		m\left(r,\frac{\mu(x;a+\frac{1}{2},b+\frac{1}{2},c+\frac{1}{2},d+\frac{1}{2})}{\mu(x;a,b,c,d)}\right) \sim 2\ln r
	\]
	as $r\to\infty$.\ \ On the other hand, since $f$ is of finite order $\sigma(f)=\sigma$, the order of $\mu(x;a+\frac{1}{2},b+\frac{1}{2},c+\frac{1}{2},d+\frac{1}{2})\D_\W f$ is at most
	\begin{align*}
		&\ \ \ \,\sigma\left(\mu\left(x;a+\frac{1}{2},b+\frac{1}{2},c+\frac{1}{2},d+\frac{1}{2}\right)\D_\W f\right) \\
		&\le \max\left\{\sigma\left(\mu\left(x;a+\frac{1}{2},b+\frac{1}{2},c+\frac{1}{2},d+\frac{1}{2}\right)\right),\sigma(\D_\W f)\right\} \\
		&\le \max\left\{\frac{1}{2},\sigma(f)\right\}.
	\end{align*}
	Thus applying Theorem~\ref{WilsonLogLemma} to \eqref{C14a}, we have for every $\varepsilon>0$,
	\begin{align*}
		m\left(r,\frac{\L_\W f}{f}\right) &\le O(r^{\sigma(\mu(x;a+\frac{1}{2},b+\frac{1}{2},c+\frac{1}{2},d+\frac{1}{2})\D_\W f)-\frac{1}{2}+\varepsilon}) + O(r^{\sigma(f)-\frac{1}{2}+\varepsilon}) + 2\ln r \\
		&\le O(r^{\max\{\frac{1}{2},\sigma\}-\frac{1}{2}+\varepsilon}) + O(r^{\sigma-\frac{1}{2}+\varepsilon}) + 2\ln r = O(r^{\max\{0,\sigma-\frac{1}{2}\}+\varepsilon})
	\end{align*}
	as $r\to\infty$.
\end{proof}

From the above lemma, we deduce the following theorem about meromorphic solutions to equations of the form $\L_\W (y) = \phi y$.

\begin{theorem}
Let $\phi$ be an entire function of order $\sigma_\phi>0$.\ \ Then any non-trivial meromorphic solution $f$ to the equation $\L_\W (y) = \phi y$ must have order $\sigma\ge\sigma_\phi+\frac{1}{2}$.
\end{theorem}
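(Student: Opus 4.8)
The plan is to rewrite the coefficient $\phi$ as a logarithmic Wilson difference of the solution $f$ and then invoke the estimate for $m\!\left(r,\frac{\L_\W f}{f}\right)$ established in the lemma just above.

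First, if $f$ has infinite order there is nothing to prove, since then $\sigma=\infty\ge\sigma_\phi+\frac12$ holds vacuously. So assume that $f$ is a non-trivial meromorphic solution of finite order $\sigma$. Since $f\not\equiv 0$ and the identity $\L_\W f=\phi f$ holds between meromorphic functions on $\mathbb{C}$, we may divide by $f$ to obtain $\phi=\frac{\L_\W f}{f}$ as meromorphic functions. Consequently $m(r,\phi)=m\!\left(r,\frac{\L_\W f}{f}\right)$, which by the preceding lemma is $O\!\left(r^{\max\{0,\,\sigma-1/2\}+\varepsilon}\right)$ as $r\to\infty$, for every $\varepsilon>0$. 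Since $\phi$ is entire we have $N(r,\phi)=0$, and therefore $T(r,\phi)=m(r,\phi)=O\!\left(r^{\max\{0,\,\sigma-1/2\}+\varepsilon}\right)$.

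It remains to extract the order inequality, and here the hypothesis $\sigma_\phi>0$ is essential. Suppose for contradiction that $\sigma<\sigma_\phi+\frac12$. If $\sigma<\frac12$, then $\max\{0,\sigma-\frac12\}=0$, so $T(r,\phi)=O(r^\varepsilon)$ for every $\varepsilon>0$, which forces the order of $\phi$ to be $0$, contradicting $\sigma_\phi>0$. If instead $\sigma\ge\frac12$, then $T(r,\phi)=O\!\left(r^{\sigma-1/2+\varepsilon}\right)$ for every $\varepsilon>0$, whence $\sigma_\phi\le\sigma-\frac12$, i.e. $\sigma\ge\sigma_\phi+\frac12$, again a contradiction. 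Hence $\sigma\ge\sigma_\phi+\frac12$, as claimed.

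The proof is essentially a one-line substitution followed by bookkeeping on growth orders, so no real obstacle is anticipated. The only point deserving attention is the case split at the end, which is needed because the exponent $\max\{0,\sigma-\frac12\}$ appearing in the lemma is piecewise; it is precisely the assumption $\sigma_\phi>0$ that excludes the degenerate regime $\sigma<\frac12$ and makes the clean conclusion $\sigma\ge\sigma_\phi+\frac12$ possible.
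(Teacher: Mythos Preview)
Your proof is correct and follows essentially the same approach as the paper: write $\phi=\dfrac{\L_\W f}{f}$, use that $\phi$ is entire so $T(r,\phi)=m(r,\phi)$, and apply the preceding lemma to bound $m\!\left(r,\dfrac{\L_\W f}{f}\right)$. The paper compresses your case split into a single choice $\varepsilon:=\sigma_\phi-\max\{0,\sigma-\tfrac12\}>0$, but the logic is identical.
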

\begin{proof}
	Suppose that $f$ is a meromorphic solution to the equation $\L_\W (y) = \phi y$ having order $\sigma<\sigma_\phi+\frac{1}{2}$.\ \ Then we take $\varepsilon:=\sigma_\phi-\max\{0,\sigma-\frac{1}{2}\}>0$.\ \ Since $\phi$ is entire, we have
	\begin{align*}
		T(r,\phi) &= m(r,\phi) = m\left(r,\frac{\L_\W f}{f}\right) = O(r^{\max\{0,\sigma-\frac{1}{2}\}+\frac{\varepsilon}{2}}) = O(r^{\sigma_\phi-\frac{\varepsilon}{2}})
	\end{align*}
	as $r\to\infty$, which is a contradiction.
\end{proof}

\subsection{Wilson analogue of Clunie's lemma}

We can derive from Theorem~\ref{WilsonLogLemma} a Wilson analogue of Clunie's Lemma, which is a result about some non-linear Wilson difference equations.\ \ Before stating the result we need a definition.

\begin{definition}
Let $f$ be a complex function.\ \ A \textbf{\textit{Wilson difference polynomial in $\boldsymbol{f}$}} is a finite sum of finite products of the form
\[
	P(f) = \sum_j{P_j\prod_l{(\D_\W^l f)^{d_{l,j}}}},
\]
where $d_{l,j}$ are non-negative integers and the coefficients $P_j$ are polynomials in one variable.\ \ The \textbf{\textit{degree}} of $P$ \textbf{\textit{over $\boldsymbol{f}$}} is the non-negative integer
\[
	\deg_f{P} := \max_j{\sum_l{d_{l,j}}}.
\]
\end{definition}

\begin{theorem}
\label{WClunie}
\textup{(Clunie's Lemma for the Wilson operator)}
Let $n$ be a positive integer and $f\not\equiv 0$ be a meromorphic solution of finite order $\sigma$ of the Wilson difference equation
\begin{align}
	\label{C15} y^n P(y) = Q(y),
\end{align}
where $P(y)$ and $Q(y)$ are Wilson difference polynomials in $y$ and $\deg_f{Q} \le n$.\ \ Then for every $\varepsilon>0$, we have
\begin{align}
	\label{C16} m(r,P) = O(r^{\sigma-\frac{1}{2}+\varepsilon}) + O(\ln r)
\end{align}
as $r\to\infty$.
\end{theorem}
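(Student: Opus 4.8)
The plan is to mimic the classical proof of Clunie's lemma (cf.\ \cite{Laine}): partition the circle $\{|x|=r\}$ according to whether $|f|<1$ or $|f|\ge 1$, use the equation \eqref{C15} to express $P$ on the part where $|f|$ is large, use the expression for $P$ directly where $|f|$ is small, and in both cases estimate everything by a fixed number of iterated logarithmic Wilson differences, which are controlled by Theorem~\ref{WilsonLogLemma}.

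Before the main estimate I would collect the tools. Write $P(y)=\sum_j P_j\prod_l(\D_\W^l y)^{d_{l,j}}$ and $Q(y)=\sum_k Q_k\prod_l(\D_\W^l y)^{e_{l,k}}$, and put $s_j:=\sum_l d_{l,j}\ge 0$ and $t_k:=\sum_l e_{l,k}$, so that $t_k\le\deg_f Q\le n$ for every $k$. Since the $P_j$ and $Q_k$ are polynomials, $m(r,P_j)=O(\ln r)$ and $m(r,Q_k)=O(\ln r)$. Next, $\D_\W$ preserves meromorphy by Proposition~\ref{Mero}(ii) and does not increase the order (this is already used in the proof of Lemma~\ref{Estimate4}, and follows from the estimates in the proof of Theorem~\ref{WilsonLogLemma} together with Lemma~\ref{Estimate3}), so $\sigma(\D_\W^l f)\le\sigma$ for each $l$. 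Hence, from the telescoping identity
\[
	\frac{\D_\W^l f}{f}=\prod_{\nu=1}^{l}\frac{\D_\W(\D_\W^{\nu-1}f)}{\D_\W^{\nu-1}f}
\]
and $l$ applications of Theorem~\ref{WilsonLogLemma} (one to each of $f,\D_\W f,\dots,\D_\W^{l-1}f$, all of order $\le\sigma$), I get $m\big(r,\D_\W^l f/f\big)=O(r^{\sigma-\frac12+\varepsilon})$ as $r\to\infty$, for every fixed $l$.

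For the main argument I would, in each product, factor out the appropriate power of $f$,
\[
	\prod_l(\D_\W^l f)^{d_{l,j}}=f^{s_j}\prod_l\Big(\tfrac{\D_\W^l f}{f}\Big)^{d_{l,j}},\qquad
	\prod_l(\D_\W^l f)^{e_{l,k}}=f^{t_k}\prod_l\Big(\tfrac{\D_\W^l f}{f}\Big)^{e_{l,k}},
\]
(discarding any term whose $\D_\W^l f$-factor vanishes identically), and split $[0,2\pi)=E_1\cup E_2$ with $E_1:=\{\theta:|f(re^{i\theta})|<1\}$ and $E_2:=\{\theta:|f(re^{i\theta})|\ge 1\}$, ignoring the finitely many $\theta$ at which $f$ has a pole on $\{|x|=r\}$. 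On $E_2$, using \eqref{C15} in the form $P(f)=Q(f)/f^n=\sum_k Q_k f^{t_k-n}\prod_l(\D_\W^l f/f)^{e_{l,k}}$ together with $t_k-n\le 0$ and $|f|\ge 1$, which give $|f|^{t_k-n}\le 1$, one gets $|P(f)|\le\sum_k|Q_k|\prod_l|\D_\W^l f/f|^{e_{l,k}}$ there. On $E_1$, using $P(f)=\sum_j P_j f^{s_j}\prod_l(\D_\W^l f/f)^{d_{l,j}}$ together with $s_j\ge 0$ and $|f|<1$, which give $|f|^{s_j}\le 1$, one gets $|P(f)|\le\sum_j|P_j|\prod_l|\D_\W^l f/f|^{d_{l,j}}$ there. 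Taking $\ln^+$ of these two pointwise bounds (using the elementary inequalities for $\ln^+$ of products, finite sums and powers), integrating over $E_1$ and $E_2$ respectively and adding, I arrive at
\[
	m(r,P)\le\sum_j m(r,P_j)+\sum_k m(r,Q_k)+\sum_{j,l}d_{l,j}\,m\Big(r,\tfrac{\D_\W^l f}{f}\Big)+\sum_{k,l}e_{l,k}\,m\Big(r,\tfrac{\D_\W^l f}{f}\Big)+O(1),
\]
which by the preliminary facts is $O(r^{\sigma-\frac12+\varepsilon})+O(\ln r)$, establishing \eqref{C16}.

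The $\ln^+$ manipulations, the bookkeeping over the indices $j,k,l$, and the handling of the exceptional arcs are routine. The one point requiring care is the iteration of Theorem~\ref{WilsonLogLemma}: one must check that $\D_\W$ does not raise the order, so that the exponent in the bound for $m(r,\D_\W^l f/f)$ stays $\sigma-\frac12+\varepsilon$ uniformly in $l$, and observe that only finitely many values of $l$ appear since $P$ and $Q$ are finite sums of finite products. Once $\sigma(\D_\W^l f)\le\sigma$ is in hand, the remainder of the proof is mechanical.
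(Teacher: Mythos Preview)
Your proof is correct and is precisely the standard Clunie argument adapted to the Wilson setting, which is exactly what the paper has in mind when it writes that the proof ``is a standard application of Theorem~\ref{WilsonLogLemma}, and is therefore omitted.'' The only point worth flagging is the iteration step $m(r,\D_\W^l f/f)=O(r^{\sigma-\frac12+\varepsilon})$, but you have handled it properly by noting that $\sigma(\D_\W^l f)\le\sigma$ (as the paper itself records, via Lemma~\ref{Estimate3}, in the proofs of Lemma~\ref{Estimate4} and Theorem~\ref{PtwiseEst}).
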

The proof of Theorem~\ref{WClunie} is a standard application of Theorem~\ref{WilsonLogLemma}, and is therefore omitted.\ \ We note that the analogues of Clunie's lemma for the usual difference operator $\Delta$ and the Askey-Wilson divided-difference operator $\D_q$ are given in \cite{HK-1}, \cite{Laine-Yang} and \cite{Chiang-Feng3} respectively.

\subsection{Wilson interpolation equations}
Next we look at the growth of meromorphic solutions to some Wilson interpolation equations, which can be deduced using Theorem~\ref{WilsonLogLemma}.

\begin{theorem}
\label{WIE1}
Let $n$ be a positive integer, $A_0,A_1,\ldots,A_n$ be entire functions with orders $\sigma_0,\sigma_1,\ldots,\sigma_n$ respectively, such that $\displaystyle \sigma_l>\max_{k\in\{0,1,\ldots,n\}\setminus\{l\}}{\sigma_k}$ for some $l\in\{0,1,\ldots,n\}$, and $f$ be a non-trivial meromorphic solution to the Wilson interpolation equation
\begin{align}
	\label{C20} A_n(x)y(x^{+(n)}) + A_{n-1}(x)y(x^{+(n-1)}) + \cdots + A_1(x)y(x^+) + A_0(x)y(x) = 0.
\end{align}
Then $f$ has order $\sigma\ge\sigma_l+\frac{1}{2}$.
\end{theorem}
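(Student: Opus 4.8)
The plan is to isolate the term of \eqref{C20} whose coefficient has the largest order and to bound it from above via the lemma on logarithmic Wilson difference (Theorem~\ref{WilsonLogLemma}). Put $\eta:=\sigma_l-\max_{k\in\{0,1,\ldots,n\}\setminus\{l\}}\sigma_k>0$. If $\sigma=\infty$ there is nothing to prove, so assume $\sigma<\infty$ and, for a contradiction, that $\sigma<\sigma_l+\frac12$; write $\rho:=\sigma_l-\sigma+\frac12>0$. Since $f\not\equiv0$ and $x\mapsto x^{+(l)}$ is non-constant, $f(x^{+(l)})\not\equiv0$, so dividing \eqref{C20} by $f(x^{+(l)})$ and solving for the dominant coefficient gives
\[
	A_l(x)=-\sum_{k\ne l}A_k(x)\,\frac{f(x^{+(k)})}{f(x^{+(l)})}.
\]
Applying $m(r,\cdot)$, using $T(r,A_l)=m(r,A_l)$ because $A_l$ is entire, and using subadditivity of $\ln^+$ over sums and products, we get
\[
	T(r,A_l)\le\sum_{k\ne l}m(r,A_k)+\sum_{k\ne l}m\!\left(r,\frac{f(x^{+(k)})}{f(x^{+(l)})}\right)+O(1).
\]

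The heart of the proof is the estimate $m\big(r,f(x^{+(j)})/f(x)\big)=O(r^{\sigma-\frac12+\varepsilon})$ as $r\to\infty$, for every fixed integer $j$ and every $\varepsilon>0$, which I would obtain by repeating the proof of Theorem~\ref{WilsonLogLemma} with the half-shift $\frac i2$ replaced by $\frac{ij}2$: substituting $x^{+(j)}=(z+\frac{ij}2)^2$ and $x=z^2$ into the Poisson-Jensen formula \eqref{B2} for $f$ and subtracting yields the analogue of \eqref{B5}, Lemmas~\ref{Estimate1} and~\ref{Estimate2} remain valid with $\frac i2$ replaced by $\frac{ij}2$ (only the constants change, and the finitely many zeros of the modified denominator in Lemma~\ref{Estimate2}(ii) still cluster near the zeros of $\sin2\theta$ as $r\to\infty$, so the bound $\frac{1+\varepsilon}{(1-\alpha)r^\alpha}$ persists), and the computations leading to \eqref{B12} and \eqref{B14} carry over verbatim. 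Since Poisson-Jensen is applied only to $f$, which is genuinely meromorphic on $\mathbb C$, no difficulty arises from $x\mapsto x^{+(j)}$ being single-valued only away from the branch cut; and because the estimate controls $\frac1{2\pi}\int_0^{2\pi}\big|\ln|f((re^{i\theta})^{+(j)})/f(re^{i\theta})|\big|\,d\theta$ and $|\ln|w||=\ln^+|w|+\ln^+|1/w|$, both $m\big(r,f(x^{+(j)})/f(x)\big)$ and $m\big(r,f(x)/f(x^{+(j)})\big)$ are $O(r^{\sigma-\frac12+\varepsilon})$. Factoring $\frac{f(x^{+(k)})}{f(x^{+(l)})}=\frac{f(x^{+(k)})}{f(x)}\cdot\frac{f(x)}{f(x^{+(l)})}$ then gives $m\big(r,f(x^{+(k)})/f(x^{+(l)})\big)=O(r^{\sigma-\frac12+\varepsilon})$ for each $k\ne l$.

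To finish, recall that an entire function of order $\sigma_k$ satisfies $T(r,A_k)=O(r^{\sigma_k+\varepsilon'})=O(r^{\sigma_l-\eta+\varepsilon'})$ for $k\ne l$ and every $\varepsilon'>0$, whereas by the definition of order there is a sequence $r_m\to\infty$ with $T(r_m,A_l)\ge r_m^{\sigma_l-\varepsilon'}$. Choosing $\varepsilon>0$ and $\varepsilon'>0$ small enough that $\sigma-\frac12+\varepsilon<\sigma_l-\varepsilon'$ (possible since $\rho>0$), that $\varepsilon'<\eta/2$, and that $\varepsilon'<\sigma_l$, the displayed inequality forces
\[
	r_m^{\sigma_l-\varepsilon'}\le O(r_m^{\sigma_l-\eta+\varepsilon'})+O(r_m^{\sigma-\frac12+\varepsilon})+O(1)=o(r_m^{\sigma_l-\varepsilon'})
\]
as $m\to\infty$, which is absurd. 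Hence $\sigma\ge\sigma_l+\frac12$.

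The main obstacle is the generalized logarithmic-difference estimate: one must verify that the delicate denominator-zero analysis in the proof of Lemma~\ref{Estimate2}(ii) survives the replacement $\frac i2\to\frac{ij}2$ (after scaling, the relevant denominator is again $\sin2\theta$ perturbed by an $O(1/\sqrt r)$ term depending on $j$, whose zeros still approach those of $\sin2\theta$), and that the sums over the zeros and poles of $f$ in the analogue of \eqref{B8} are controlled exactly as in the case $j=\pm1$. Everything else — the reduction through \eqref{C20}, the $\ln^+$ bookkeeping, and the final order comparison — is routine.
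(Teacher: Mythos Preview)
Your proposal is correct and follows essentially the same route as the paper: assume $\sigma<\sigma_l+\tfrac12$, divide \eqref{C20} by $f(x^{+(l)})$, bound $m(r,A_l)$ by $\sum_{k\ne l}m(r,A_k)+\sum_{k\ne l}m\big(r,f(x^{+(k)})/f(x^{+(l)})\big)+O(1)$, and derive a contradiction from the order hypothesis. The only cosmetic difference is that the paper asserts the key estimate $m\big(r,f(x^{+(k)})/f(x^{+(l)})\big)=O(r^{\sigma-\frac12+\varepsilon})$ directly by pointing back to \eqref{B12}--\eqref{B14} with the phrase ``it is easy to see'', whereas you spell out the mechanism (redo Poisson--Jensen with the half-shift $\tfrac{i}{2}$ replaced by $\tfrac{ij}{2}$, then factor through $f(x)$) and correctly flag the verification of Lemma~\ref{Estimate2}(ii) for the modified shift as the one place requiring care.
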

\begin{proof}
	We suppose the contrary that $f$ is a non-trivial meromorphic solution to \eqref{C20} having order $\sigma<\sigma_l+\frac{1}{2}$.\ \ Now $f\not\equiv 0$, so we may substitute $f$ into \eqref{C20} and divide both sides by $f(x^{+(l)})$ to get
	\begin{align}
		\label{C21} A_n(x)\frac{f(x^{+(n)})}{f(x^{+(l)})} + \cdots + A_l(x) + \cdots + A_1(x)\frac{f(x^{+})}{f(x^{+(l)})} + A_0(x)\frac{f(x)}{f(x^{+(l)})} = 0.
	\end{align}
	Since $\displaystyle \sigma_l>\max_{k\in\{0,1,\ldots,n\}\setminus\{l\}}{\sigma_k}$, we may take $\displaystyle\max_{k\in\{0,1,\ldots,n\}\setminus\{l\}}{\sigma_k}<s<\sigma_l$ and take $\varepsilon>0$ such that we simultaneously have
	\[
		\sigma+2\varepsilon<\sigma_l+\frac{1}{2} \hspace{25px} \mbox{and} \hspace{25px} s+2\varepsilon<\sigma_l.
	\]
	Then $T(r,f)=O(r^{\sigma+\frac{\varepsilon}{2}})$ as $r\to\infty$.\ \ Thus by choosing $\alpha=1-\varepsilon$ in \eqref{B12} and \eqref{B14}, it is easy to see that for each $k\in\{0,1,\ldots,n\}\setminus\{l\}$, we have
	\[
		m\left(r,\frac{f(x^{+(k)})}{f(x^{+(l)})}\right) = O(r^{\sigma-\frac{1}{2}+\varepsilon})
	\]
	as $r\to\infty$.\ \ Applying this to \eqref{C21}, we obtain
	\begin{align*}
		m(r,A_l) &\le \sum_{k\in\{0,1,\ldots,n\}\setminus\{l\}}{\left[m\left(r,\frac{f(x^{+(k)})}{f(x^{+(l)})}\right) + m(r,A_k)\right]} + \ln n \\
		&= O(r^{\sigma-\frac{1}{2}+\varepsilon}) + o(r^{s+\varepsilon}) \\
		&= o(r^{\sigma_l-\varepsilon})
	\end{align*}
	as $r\to\infty$, which is an obvious contradiction.
\end{proof}

It is clear that the signs of the shifts are not important, and Theorem~\ref{WIE1} still holds even if we include terms of the form $A_{-l}(x)y(x^{-(l)})$ in \eqref{C20}.\ \ To illustrate that the equality in $\sigma\ge\sigma_l+\frac{1}{2}$ can actually be achieved, we consider the following example taken from Ruijsenaars' work in \cite{Ruijsenaars}.

\begin{example}
\label{Ghyp}
In this example, we consider the Wilson interpolation equation
\begin{align}
	\label{C22} y(x^+) = 2\cosh(\pi\sqrt{x}) y(x^-).
\end{align}
According to Ruijsenaars' article \cite{Ruijsenaars}, a solution to the difference equation
\begin{align}
	\label{C23} y\left(z+\frac{ia}{2}\right) = 2\cosh{\frac{\pi z}{b}}\,y\left(z-\frac{ia}{2}\right),
\end{align}
where $a,b>0$, is given by the hyperbolic gamma function
\[
	G_\mathrm{hyp}(a,b;z) := e^{i\int_0^\infty{\left(\frac{\sin{2tz}}{2\sinh{at}\sinh{bt}}-\frac{z}{abt}\right)\frac{dt}{t}}}
\]
for every $z\in\mathbb{C}$ satisfying $|\mathfrak{I}2z|<a+b$, which can be extended to a meromorphic function on the whole complex plane.\ \ Noting that the solution set of the difference equation \eqref{C23} forms a vector space, and that
\[
	\displaystyle G_\mathrm{hyp}(a,b;-z)=\frac{1}{G_\mathrm{hyp}(a,b;z)}
\]
for every $z\in\mathbb{C}$, we can observe that a meromorphic solution to the Wilson interpolation equation \eqref{C22} is given by
\[
	f(x) = \frac{G_\mathrm{hyp}(1,1;\sqrt{x})+G_\mathrm{hyp}(1,1;-\sqrt{x})}{2} = \cos\left[\int_0^\infty{\left(\frac{\sin{2t\sqrt{x}}}{2\sinh^2{t}}-\frac{\sqrt{x}}{t}\right)\frac{dt}{t}}\right].
\]
Now since $G_\mathrm{hyp}(1,1;x)$ is of order $2$, we see that the order of this solution $f$ is at most $1$.\ \ $f$ has poles at $-k^2$ of order $k$ for each $k\in\mathbb{N}$.\ \ On the other hand, it is obvious that the entire function $2\cosh(\pi\sqrt{x})$ has order $\frac{1}{2}$.\ \ Therefore $f$ must be of order exactly $1$, and the equality in the conclusion of Theorem~\ref{WIE1} is achieved.
\end{example}

In addition to Theorem~\ref{WClunie} and Theorem~\ref{WIE1}, more results about Wilson difference equations and Wilson interpolation equations will be established after we give a pointwise estimate of the logarithmic Wilson difference in the next section.

\section{Pointwise Estimate of the Logarithmic Wilson Difference}
\label{sec:Ptwise}

The lemma on logarithmic Wilson difference we have obtained in \S\ref{sec:WilsonLogLemma} is about the proximity function $m\left(r,\frac{\D_\W f}{f}\right)$, which is a measure on the overall growth of the logarithmic Wilson difference $\frac{\D_\W f}{f}$.\ \ In this section, we will look at the pointwise growth of $\frac{\D_\W f}{f}$, which will play a very crucial role in Wilson difference equations.

\subsection{The pointwise estimate}

Our pointwise estimate of the growth of $\frac{\D_\W f}{f}$ depends on the following deep and important result by H. Cartan, which can be found in his paper \cite{Cartan1} published in 1928.

\begin{lemma}
\label{CartanLemma}
\textup{(Cartan's Lemma)}
	Given any $p$ complex numbers $x_1,x_2,\ldots,x_p$ and any real number $B>0$, there exist finitely many closed disks $D_1,D_2,\ldots,D_q$ in the complex plane, with radii $r_1,r_2,\ldots,r_q$ respectively, such that
	\begin{enumerate}[(i)]
		\item $r_1+r_2+\cdots+r_q=2B$, and
		\item For every complex number $x$ outside all of the disks $D_j$, there exists a permutation of the $p$ given points, say $\hat{x}_1,\hat{x}_2,\ldots,\hat{x}_p$ (which may depend on $x$), that satisfies
		\begin{align*}
			|x-\hat{x}_\lambda| > B\frac{\lambda}{p}, \hspace{30px} \mbox{for $\lambda=1,2,\ldots,p$.}
		\end{align*}
	\end{enumerate}
\end{lemma}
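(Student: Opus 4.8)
The plan is to reduce the statement to a covering estimate for the \emph{exceptional set}
\[
	E := \{x\in\mathbb{C} : \text{no permutation }\hat{x}_1,\dots,\hat{x}_p\text{ of the }p\text{ points satisfies }|x-\hat{x}_\lambda|>B\lambda/p\text{ for all }\lambda\}.
\]
The first step is to describe $E$ combinatorially. Fix $x$ and let $d_1(x)\le d_2(x)\le\cdots\le d_p(x)$ be the distances from $x$ to its $1$st, $2$nd, \dots, $p$th nearest point among $x_1,\dots,x_p$. I claim the ``nearest point to smallest index'' assignment is optimal: a valid permutation exists if and only if $d_j(x)>Bj/p$ for every $j$. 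Indeed, if $d_j(x)\le Bj/p$ for some $j$, then the $j$ nearest points all lie within distance $Bj/p$ of $x$, and such a point can only be assigned to an index $\lambda$ with $B\lambda/p<d_j(x)\le Bj/p$, hence $\lambda<j$; so $j$ points would have to occupy at most $j-1$ indices, which is impossible. Therefore $x\in E$ if and only if there is some $k\in\{1,\dots,p\}$ for which the closed disk $\overline{D(x;Bk/p)}$ contains at least $k$ of the points $x_1,\dots,x_p$.

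Next I would build the covering disks by a single greedy pass from the largest scale downwards, deleting points as they are absorbed. Start with $T=\{x_1,\dots,x_p\}$. While $T\neq\emptyset$: let $k$ be the \emph{largest} integer in $\{1,\dots,p\}$ for which some closed disk of radius $Bk/p$ contains at least $k$ points of $T$ (well-defined, since $k=1$ always works), fix such a disk $\Delta$ with centre $c$, output the concentric disk $\Delta'$ of radius $2Bk/p$, delete from $T$ the points lying in $\Delta$ (there are at least $k$ of them), and repeat. This terminates in at most $p$ steps. If $k_1,\dots,k_m$ are the successive values of $k$, the deleted point-sets are disjoint, so $\sum_j k_j\le p$, whence $\sum_j(\text{radius of }\Delta_j')=\sum_j 2Bk_j/p\le 2B$. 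If this sum is strictly less than $2B$, simply enlarge the last disk (or append one tiny disk) so that the radii sum to exactly $2B$; this produces the required finite family $D_1,\dots,D_q$ with $r_1+\cdots+r_q=2B$.

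It remains to check that every $x$ lying outside all output disks is not in $E$, so admits a valid permutation. Suppose instead $x\in E$ with witness $k=\lambda$, so $\overline{D(x;B\lambda/p)}$ contains at least $\lambda$ of the original points. Let $j$ be the first step of the procedure at which one of those $\lambda$ witnessing points is deleted; at the start of step $j$ all $\lambda$ of them are still in $T$, so a disk of radius $B\lambda/p$ contains at least $\lambda$ points of $T$, which forces $k_j\ge\lambda$ by the maximality in the choice of $k$. Taking a witnessing point $y$ deleted at step $j$, we get $|x-y|\le B\lambda/p\le Bk_j/p$ and $|y-c_j|\le Bk_j/p$, hence $|x-c_j|\le 2Bk_j/p$, i.e.\ $x\in\Delta_j'$ --- contradicting that $x$ lies outside every output disk. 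Thus $E$ is covered, which finishes the proof. I expect the main obstacle to be exactly this bookkeeping: a naive cover that treats each scale $Bk/p$ separately only yields total radius of order $2Bp$, and it is the deletion of absorbed points (so that $\sum_j k_j\le p$) together with the ``first deletion step'' comparison that recovers the sharp bound $2B$. Minor care is also needed with open/closed-disk conventions and with passing between the strict inequalities $|x-\hat{x}_\lambda|>B\lambda/p$ in the statement and the non-strict containments used in the covering, but these are routine once the scheme above is set up.
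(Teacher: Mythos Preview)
The paper does not prove this lemma: it is quoted as a classical result of H.~Cartan (1928) and cited without argument, so there is no ``paper's own proof'' to compare against.

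Your proposal is a correct, self-contained proof. The characterisation of the exceptional set $E$ via the ordered distances $d_1(x)\le\cdots\le d_p(x)$ is right (the pigeonhole step is the key point, and you have it), and the greedy top-down cover is exactly the mechanism that makes the radii telescope: because each pass deletes at least $k_j$ points and the deleted sets are disjoint, $\sum_j k_j\le p$ and hence $\sum_j 2Bk_j/p\le 2B$. The ``first deletion step'' argument closing Step~4 is the heart of the matter and is carried out correctly --- the maximality of $k_j$ at that step is precisely what forces $k_j\ge\lambda$ and hence $x\in\Delta_j'$. The adjustments you flag (padding the radii up to exactly $2B$, open/closed disk conventions) are indeed routine.

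This is in fact the standard modern proof of Cartan's lemma (see, e.g., Levin's \emph{Lectures on Entire Functions}); Cartan's original 1928 argument is organised somewhat differently but rests on the same clustering idea.
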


In what follows, we will denote by
\[
	n(r) := n(r,f)+n\left(r,\frac{1}{f}\right)
\]
the unintegrated counting function for both zeros and poles of $f$.\ \ Our main result in this section starts from the following lemma.
\begin{lemma}
\label{Estimate6}
	Let $f\not\equiv 0$ be a meromorphic function.\ \ Then for every $\gamma>1$, there exist a subset $E\subset(1,\infty)$ of finite logarithmic measure and a constant $A_\gamma$ depending only on $\gamma$, such that for every complex number $x$ with modulus $r\notin E\cup[0,1]$, we have
	\begin{align}
		\label{F1} \left|\ln\left|\frac{f(x^+)}{f(x)}\right|\right| \le A_\gamma\left\{\frac{T(\gamma r,f)}{\sqrt{r}} + \frac{n(\gamma r)}{\sqrt{r}}\ln^\gamma{r} \ln^+ [n(\gamma r)]\right\}.
	\end{align}
	In particular, if $f$ has finite order $\sigma$, then for every $\varepsilon>0$, there exists a subset $E\subset(1,\infty)$ of finite logarithmic measure such that for every complex number $x$ with modulus $r\notin E\cup[0,1]$, we have
	\[
		e^{-r^{\sigma-\frac{1}{2}+\varepsilon}} \le \left|\frac{f(x^+)}{f(x)}\right| \le e^{r^{\sigma-\frac{1}{2}+\varepsilon}}.
	\]
\end{lemma}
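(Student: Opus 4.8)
The plan is to run the Poisson--Jensen analysis of \S\ref{sec:WilsonLogLemma} pointwise instead of in the mean. Fix $\gamma>1$, and for $x$ with $|x|=r$ large apply the Poisson--Jensen difference formula \eqref{B5} with $R=\gamma r$ (legitimate once $R>\tfrac14$ and $r<(\sqrt R-\tfrac12)^2$, hence for $r$ large). It writes $\ln|f(x^+)/f(x)|$ as an integral against a kernel plus four sums over the zeros $\{a_\nu\}$ and poles $\{b_\mu\}$ of $f$ in $D(0;R)$. The integral term and the $\sum\ln\bigl|\tfrac{R^2-\overline{a_\nu}(z+i/2)^2}{R^2-\overline{a_\nu}z^2}\bigr|$-type sums are routine: Lemma~\ref{Estimate1}(i) bounds the kernel by $\tfrac{R(2\sqrt r+1/2)}{(R-r)(R-(\sqrt r+1/2)^2)}=O(r^{-1/2})$, which with $m(R,f)+m(R,\tfrac1f)\le 2T(\gamma r,f)+O(1)$ gives $O\bigl(T(\gamma r,f)/\sqrt r\bigr)$; Lemma~\ref{Estimate1}(ii) bounds each of the at most $n(\gamma r)$ terms of the other two sums by $O(r^{-1/2})$, giving $O\bigl(n(\gamma r)/\sqrt r\bigr)$. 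Both are absorbed by the right-hand side of \eqref{F1}.

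The essential terms are $\sum_{|a_\nu|<R}\ln\bigl|\tfrac{(z+i/2)^2-a_\nu}{z^2-a_\nu}\bigr|$ and its analogue over the poles. Lemma~\ref{Estimate1}(iii) with $\alpha=1$ (so $C_1=1$) bounds each summand by $(\sqrt r+\tfrac14)\bigl[\tfrac1{|x-a_\nu|}+\tfrac1{|x^+-a_\nu|}\bigr]$, so one must control $\sum\bigl(\tfrac1{|x-a_\nu|}+\tfrac1{|x^+-a_\nu|}\bigr)$ and the corresponding pole sum. I would split the zeros and poles into those with $|a_\nu|<r/2$ (``far'') and those with $r/2\le|a_\nu|<\gamma r$ (``near''). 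For a far point $|x-a_\nu|\ge r/2$ and $|x^+-a_\nu|\ge r/3$, so each contributes $O(1/\sqrt r)$, and together they give $O(n(\gamma r)/\sqrt r)$. For a near point, writing $x=z^2$, $x^+=(z+\tfrac i2)^2$ and factoring $z^2-a_\nu=(z-\sqrt{a_\nu})(z+\sqrt{a_\nu})$, the inequality $\max(|z-w|,|z+w|)\ge|z|$ gives $|x-a_\nu|\ge\sqrt r\min(|z-\sqrt{a_\nu}|,|z+\sqrt{a_\nu}|)$, and similarly $|x^+-a_\nu|$ is controlled by the distances of $z$ to $\pm\sqrt{a_\nu}-\tfrac i2$; this reduces the problem to bounding $\sum_\zeta\tfrac1{|z-\zeta|}$, where $\zeta$ runs over the at most $4n(\gamma r)$ points $\pm\sqrt c,\ \pm\sqrt c-\tfrac i2$ with $c$ a near zero or pole of $f$. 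Applying Cartan's Lemma (Lemma~\ref{CartanLemma}) to these points with a parameter $B$ produces exceptional disks of total radius $2B$ outside which a relabelling $\hat\zeta_\lambda$ satisfies $|z-\hat\zeta_\lambda|>B\lambda/(4n(\gamma r))$; hence $\sum_\zeta\tfrac1{|z-\zeta|}=O\bigl(\tfrac{n(\gamma r)}{B}\ln^+n(\gamma r)\bigr)$, and multiplying back by $(\sqrt r+\tfrac14)/\sqrt r=O(1)$ the near terms become $O\bigl(\tfrac{n(\gamma r)}{B}\ln^+n(\gamma r)\bigr)$.

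What remains --- and this is the delicate part --- is to choose $B$ and to pack all the Cartan disks into one exceptional set $E\subset(1,\infty)$ of finite logarithmic measure. The choice $B=\sqrt r/\ln^\gamma r$ turns the near-term bound into $O\bigl(\tfrac{n(\gamma r)\ln^\gamma r\,\ln^+n(\gamma r)}{\sqrt r}\bigr)$, which has the size of the second term in \eqref{F1} whenever $n(\gamma r)\ge2$; the residual case $n(\gamma r)\le1$ forces $f$ to have at most one zero/pole in $D(0;\gamma r)$, which is eventually ``far'' and handled by the far-term estimate, the bounded set of $r$ where it is still ``near'' being thrown into $E$. To make $B$ independent of the particular $r$ I would split $(1,\infty)$ into dyadic blocks $[2^k,2^{k+1})$ and apply Cartan once per block, to the finitely many points coming from zeros and poles of modulus $<\gamma 2^{k+1}$, with $B=B_k$ of order $2^{k/2}/k^\gamma$ (so that $B_k\ge\sqrt r/\ln^\gamma r$ on the block up to a constant). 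A $z$-disk of radius $\rho$ then contributes to $E$ a set of logarithmic measure $O(\rho/2^{k/2})$, so block $k$ contributes $O(B_k/2^{k/2})=O(k^{-\gamma})$, and $\sum_k k^{-\gamma}<\infty$ exactly because $\gamma>1$; adding the bounded intervals that handle small $k$ (where \eqref{B5} with $R=\gamma r$ is not yet valid) and small $n(\gamma r)$ gives $E$ of finite logarithmic measure, and \eqref{F1} follows. For the last statement, if $\sigma<\infty$ then $T(\gamma r,f)=O(r^{\sigma+\varepsilon/3})$, $n(\gamma r)\le\tfrac1{\ln2}\bigl(N(2\gamma r,f)+N(2\gamma r,\tfrac1f)\bigr)=O(r^{\sigma+\varepsilon/3})$, and $\ln^\gamma r\,\ln^+n(\gamma r)=O(r^{\varepsilon/3})$; plugging these into \eqref{F1} makes its right-hand side $O(r^{\sigma-\frac12+\varepsilon})$, so $\bigl|\ln|f(x^+)/f(x)|\bigr|\le r^{\sigma-\frac12+\varepsilon}$ for large $r\notin E\cup[0,1]$, and exponentiating gives the claimed two-sided bound.
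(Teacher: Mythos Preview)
Your proposal is correct and follows the same overall scheme as the paper: start from the Poisson--Jensen difference formula \eqref{B5}, bound the kernel and the ``$R^2$'' sums by Lemma~\ref{Estimate1}(i)--(ii), reduce the remaining two sums via Lemma~\ref{Estimate1}(iii) with $\alpha=1$ to a Cartan-type sum, and apply Lemma~\ref{CartanLemma} on geometric blocks so that the total logarithmic measure of the exceptional annuli is dominated by $\sum_p p^{-\gamma}<\infty$.

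The implementation differs in one genuine respect. The paper stays in the $x$-plane throughout: it absorbs the awkward terms $1/|(z+\tfrac{i}{2})^2-a_\nu|$ by enlarging the point set to the single sequence $\{c_\lambda\}=\{a_\nu,b_\mu,a_\nu^{-},b_\mu^{-}\}$, so that everything becomes $(\sqrt r+\tfrac14)\sum_{|c_\lambda|<\beta r}\tfrac{1}{|x-c_\lambda|}$, and then applies Cartan once to the $c_\lambda$'s on each block $[\beta^p,\beta^{p+1}]$ with parameter $B=\beta^p/\ln^\beta(\beta^p)$; there is no far/near split. Your route instead pulls back to the $z$-plane via the factorization $z^2-a_\nu=(z-\sqrt{a_\nu})(z+\sqrt{a_\nu})$, together with $\max(|z-w|,|z+w|)\ge |z|$, and applies Cartan to the square-rooted points $\pm\sqrt{c}$, $\pm\sqrt{c}-\tfrac{i}{2}$. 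This costs you the extra far/near decomposition and the conversion of $z$-disks to $r$-annuli, but it makes the passage to a Cartan sum completely transparent; in particular it sidesteps the comparability of $1/|(z+\tfrac{i}{2})^2-a_\nu|$ with $1/|x-a_\nu^{-}|$ that the paper's shortcut relies on. Either way one lands on the same exceptional-set bound, and your deduction of the finite-order corollary from \eqref{F1} is exactly the paper's.
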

\begin{proof}
	For every complex number $x$, we write $|x|=r$ and let $R$ be a positive number such that $R>\frac{1}{4}$ and $r<(\sqrt{R}-\frac{1}{2})^2$.\ \ Then we use the Poisson-Jensen formula as in \S\ref{sec:WilsonLogLemma} to obtain \eqref{B5}.\ \ Next we let $\alpha>1$ be arbitrary and let $R=(\sqrt{\alpha r}+\frac{1}{2})^2$.\ \ For sufficiently large $r$, namely for $\displaystyle r>\frac{1}{4(\alpha-\sqrt{\alpha})^2}$, we have $\sqrt{\alpha r}<\sqrt{\alpha r}+\frac{1}{2}<\sqrt{\alpha^2 r}$.\ \ Thus applying Lemma~\ref{Estimate1} and \eqref{B13} to \eqref{B5}, we have
	\begin{align}
		\label{F2}
		{\small\begin{aligned}
			&\ \ \ \,\left|\ln\left|\frac{f(x^+)}{f(x)}\right|\right| \\
			&\le \frac{2(\sqrt{\alpha r}+\frac{1}{2})^2(2\sqrt{r}+\frac{1}{2})}{[(\sqrt{\alpha r}+\frac{1}{2})^2-r][(\sqrt{\alpha r}+\frac{1}{2})^2-(\sqrt{r}+\frac{1}{2})^2]}\left[T\left(\left(\sqrt{\alpha r}+\frac{1}{2}\right)^2,f\right)+\ln^+\frac{1}{|f(0)|}\right] \\
			&\ \ \ \,+\frac{2\sqrt{r}+\frac{1}{2}}{(\sqrt{\alpha r}+\frac{1}{2})^2-(\sqrt{r}+\frac{1}{2})^2}\,n\left(\left(\sqrt{\alpha r}+\frac{1}{2}\right)^2\right) + \left(\sqrt{r}+\frac{1}{4}\right)\sum_{|c_\lambda|<\alpha^2 r}{\frac{1}{|z^2-c_\lambda|}} \\
			&\le \frac{\alpha^2 (4\sqrt{r}+1)}{(\alpha-1)[(\alpha-1)r-\sqrt{r}-\frac{1}{4}]}\left[T(\alpha^2 r,f)+\ln^+\frac{1}{|f(0)|}\right] \\
			&\ \ \ \,+ \frac{2\sqrt{r}+\frac{1}{2}}{(\alpha-1)r-\sqrt{r}-\frac{1}{4}}\,n(\alpha^2 r) + \left(\sqrt{r}+\frac{1}{4}\right)\sum_{|c_\lambda|<\alpha^2 r}{\frac{1}{|x-c_\lambda|}},
		\end{aligned}}
	\end{align}
	where the sequence $\{c_\lambda\}_\lambda$ is the union of the four sequences $\{a_\nu\}_\nu$, $\{b_\mu\}_\mu$, $\{a_\nu^-\}_\nu$ and $\{b_\mu^-\}_\mu$, and is ordered by increasing modulus. \\

	Next, we rename the arbitrary number $\alpha^2>1$ by the arbitrary number $\beta>1$, and estimate the series on the right-hand side of \eqref{F2} as in \cite[(7.6)-(7.9)]{Gundersen}.\ \ We only need to consider those complex numbers $x$ with modulus $r>1$, so we let
	\begin{align}
		\label{F3} \beta^p\le r\le\beta^{p+1},
	\end{align}
	where $p$ is a positive integer sufficiently large so that
	\begin{align}
		\label{F3A} n(\beta^{p+2})\ge e^{\frac{1}{\beta-1}}.
	\end{align}
	Then applying Lemma~\ref{CartanLemma} to the points $c_1,c_2,\ldots,c_{n(\beta^{p+2})}$ with $\displaystyle B=\frac{\beta^p}{\ln^\beta(\beta^p)}$, we deduce that there exist finitely many closed disks $D_1,D_2,\ldots,D_q$ with sum of radii $2B$, such that for every $x$ outside all of these disks, there is a permutation of the points, say $d_1,d_2,\ldots,d_{n(\beta^{p+2})}$, that satisfies the inequality
	\begin{align}
		\label{F4} |x-d_\lambda| > \lambda\frac{B}{n(\beta^{p+2})} = \lambda\frac{\beta^p}{n(\beta^{p+2})\ln^\beta(\beta^p)}
	\end{align}
	for every $\lambda=1,2,\ldots,n(\beta^{p+2})$.\ \ \eqref{F3}, \eqref{F3A} and \eqref{F4} imply that for every $x$ with modulus $\beta^p\le r\le\beta^{p+1}$ and is located outside all of the disks $D_j$, we have
	\begin{align*}
		\sum_{|c_\lambda|<\beta r}{\frac{1}{|x-c_\lambda|}} &\le \sum_{\lambda=1}^{n(\beta^{p+2})}{\frac{1}{|x-d_\lambda|}} < \frac{n(\beta^{p+2})\ln^\beta(\beta^p)}{\beta^p}\sum_{\lambda=1}^{n(\beta^{p+2})}{\frac{1}{\lambda}} \\
		&\le \beta r^{-1}n(\beta^{p+2})\ln^\beta r\{1+\ln^+ [n(\beta^{p+2})]\} \\
		&\le \beta r^{-1}n(\beta^2 r)\ln^\beta r\{1+\ln^+ [n(\beta^2 r)]\} \\
		&\le \beta^2\frac{n(\beta^2 r)}{r} \ln^\beta{r} \ln^+ [n(\beta^2 r)].
	\end{align*}
	Therefore for every such $x$, we have
	\begin{align*}
		\left|\ln\left|\frac{f(x^+)}{f(x)}\right|\right| \le &\;\frac{\beta(4\sqrt{r}+1)}{(\sqrt{\beta}-1)[(\sqrt{\beta}-1)r-\sqrt{r}-\frac{1}{4}]}\left[T(\beta r,f) + \ln^+\frac{1}{|f(0)|}\right] \\
		&+ \frac{2\sqrt{r}+\frac{1}{2}}{(\sqrt{\beta}-1)r-\sqrt{r}-\frac{1}{4}}\,n(\beta r) \\
		&+\left(\sqrt{r}+\frac{1}{4}\right)\beta^2\frac{n(\beta^2 r)}{r} \ln^\beta{r} \ln^+ [n(\beta^2 r)] \\
		\le &\;A_\beta\left\{\frac{T(\beta r,f)}{\sqrt{r}} + \frac{n(\beta^2 r)}{\sqrt{r}}\ln^\beta{r} \ln^+ [n(\beta^2 r)]\right\},
	\end{align*}
	where the constant $A_\beta$ depends on $\beta$ only.\ \ Since the functions $T(r,f)$ and $\ln r$ are increasing, we can actually write
	\begin{align*}
		\left|\ln\left|\frac{f(x^+)}{f(x)}\right|\right| & \le A_\beta\left\{\frac{T(\beta^2 r,f)}{\sqrt{r}} + \frac{n(\beta^2 r)}{\sqrt{r}}\ln^{\beta^2}{r} \ln^+ [n(\beta^2 r)]\right\} \\
		&= A_\gamma\left\{\frac{T(\gamma r,f)}{\sqrt{r}} + \frac{n(\gamma r)}{\sqrt{r}}\ln^\gamma{r} \ln^+ [n(\gamma r)]\right\}
	\end{align*}
	by renaming $\beta^2>1$ as $\gamma>1$, where $A_\gamma$ is a constant depending on $\gamma$ only. \\

	It now remains to show that the exceptional set $E\subset(1,\infty)$ is of finite logarithmic measure.\ \ For every positive integer $p$ sufficiently large so that \eqref{F3A} holds, say $p\ge p_0\ge 1$, the sum of the diameters of the exceptional disks is $\displaystyle 2\cdot2B=\frac{4\beta^p}{\ln^\beta(\beta^p)}$.\ \ Now we can write the exceptional set as
	\[
		E=\bigcup_{p=p_0}^\infty{E_p},
	\]
	where $E_p$ is the intersection of the closed interval $[\beta^p,\beta^{p+1}]$ and the finite union of all the annuli generated by revolving the disks $D_j$ about the origin.\ \ Since $E_p$ has Lebesgue measure at most $\displaystyle\frac{4\beta^p}{\ln^\beta(\beta^p)}$, it follows that
	\begin{align*}
		\int_E{\frac{dr}{r}} &= \sum_{p=p_0}^\infty{\int_{E_p}{\frac{dr}{r}}} \le \sum_{p=p_0}^\infty{\int_{\left[\beta^p,\beta^p+\frac{4\beta^p}{\ln^\beta(\beta^p)}\right]}{\frac{dr}{r}}} \\
		&= \sum_{p=p_0}^\infty{\left\{\ln\left[\beta^p+\frac{4\beta^p}{\ln^\beta(\beta^p)}\right] - \ln(\beta^p)\right\}} \\
		&\le \sum_{p=p_0}^\infty{\frac{4}{\ln^\beta(\beta^p)}} < \infty,
	\end{align*}
	and so $E$ has finite logarithmic measure. \\
	
	Finally, we suppose in particular that $f$ has finite order $\sigma$.\ \ \eqref{F1} implies that for every $\gamma>1$ and $\varepsilon>0$, there exists an exceptional set $E$ of finite logarithmic measure such that
	\begin{align*}
		\left|\ln\left|\frac{f(x^+)}{f(x)}\right|\right| &= O\left(\frac{T(\gamma r,f)}{\sqrt{r}} + \frac{n(\gamma r)}{\sqrt{r}}\ln^\gamma{r} \ln^+ [n(\gamma r)]\right) \\
		&= O(r^{\sigma-\frac{1}{2}+\frac{\varepsilon}{2}} + (\gamma r)^{\sigma-\frac{1}{2}}\ln^\gamma{r}\ln^+ (\gamma r)^\sigma) \\
		&= O(r^{\sigma-\frac{1}{2}+\frac{\varepsilon}{2}})
	\end{align*}
	as $r\to\infty$ outside $E$, where the second equality follows from the fact that the exponents of convergence of zeros and poles of $f$ are lower than or equal to $\sigma$.\ \ This gives $\left|\ln\left|\frac{f(x^+)}{f(x)}\right|\right|\le r^{\sigma-\frac{1}{2}+\varepsilon}$ for every $x$ with modulus $r\notin E\cup[0,1]$, and so we have
	\[
		e^{-r^{\sigma-\frac{1}{2}+\varepsilon}} \le \left|\frac{f(x^+)}{f(x)}\right| \le e^{r^{\sigma-\frac{1}{2}+\varepsilon}}
	\]
	for every $x$ with modulus $r\notin E\cup[0,1]$.
\end{proof}

An exceptional set of radii $r$ of finite logarithmic measure can sometimes be of infinite Lebesgue measure, but it becomes sparser and sparser as $r\to\infty$. \\

In the above lemma, we generated an annular exceptional set by revolving the exceptional disks $D_j$ about the origin.\ \ In fact, the exceptional set can also be generated in a radial manner.\ \ This gives us the following lemma, in which the exceptional set has zero Lebesgue measure instead of finite logarithmic measure.

\begin{lemma}
\label{Estimate7}
	Let $f\not\equiv 0$ be a meromorphic function.\ \ Then for every $\gamma>1$, there exists a constant $B_\gamma$ depending only on $\gamma$, such that for almost every $\theta\in [-\pi,\pi)$, we have
	\[
		\left|\ln\left|\frac{f((re^{i\theta})^+)}{f(re^{i\theta})}\right|\right| \le B_\gamma\left\{\frac{T(\gamma r,f)}{\sqrt{r}} + \frac{n(\gamma r)}{\sqrt{r}}\ln^\gamma{r} \ln^+ [n(\gamma r)]\right\}
	\]
	for every sufficiently large $r$.\ \ In particular, if $f$ has finite order $\sigma$, then for every $\varepsilon>0$ and almost every $\theta\in [-\pi,\pi)$, we have
	\[
		e^{-r^{\sigma-\frac{1}{2}+\varepsilon}} \le \left|\frac{f((re^{i\theta})^+)}{f(re^{i\theta})}\right| \le e^{r^{\sigma-\frac{1}{2}+\varepsilon}}
	\]
	for every sufficiently large $r$.
\end{lemma}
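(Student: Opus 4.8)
The plan is to rerun the proof of Lemma~\ref{Estimate6} essentially verbatim up to the application of Cartan's Lemma (Lemma~\ref{CartanLemma}), but to dispose of the resulting family of exceptional disks \emph{radially} rather than by revolving them about the origin. Fix $\gamma>1$, write $\beta:=\sqrt\gamma>1$, and for each large integer $p$ let $D_1^{(p)},\dots,D_{q_p}^{(p)}$, with radii $r_1^{(p)},\dots,r_{q_p}^{(p)}$, be the Cartan disks produced in that proof for the points $c_1,\dots,c_{n(\beta^{p+2})}$ with $B=\beta^p/\ln^\beta(\beta^p)$; recall $\sum_j r_j^{(p)}=2B$, and that for every $x$ with $\beta^p\le|x|\le\beta^{p+1}$ lying outside $\bigcup_j D_j^{(p)}$ one gets
\[
	\left|\ln\left|\frac{f(x^+)}{f(x)}\right|\right| \le A_\gamma\left\{\frac{T(\gamma r,f)}{\sqrt r}+\frac{n(\gamma r)}{\sqrt r}\ln^\gamma r\,\ln^+[n(\gamma r)]\right\},\qquad r=|x|.
\]

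First I would note that, once $p$ is large enough that $2B=2\beta^p/\ln^\beta(\beta^p)<\tfrac12\beta^p$, any disk $D_j^{(p)}$ meeting the annulus $\{\beta^p\le|x|\le\beta^{p+1}\}$ has centre modulus $d_j\ge\beta^p-2B>\tfrac12\beta^p$ and radius $r_j^{(p)}\le 2B<\tfrac12\beta^p\le d_j$, so it does not contain the origin. Hence the set of directions $\theta$ for which the ray $\{se^{i\theta}:s>0\}$ meets $D_j^{(p)}$ is an arc of angular length at most $2\arcsin\!\big(r_j^{(p)}/d_j\big)\le 2\pi r_j^{(p)}/\beta^p$. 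Letting $\Theta_p\subset[-\pi,\pi)$ be the set of $\theta$ such that $se^{i\theta}\in\bigcup_j D_j^{(p)}$ for some $s\in[\beta^p,\beta^{p+1}]$, this gives
\[
	|\Theta_p|\le \frac{2\pi}{\beta^p}\sum_{j=1}^{q_p} r_j^{(p)}=\frac{4\pi}{\ln^\beta(\beta^p)}=\frac{4\pi}{(p\ln\beta)^\beta}.
\]
Since $\beta>1$, the series $\sum_p|\Theta_p|$ converges, so $\Theta^*:=\bigcap_{N}\bigcup_{p\ge N}\Theta_p$ has Lebesgue measure zero; this is the radial counterpart of the exceptional-set construction in \cite[(7.6)--(7.9)]{Gundersen}.

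Next, for any $\theta\notin\Theta^*$ there is $p_0=p_0(\theta)$ with $\theta\notin\Theta_p$ for all $p\ge p_0$, so for every $r\ge\beta^{p_0}$, picking $p\ge p_0$ with $\beta^p\le r\le\beta^{p+1}$, the point $x=re^{i\theta}$ avoids $\bigcup_j D_j^{(p)}$ and the displayed estimate applies. Thus the first assertion holds with $B_\gamma:=A_\gamma$, for every $\theta\notin\Theta^*$ and every sufficiently large $r$. Finally, if $f$ has finite order $\sigma$, then $T(\gamma r,f)=O(r^{\sigma+\varepsilon/2})$ and, since the exponents of convergence of the zeros and poles are $\le\sigma$, also $n(\gamma r)=O(r^{\sigma+\varepsilon/2})$; the right-hand side is therefore $O\!\big(r^{\sigma-\frac12+\varepsilon/2}\ln^\gamma r\,\ln^+(\gamma r)^\sigma\big)=O(r^{\sigma-\frac12+\varepsilon})$, which is $\le r^{\sigma-\frac12+\varepsilon}$ for all large $r$, yielding the exponential bounds exactly as in Lemma~\ref{Estimate6}.

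The only work beyond quoting the proof of Lemma~\ref{Estimate6} is the elementary angular-width estimate for the radial shadow of a Cartan disk, together with the observation that the relevant disks stay away from the origin (automatic from $\sum_j r_j^{(p)}=2B\ll\beta^p$); the remainder is a Borel--Cantelli-type argument. I do not expect a genuine obstacle, only the need to phrase things carefully because the family of exceptional disks changes with the scale $p$, so ``$\theta$ good for all large $r$'' must be rendered as ``$\theta\notin\Theta_p$ for all large $p$''.
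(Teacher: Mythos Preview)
Your proposal is correct and follows precisely the approach the paper indicates: the paper does not give a detailed proof of this lemma, but remarks just before stating it that the exceptional set from Lemma~\ref{Estimate6} ``can also be generated in a radial manner,'' and your angular-shadow estimate combined with the Borel--Cantelli argument is exactly the natural way to make that hint rigorous. The constants, the handling of the scale-dependent Cartan families, and the finite-order deduction all match what the paper's proof of Lemma~\ref{Estimate6} would give when adapted radially.
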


Let us put Lemma~\ref{Estimate6} into a more useful form.
\begin{theorem}
\label{PtwiseEst}
\textup{(Pointwise estimate of the logarithmic Wilson difference, radial version)}
	Let $f\not\equiv 0$ be a meromorphic function.\ \ Then for every $\gamma>1$, there exist a subset $E_1\subset(1,\infty)$ of finite logarithmic measure and a constant $A_\gamma$ depending only on $\gamma$, such that for every complex number $x$ with modulus $r\notin E_1\cup[0,1]$, we have
	\begin{align}
		\label{F5} \left|\frac{(\D_\W f)(x)}{f(x)}\right| \le 2e^{A_\gamma\left\{\frac{T(\gamma r,f)}{\sqrt{r}} + \frac{n(\gamma r)}{\sqrt{r}}\ln^\gamma{r} \ln^+ [n(\gamma r)]\right\}}.
	\end{align}
	In particular, if $f$ is of finite order $\sigma$, then for every positive integer $k$ and every $\varepsilon>0$, there exists a subset $E\subset(1,\infty)$ of finite logarithmic measure such that for every complex number $x$ with modulus $r\notin E\cup[0,1]$, we have
	\[
		\left|\frac{(\D_\W^k f)(x)}{f(x)}\right| \le e^{kr^{\sigma-\frac{1}{2}+\varepsilon}}.
	\]
\end{theorem}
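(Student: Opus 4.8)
The plan is to deduce both assertions from Lemma~\ref{Estimate6} by expressing $\D_\W f$ through the two shift quotients $f(x^+)/f(x)$ and $f(x^-)/f(x)$. The starting point is the identity
\[
	\frac{(\D_\W f)(x)}{f(x)} = \frac{1}{2iz}\left(\frac{f(x^+)}{f(x)} - \frac{f(x^-)}{f(x)}\right), \qquad z = \sqrt{x},
\]
valid at every $x$ that is neither a zero nor a pole of $f$, so that $\bigl|\frac{(\D_\W f)(x)}{f(x)}\bigr|\le\frac{1}{2\sqrt{r}}\bigl(\bigl|\frac{f(x^+)}{f(x)}\bigr|+\bigl|\frac{f(x^-)}{f(x)}\bigr|\bigr)$ with $r=|x|$. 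Lemma~\ref{Estimate6} bounds $\bigl|\ln\bigl|\frac{f(x^+)}{f(x)}\bigr|\bigr|$ off a set $E$ of finite logarithmic measure; I would then observe that running its proof verbatim with $(z+\tfrac{i}{2})^2$ replaced by $(z-\tfrac{i}{2})^2$ throughout --- all the relevant estimates of Lemmas~\ref{Estimate1} and \ref{Estimate2} being symmetric under $\tfrac{i}{2}\mapsto-\tfrac{i}{2}$, just as \eqref{B14} mirrors \eqref{B12} in the proof of Theorem~\ref{WilsonLogLemma} --- produces a set $E'$ of finite logarithmic measure and the same bound for $\bigl|\ln\bigl|\frac{f(x^-)}{f(x)}\bigr|\bigr|$.

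Setting $E_1:=E\cup E'$, which still has finite logarithmic measure, and using $|w|\le e^{|\ln|w||}$, I obtain for every $x$ with $r=|x|\notin E_1\cup[0,1]$ that $\bigl|\frac{f(x^+)}{f(x)}\bigr|$ and $\bigl|\frac{f(x^-)}{f(x)}\bigr|$ are both at most $e^{A_\gamma\{\cdots\}}$, whence
\[
	\left|\frac{(\D_\W f)(x)}{f(x)}\right| \le \frac{1}{\sqrt{r}}\,e^{A_\gamma\{\cdots\}} \le 2\,e^{A_\gamma\{\cdots\}}
\]
since $r>1$; this is \eqref{F5}. For the finite-order statement I would first isolate the case $k=1$: for any meromorphic $g\not\equiv0$ of order at most $\sigma$, the concluding (finite-order) assertion of Lemma~\ref{Estimate6} and its $x^-$ mirror give, for each $\varepsilon>0$, a set of finite logarithmic measure off which $\bigl|\frac{g(x^{\pm})}{g(x)}\bigr|\le e^{r^{\sigma-\frac12+\varepsilon}}$, and therefore $\bigl|\frac{(\D_\W g)(x)}{g(x)}\bigr|\le\frac{1}{2\sqrt{r}}\cdot 2e^{r^{\sigma-\frac12+\varepsilon}}\le e^{r^{\sigma-\frac12+\varepsilon}}$ for $r>1$.

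The general case then follows by telescoping. Set $g_0:=f$ and $g_j:=\D_\W^j f$. If $\D_\W^j f\equiv0$ for some $j<k$, then $\D_\W^k f\equiv0$ and the bound is trivial; otherwise $g_0,\dots,g_{k-1}$ are all $\not\equiv0$, and since $\D_\W$ does not increase the order --- the inequality $\sigma(\D_\W h)\le\sigma(h)$ being a consequence of Theorem~\ref{WilsonLogLemma} and Lemma~\ref{Estimate3}, as already invoked in the proof of Lemma~\ref{Estimate4} --- each $g_j$ has order at most $\sigma$. Applying the $k=1$ case to $g_0,\dots,g_{k-1}$ with a common $\varepsilon$ and letting $E$ be the union of the resulting finitely many exceptional sets (still of finite logarithmic measure), I obtain for $r=|x|\notin E\cup[0,1]$
\[
	\left|\frac{(\D_\W^k f)(x)}{f(x)}\right| = \prod_{j=1}^{k}\left|\frac{(\D_\W g_{j-1})(x)}{g_{j-1}(x)}\right| \le \prod_{j=1}^{k} e^{r^{\sigma-\frac12+\varepsilon}} = e^{k\,r^{\sigma-\frac12+\varepsilon}},
\]
as claimed. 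There is no genuine obstacle here: the theorem is essentially a corollary of Lemma~\ref{Estimate6}, and the only points needing attention are the $x^-$ analogue of that lemma (immediate from the $\pm\tfrac{i}{2}$-symmetry of its proof) and, in the iteration, the bound $\sigma(\D_\W^j f)\le\sigma$, which keeps the exponent $\sigma-\frac12+\varepsilon$ from drifting upward along the chain, together with the routine fact that a finite union of sets of finite logarithmic measure again has finite logarithmic measure.
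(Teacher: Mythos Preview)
Your proposal is correct and follows essentially the same approach as the paper: split $\D_\W f/f$ into the two shift quotients, apply Lemma~\ref{Estimate6} (and its $x^-$ mirror) to get \eqref{F5} off the union of two exceptional sets, and then telescope the $k=1$ finite-order bound along $g_j=\D_\W^j f$ using $\sigma(\D_\W^j f)\le\sigma$. Your write-up is in fact a bit more careful than the paper's (you explicitly justify the $x^-$ analogue, handle the case $\D_\W^j f\equiv0$, and note that both Theorem~\ref{WilsonLogLemma} and Lemma~\ref{Estimate3} are needed for the order bound, whereas the paper cites only Lemma~\ref{Estimate3}).
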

\begin{proof}
	Lemma~\ref{Estimate6} implies that \eqref{F5} holds for every $x$ with modulus $r$ outside the union of two exceptional sets of finite logarithmic measure, which is still of finite logarithmic measure.
	If $f$ is of finite order $\sigma$.\ \ Then \eqref{F5} implies that for every $\varepsilon>0$,
	\begin{align}
		\label{F6} \left|\frac{(\D_\W f)(x)}{f(x)}\right| \le \frac{e^{r^{\sigma-\frac{1}{2}+\frac{\varepsilon}{2}}}}{\sqrt{r}} \le e^{r^{\sigma-\frac{1}{2}+\varepsilon}},
	\end{align}
	which holds for every $x$ with modulus $r$ outside an exceptional set of finite logarithmic measure.\ \ Since Lemma~\ref{Estimate3} implies that $\D_\W^l f$ has order at most $\sigma$ for every positive integer $l$, applying \eqref{F6} $k$ times, we have
	\[
		\left|\frac{(\D_\W^k f)(x)}{f(x)}\right| \le e^{kr^{\sigma-\frac{1}{2}+\varepsilon}}
	\]
	for every $x$ with modulus $r$ outside the union of $k$ exceptional sets of finite logarithmic measure, which is still of finite logarithmic measure.
\end{proof}

Similarly, Lemma~\ref{Estimate7} gives the following theorem.

\begin{theorem}
\textup{(Pointwise estimate of the logarithmic Wilson difference, angular version)}
	Let $f\not\equiv 0$ be a meromorphic function.\ \ Then for every $\gamma>1$, there exists a constant $B_\gamma$ depending only on $\gamma$, such that for almost every $\theta\in [-\pi,\pi)$, we have
	\[
		\left|\frac{(\D_\W f)(re^{i\theta})}{f(re^{i\theta})}\right| \le 2e^{B_\gamma\left\{\frac{T(\gamma r,f)}{\sqrt{r}} + \frac{n(\gamma r)}{\sqrt{r}}\ln^\gamma{r} \ln^+ [n(\gamma r)]\right\}}
	\]
	for every sufficiently large $r$.\ \ In particular, if $f$ is of finite order $\sigma$, then for every positive integer $k$, every $\varepsilon>0$ and almost every $\theta\in [-\pi,\pi)$, we have
	\[
		\left|\frac{(\D_\W^k f)(re^{i\theta})}{f(re^{i\theta})}\right| \le e^{kr^{\sigma-\frac{1}{2}+\varepsilon}}
	\]
	for every sufficiently large $r$.
\end{theorem}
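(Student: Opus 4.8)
The plan is to follow the proof of Theorem~\ref{PtwiseEst} essentially line by line, replacing every appeal to Lemma~\ref{Estimate6} by the corresponding appeal to Lemma~\ref{Estimate7}; this trades the exceptional set of radii of finite logarithmic measure for an exceptional null set of arguments $\theta$. First I would expand, using \eqref{A1},
\[
	\frac{(\D_\W f)(re^{i\theta})}{f(re^{i\theta})} = \frac{1}{2i\sqrt{re^{i\theta}}}\left(\frac{f((re^{i\theta})^+)}{f(re^{i\theta})} - \frac{f((re^{i\theta})^-)}{f(re^{i\theta})}\right),
\]
and bound the two ratios on the right. Lemma~\ref{Estimate7} bounds $\bigl|f((re^{i\theta})^+)/f(re^{i\theta})\bigr|$ by $e^{B_\gamma\{\cdots\}}$ for almost every $\theta$ and all sufficiently large $r$; the same bound for the $x^-$-shift follows from that lemma applied with the other branch of the square root (equivalently, by rerunning the proof of Lemma~\ref{Estimate6} with $x^-$ in place of $x^+$, exactly as \eqref{B14} was deduced from \eqref{B12} in \S\ref{sec:WilsonLogLemma}), at the cost of discarding one further null set of $\theta$. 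Taking $B_\gamma$ to be the larger of the two constants and using $\frac{1}{2\sqrt{r}}\le 1$ for large $r$, the two bounds combine to give the first displayed inequality, keeping the harmless factor $2$ out in front to match the radial version.

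For the finite-order conclusion I would apply the second half of Lemma~\ref{Estimate7} to obtain $e^{-r^{\sigma-\frac12+\varepsilon}} \le \bigl|f((re^{i\theta})^\pm)/f(re^{i\theta})\bigr| \le e^{r^{\sigma-\frac12+\varepsilon}}$ for almost every $\theta$ and all large $r$, whence
\[
	\left|\frac{(\D_\W f)(re^{i\theta})}{f(re^{i\theta})}\right| \le \frac{1}{\sqrt{r}}\,e^{r^{\sigma-\frac12+\varepsilon}} \le e^{r^{\sigma-\frac12+\varepsilon}}
\]
for large $r$. To reach $\D_\W^k$, I would invoke Lemma~\ref{Estimate3} — as in the proof of Theorem~\ref{PtwiseEst} — to know that each $\D_\W^l f$ has order at most $\sigma$, so the one-step estimate applies to every $\D_\W^{j} f$ with $0\le j\le k-1$ (if some $\D_\W^{j_0} f \equiv 0$ then $\D_\W^k f \equiv 0$ and there is nothing to prove); multiplying the $k$ resulting inequalities telescopes to $\bigl|(\D_\W^k f)(re^{i\theta})/f(re^{i\theta})\bigr| \le e^{kr^{\sigma-\frac12+\varepsilon}}$.

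The argument is essentially bookkeeping, and the only point calling for a little care is the quantifier structure: unlike in Theorem~\ref{PtwiseEst}, where a single exceptional set of radii is removed uniformly in $\theta$, here each application of Lemma~\ref{Estimate7} removes a null set of directions and leaves a $\theta$-dependent threshold $r_0(\theta)$, so one must check that a finite union of null sets is again null and that the finitely many thresholds may be replaced by their maximum, which is what makes the statement ``for almost every $\theta$, the estimate holds for all sufficiently large $r$'' legitimate. Beyond this there is no analytic obstacle not already absorbed into Lemma~\ref{Estimate7}.
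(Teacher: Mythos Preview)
Your proposal is correct and takes essentially the same approach as the paper, which in fact does not write out a proof at all but simply says ``Similarly, Lemma~\ref{Estimate7} gives the following theorem.'' Your write-up is a faithful and slightly more careful unpacking of that sentence---in particular your remarks about the quantifier structure (finite unions of null sets, taking the maximum of the $\theta$-dependent thresholds) and the degenerate case $\D_\W^{j_0} f\equiv 0$ are details the paper leaves implicit.
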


\subsection{Applications of the pointwise estimate in Wilson difference equations and Wilson interpolation equations}

The pointwise estimate of the logarithmic Wilson difference plays a crucial role in Wilson difference equations.\ \ A typical application as in \cite{Chiang-Feng2} yields the following result about the growth of meromorphic solutions to a linear Wilson difference equation with {polynomial} coefficients.

\begin{theorem}
\label{WDE}
Let $n$ be a positive integer, $p_0,p_1,\ldots,p_n$ be {polynomials} such that {$\deg p_0>\deg p_k$} for each $k\in\{1,2,\ldots,n\}$, and $f$ be a non-trivial meromorphic solution to the Wilson difference equation
\begin{align}
	\label{F7} p_n\D_\W^n y + p_{n-1}\D_\W^{n-1} y + \cdots + p_1\D_\W y + p_0y = 0.
\end{align}
Then $f$ has order $\sigma\ge\frac{1}{2}$.
\end{theorem}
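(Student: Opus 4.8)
The plan is to argue by contradiction using the radial pointwise estimate of Theorem~\ref{PtwiseEst}.  Assume $f$ is a non-trivial meromorphic solution of \eqref{F7} of order $\sigma<\frac12$ (if $\sigma\ge\frac12$, including $\sigma=+\infty$, there is nothing to prove).  Since $f\not\equiv0$, dividing \eqref{F7} by $f$ yields the identity of meromorphic functions
\[
	p_0 = -\sum_{k=1}^{n}{p_k\,\frac{\D_\W^k f}{f}}.
\]

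Next I would fix $\varepsilon>0$ small enough that $\sigma-\frac12+\varepsilon<0$, which is possible since $\sigma<\frac12$.  By Theorem~\ref{PtwiseEst} there is a set $E\subset(1,\infty)$ of finite logarithmic measure such that $\left|\frac{(\D_\W^k f)(x)}{f(x)}\right|\le e^{kr^{\sigma-\frac12+\varepsilon}}$ for every $k\in\{1,\ldots,n\}$ and every $x$ with $r=|x|\notin E\cup[0,1]$; since the exponent is negative this quantity is bounded by $e^{n}$ for all $r\ge1$.  Hence, for all such $x$,
\[
	|p_0(x)| \le \sum_{k=1}^{n}{|p_k(x)|\left|\frac{(\D_\W^k f)(x)}{f(x)}\right|} \le e^{n}\sum_{k=1}^{n}{|p_k(x)|}.
\]

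Then I would compare polynomial growth rates.  Set $d_0:=\deg p_0$ and $d:=\max_{1\le k\le n}\deg p_k$, so that $d<d_0$.  For large $r$ the left-hand side is bounded below by $c\,r^{d_0}$ for some constant $c>0$ (half the modulus of the leading coefficient of $p_0$; valid for \emph{every} $x$ with $|x|=r$), while the right-hand side is $O(r^{d})$, so $r^{d_0-d}=O(1)$ for all sufficiently large $r\notin E$.  Because $E$ has finite logarithmic measure, $(1,\infty)\setminus E$ is unbounded, so this bound would have to hold along a sequence $r_m\to\infty$, contradicting $d_0-d\ge1$.  Therefore $\sigma\ge\frac12$, as claimed.

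The only point requiring care is that the pointwise estimate of Theorem~\ref{PtwiseEst} must be available along radii tending to infinity; this is exactly what the finiteness of the logarithmic measure of the exceptional set $E$ guarantees.  (Alternatively, one could invoke the angular version of the pointwise estimate and let $r\to\infty$ along a fixed admissible ray, thereby avoiding the exceptional set altogether.)  The remaining steps are a routine comparison of degrees.
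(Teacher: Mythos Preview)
Your proof is correct and follows essentially the same approach as the paper: argue by contradiction assuming $\sigma<\tfrac12$, divide the equation through and apply Theorem~\ref{PtwiseEst} to bound each $\bigl|\D_\W^k f/f\bigr|$ by $O(1)$ off an exceptional set of finite logarithmic measure, and then compare polynomial growth rates.  The paper divides by $p_0 f$ so that the right-hand side is the constant $-1$ while the left-hand side is $o(1)$; you divide only by $f$ and compare degrees directly, but this is a purely cosmetic difference.
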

\begin{proof}
	We suppose the contrary that $f$ is a non-trivial meromorphic solution to \eqref{F7} having order $\sigma<\frac{1}{2}$.\ \ Now $p_0f\not\equiv 0$, so we may substitute $f$ into \eqref{F7} and divide both sides by $p_0f$ to get
	\begin{align}
		\label{F8} \frac{p_n}{p_0}\frac{\D_\W^n f}{f} + \frac{p_{n-1}}{p_0}\frac{\D_\W^{n-1} f}{f} + \cdots + \frac{p_1}{p_0}\frac{\D_\W f}{f} = -1.
	\end{align}
	Now for each $k\in\{1,2,\ldots,n\}$, writing $|x|=r$, we have
	\[
		\left|\frac{p_k(x)}{p_0(x)}\right| = {o(1)}
	\]
	as $r\to\infty$, and we have
	\[
		\left|\frac{(\D_\W^k f)(x)}{f(x)}\right| = {O(1)}
	\]
	as $r\to\infty$ outside an exceptional set $E_k\subset(1,\infty)$ of finite logarithmic measure, by applying Theorem~\ref{PtwiseEst} to $f$.\ \ Therefore the value taken by the left-hand side of \eqref{F8} tends to zero as $j\to\infty$ on every strictly increasing sequence $\{r_j\}_{j=1}^\infty$ outside the exceptional set $\displaystyle E=\bigcup_{k=1}^n{E_k}\subset(1,\infty)$ of finite logarithmic measure, which is a contradiction.
\end{proof}

Here is a result about the growth of meromorphic solutions to some Wilson interpolation equations {with polynomial coefficients}, which can be deduced directly using Lemma~\ref{Estimate6}.

\begin{theorem}
\label{WIE2}
Let $n$ be a positive integer, $p_0,p_1,\ldots,p_n$ be {polynomials} such that for some $l\in\{0,1,\ldots,n\}$ we have {$\deg p_l>\deg p_k$} for every $k\in\{0,1,\ldots,n\}\setminus\{l\}$, and $f$ be a non-trivial meromorphic solution to the Wilson interpolation equation
\begin{align}
	\label{F9} p_n(x)y(x^{+(n)}) + p_{n-1}(x)y(x^{+(n-1)}) + \cdots + p_1(x)y(x^+) + p_0(x)y(x) = 0.
\end{align}
Then $f$ has order $\sigma\ge\frac{1}{2}$.
\end{theorem}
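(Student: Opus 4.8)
The plan is to argue by contradiction, following the same overall strategy as the proof of Theorem~\ref{WIE1}, but with one essential change: the integrated (proximity function) estimates are replaced by the \emph{pointwise} estimate of Lemma~\ref{Estimate6}. This replacement is forced on us here. Since the coefficients $p_0,\ldots,p_n$ are polynomials, the proximity functions $m(r,p_k)$ are only $O(\ln r)$, so the argument of Theorem~\ref{WIE1} would produce the vacuous inequality $O(\ln r)\le O(r^{\sigma-\frac12+\varepsilon})+O(\ln r)$ and no contradiction; on the other hand, for polynomials one has the pointwise decay $|p_k(x)/p_l(x)|=O(r^{\deg p_k-\deg p_l})=o(1)$ as $r=|x|\to\infty$, \emph{uniformly in} $\arg x$, which is exactly what a pointwise bound on the logarithmic Wilson difference can exploit. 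In particular Theorem~\ref{WIE2} is genuinely not subsumed by Theorem~\ref{WIE1}, since all polynomials have order $0$ and the hypothesis $\sigma_l>\max_{k\ne l}\sigma_k$ would read $0>0$.

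So I would suppose, for contradiction, that $f$ is a non-trivial meromorphic solution of \eqref{F9} of order $\sigma<\frac12$, and fix $\varepsilon$ with $0<\varepsilon<\frac12-\sigma$, so that $r^{\sigma-\frac12+\varepsilon}\to0$ as $r\to\infty$. Since $p_lf\not\equiv0$, substituting $f$ into \eqref{F9} and dividing by $p_l(x)f(x^{+(l)})$ gives the meromorphic identity
\[
	\sum_{k \in \{0,1,\ldots,n\} \setminus \{l\}} \frac{p_k(x)}{p_l(x)} \cdot \frac{f(x^{+(k)})}{f(x^{+(l)})} = -1 .
\]
Then I would estimate the two kinds of factors separately. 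For the coefficient ratios, $\deg p_l>\deg p_k$ gives $|p_k(x)/p_l(x)|=o(1)$ as $r=|x|\to\infty$. For the shift ratios, I would write $f(x^{+(k)})/f(x^{+(l)})=\big(f(x^{+(k)})/f(x)\big)\big(f(x)/f(x^{+(l)})\big)$ and apply Lemma~\ref{Estimate6}, together with its $m$-shift analogue: the Poisson--Jensen argument of Lemma~\ref{Estimate6} goes through verbatim when the half-shift $\frac{i}{2}$ is replaced by $\frac{mi}{2}$, in accordance with the remark in \S\ref{sec:DW} on Wilson operators of other shifts $\frac{c}{2}$. This yields, for each $k\ne l$, a set $E_k\subset(1,\infty)$ of finite logarithmic measure with $|f(x^{+(k)})/f(x^{+(l)})|\le e^{2r^{\sigma-\frac12+\varepsilon}}$ whenever $|x|=r\notin E_k\cup[0,1]$; with our choice of $\varepsilon$ the exponent tends to $0$, so these ratios are in fact bounded (indeed tend to $1$) along $r\to\infty$ outside $E_k$.

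Putting $E:=\bigcup_{k\ne l}E_k$, still of finite logarithmic measure, the displayed identity gives
\[
	1 \le \sum_{k \ne l} \left|\frac{p_k(x)}{p_l(x)}\right| \cdot \left|\frac{f(x^{+(k)})}{f(x^{+(l)})}\right| = o(1)
\]
as $r=|x|\to\infty$ with $r\notin E\cup[0,1]$. Since $E$ has finite logarithmic measure it cannot contain a neighbourhood of $+\infty$, so there is a sequence $r_j\to\infty$ avoiding $E\cup[0,1]$, along which the right-hand side tends to $0$ — a contradiction. Hence $\sigma\ge\frac12$. The only point requiring genuine care, and the one I would flag as the main (though minor) technical step, is the control of the exceptional set under the iterated shift $x\mapsto x^{+(k)}$: naively composing the one-step estimates of Lemma~\ref{Estimate6} would require the intermediate moduli $|x^{+(j)}|=r+O(\sqrt r)$ to avoid the exceptional set as well, which $|x|\notin E$ does not guarantee; invoking the $m$-shift form of Lemma~\ref{Estimate6} directly sidesteps this entirely.
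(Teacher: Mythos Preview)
Your proposal is correct and follows essentially the same approach as the paper's proof: assume $\sigma<\tfrac12$, divide \eqref{F9} by $p_l(x)f(x^{+(l)})$, use Lemma~\ref{Estimate6} to bound each ratio $|f(x^{+(k)})/f(x^{+(l)})|$ by $O(1)$ off a set of finite logarithmic measure, and combine with $|p_k/p_l|=o(1)$ to reach a contradiction. Your explicit discussion of the iterated-shift issue (and its resolution via the $m$-shift form of Lemma~\ref{Estimate6}) is a point the paper glosses over with a direct appeal to Lemma~\ref{Estimate6}, so your write-up is in fact slightly more careful on that step.
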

\begin{proof}
	We suppose on the contrary that $f$ is a non-trivial meromorphic solution to \eqref{F9} having order $\sigma<\frac{1}{2}$.\ \ Now we have $p_l\not\equiv 0$ and $f\not\equiv 0$, so we may substitute $f$ into \eqref{F9} and divide both sides by $p_l(x)f(x^{+(l)})$ to get
	\begin{align}
		\label{F10} \frac{p_n(x)}{p_l(x)}\frac{f(x^{+(n)})}{f(x^{+(l)})} + \cdots + \frac{p_l(x)}{p_l(x)} + \cdots + \frac{p_1(x)}{p_l(x)}\frac{f(x^{+})}{f(x^{+(l)})} + \frac{p_0(x)}{p_l(x)}\frac{f(x)}{f(x^{+(l)})} = 0.
	\end{align}
	Since $\sigma<\frac{1}{2}$, we take $\varepsilon>0$ such that $\varepsilon<\frac{1}{2}-\sigma$.\ \ If we write $|x|=r$, then Lemma~\ref{Estimate6} implies that for each $k\in\{0,1,\ldots,n\}\setminus\{l\}$, we have
	\begin{align*}
		\left|\frac{f(x^{+(k)})}{f(x^{+(l)})}\right| &\le e^{r^{\sigma-\frac{1}{2}+\varepsilon}} = O(1)
	\end{align*}
	as $r\to\infty$ outside an exceptional set $E_k\subset(1,\infty)$ of finite logarithmic measure.\ \ Applying this to \eqref{F10}, we have
	\begin{align*}
		1 = \left|\frac{p_l(x)}{p_l(x)}\right| \le \sum_{k\in\{0,1,\ldots,n\}\setminus\{l\}}{\left|\frac{p_k(x)}{p_l(x)}\right|\left|\frac{f(x^{+(k)})}{f(x^{+(l)})}\right|} \le O(1)\sum_{k\in\{0,1,\ldots,n\}\setminus\{l\}}{\left|\frac{p_k(x)}{p_l(x)}\right|}
	\end{align*}
	as $r\to\infty$ outside the exceptional set $\displaystyle E=\bigcup_{k=1}^n{E_k}\subset(1,\infty)$ of finite logarithmic measure, which is a contradiction to the assumption that {$\deg p_l>\deg p_k$} for every $k\in\{0,1,\ldots,n\}\setminus\{l\}$.
\end{proof}

\section{Discussion}

The Wilson operator $\D_\W$ introduced in the previous sections acts on $f(x)$ by shifting the square roots of the complex variable $x$ by $\frac{i}{2}$.\ \ As a final remark, we note that this operator can actually be generalized to take other non-zero square-root shifts.

\begin{definition}
	Given each $c\in\mathbb{C}\setminus\{0\}$, we let $\sqrt{\cdot}$ be a branch of the complex square-root with the line joining $c$ and $0$ as the branch cut.\ \ Then for each $x\in\mathbb{C}$ we denote
	\[
		x^+ := \left(\sqrt{x}+\frac{c}{2}\right)^2 \hspace{30px} \mbox{and} \hspace{30px} x^- := \left(\sqrt{x}-\frac{c}{2}\right)^2,
	\]
	and define the \textbf{\textit{$\boldsymbol{c}$-shift Wilson operator}} $\D_{\W,c}$ by
	\[
		(\D_{\W,c} f)(x) := \frac{f(x^+)-f(x^-)}{x^+ - x^-} = \frac{f((\sqrt{x}+\frac{c}{2})^2)-f((\sqrt{x}-\frac{c}{2})^2)}{2c\sqrt{x}}.
	\]
\end{definition}

All the results for $\D_\W\equiv\D_{\W,i}$ in this paper still hold for $\D_{\W,c}$. \\

It is known that the Askey-Wilson operator $\D_q$ is defined for every $q\in D(0;1)\setminus\{0\}$, and reduces to the ordinary differential operator $\frac{d}{dx}$ as $q\to 1$ from the interior of $D(0;1)$.\ \ In fact, the $c$-shift Wilson operator $\D_{\W,c}$ behaves in a similar way.\ \ It can be easily shown that $\D_{\W,c}$ reduces to the usual differential operator as $c\to 0$.

\begin{proposition}
	Let $x\in\mathbb{C}$ and $f$ be a function holomorphic at $x$.\ \ Then
	\[
		\lim_{c\to 0}{(\D_{\W,c} f)(x)} = f'(x).
	\]
\end{proposition}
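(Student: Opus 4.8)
The plan is to recast the two-point Wilson difference as a symmetric difference quotient of the single holomorphic function $F(w):=f(w^2)$, and to treat the origin separately because there the two shifted points $x^+$ and $x^-$ collide. For $x\neq 0$, pick any value $z$ of $\sqrt{x}$; the outcome will not depend on this choice (cf. the corresponding remark for $\D_\W$ in \S\ref{sec:DW}). Since $f$ is holomorphic near $x=z^2$, the function $F(w):=f(w^2)$ is holomorphic near $z$, with $F'(w)=2wf'(w^2)$ by the chain rule. From $x^\pm=(z\pm\tfrac c2)^2$ we get $x^+-x^-=2cz$, hence
\[
	(\D_{\W,c}f)(x)=\frac{F\!\left(z+\tfrac c2\right)-F\!\left(z-\tfrac c2\right)}{2cz}=\frac{1}{2z}\cdot\frac{F\!\left(z+\tfrac c2\right)-F\!\left(z-\tfrac c2\right)}{c}.
\]
As $c\to 0$ both $z\pm\tfrac c2$ lie in the domain of holomorphy of $F$, and differentiability of $F$ at $z$ gives $\tfrac1c\big[F(z+\tfrac c2)-F(z-\tfrac c2)\big]\to F'(z)$; therefore $(\D_{\W,c}f)(x)\to\tfrac{F'(z)}{2z}=f'(z^2)=f'(x)$.

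For $x=0$ one has $x^+=x^-=\tfrac{c^2}{4}$, so the quotient defining $\D_{\W,c}f$ is $0/0$, and one must instead use the value at $0$ obtained by continuous extension, exactly as for $\D_\W$ in \S\ref{sec:DW}: $(\D_{\W,c}f)(0):=\lim_{x\to 0}(\D_{\W,c}f)(x)$. Running the previous computation (or a short Taylor expansion of $z\mapsto f((z+\tfrac c2)^2)-f((z-\tfrac c2)^2)$ about $z=0$) gives $(\D_{\W,c}f)(0)=f'\!\big(\tfrac{c^2}{4}\big)$, which converges to $f'(0)$ as $c\to 0$ since $f'$ is continuous. Together with the case $x\neq 0$ this proves the proposition.

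The computation is essentially routine once the substitution $F(w)=f(w^2)$ is in place; the only genuine subtlety --- and the step I would be most careful about --- is the point $x=0$, where $x^+$ and $x^-$ merge, so one cannot plug into the difference-quotient formula directly and must invoke the limiting definition of $(\D_{\W,c}f)(0)$.
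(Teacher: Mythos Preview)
Your proof is correct. The paper does not actually supply a proof of this proposition---it is stated as an easy consequence of the definition---so there is no argument to compare against; your substitution $F(w)=f(w^2)$, reducing $(\D_{\W,c}f)(x)$ to a symmetric difference quotient of $F$, together with the separate treatment of $x=0$ via the limiting value $(\D_{\W,c}f)(0)=f'(c^2/4)$, is a clean and complete justification.
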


\bigskip
In this paper, we have developed a full-fledged Nevanlinna's value distribution theory of the Wilson divided-difference operator for finite order meromorphic functions.\ \ Key concepts and results including a lemma on logarithmic Wilson difference have been established for meromorphic functions of finite order.\ \ The finite-order restriction is generally the best possible. We have established a second fundamental theorem with respect to the Wilson operator, from which a new difference type little Picard theorem for meromorphic functions of finite order is derived.\ \ The Wilson-type Nevanlinna defect relations that follow from the analogous classical Nevanlinna formalism require a different way of counting zeros, poles and their multiplicities of  meromorphic functions, which is natural with respect to the Wilson operator.\ \ This new way of counting is the key to the new Wilson Nevanlinna theory. Halburd and Korhonen \cite{Halburd-Korhonen} were the first to observe new counting functions for the usual difference operator.\ \ The Nevanlinna theories for the Askey-Wilson operator \cite{Chiang-Feng3} as well as the Wilson operator established in this paper indicate that there are corresponding versions of residue calculus for these divided-difference operators, which may provide natural ways to better understand the corresponding special functions.\ \ These issues, as well as other function theoretic and interpolation-related investigations, will be discussed in subsequent papers.
\bigskip

\noindent \textbf{Acknowledgement.}\ \ We would like to thank the anonymous referee for his/her helpful and constructive comments and bringing Korhonen's work \cite{Korhonen} to our attention.\ \ We would also like to thank Dr. T. K. Lam for many valuable discussions throughout this research. \\

\bibliographystyle{amsplain}

\end{document}